\newcommand{\noun}[1]{\textsc{#1}}
\providecommand{\tabularnewline}{\\}
\numberwithin{equation}{section}
\numberwithin{figure}{section}
  \theoremstyle{remark}
  \newtheorem*{rem*}{\protect\remarkname}
\theoremstyle{plain}
\newtheorem{thm}{\protect\theoremname}[section]
  \theoremstyle{remark}
  \newtheorem{rem}[thm]{\protect\remarkname}
  \theoremstyle{plain}
  \newtheorem*{thm*}{\protect\theoremname}
 \newcommand\thmsname{\protect\theoremname}
 \newcommand\nm@thmtype{theorem}
 \theoremstyle{plain}
 \newenvironment{namedthm}[1][Undefined Theorem Name]{
   \ifx{#1}{Undefined Theorem Name}\renewcommand\nm@thmtype{theorem*}
   \else\renewcommand\thmsname{#1}\renewcommand\nm@thmtype{namedtheorem}
   \fi
   \begin{\nm@thmtype}}
   {\end{\nm@thmtype}}
  \theoremstyle{definition}
  \newtheorem{defn}[thm]{\protect\definitionname}
  \theoremstyle{plain}
  \newtheorem{prop}[thm]{\protect\propositionname}
  \theoremstyle{plain}
  \newtheorem{lem}[thm]{\protect\lemmaname}
  \theoremstyle{plain}
  \newtheorem{cor}[thm]{\protect\corollaryname}
  \providecommand{\corollaryname}{Corollary}
  \providecommand{\definitionname}{Definition}
  \providecommand{\lemmaname}{Lemma}
  \providecommand{\propositionname}{Proposition}
  \providecommand{\remarkname}{Remark}
  \providecommand{\theoremname}{Theorem}
\providecommand{\theoremname}{Theorem}
\begin{document}
\global\long\def\tx#1{\textrm{#1}}
\global\long\def\ww#1{\mathbb{#1}}
\global\long\def\pp#1{\frac{\partial}{\partial#1}}
\global\long\def\dd#1{\tx d#1}
\global\long\def\germ#1{\ww C\left\{  #1\right\}  }
\global\long\def\pol#1#2{\ww C\left[#1\right]_{#2}}
\newcommandx\polg[1][usedefault, addprefix=\global, 1=y]{\frak{N}\left\{  x,#1\right\}  }
\global\long\def\flow#1#2#3{\Phi_{#1}^{#2}#3}
\global\long\def\per#1{\mathcal{T}_{#1}}
\global\long\def\sec#1{\mathcal{S}_{#1}}
\global\long\def\nf#1#2{\nicefrac{#1}{#2}}
\global\long\def\zsk{\nf{\ww Z}{k\ww Z}}
\global\long\def\norm#1#2{\left|\left|#1\right|\right|_{#2}}
\global\long\def\re#1{\Re\left(#1\right)}
\global\long\def\im#1{\Im\left(#1\right)}
\global\long\def\frml#1{\ww C\left[\left[#1\right]\right]}
\global\long\def\hol#1{\mathfrak{h}_{#1}}
\global\long\def\id{\tx{Id}}
\global\long\def\ii{\tx i}
\global\long\def\hot{\tx{h.o.t.}}

\title{Analytic normal forms for convergent saddle-node vector fields{*}}

\author{Reinhard SCHÄFKE, Loïc TEYSSIER}

\thanks{The work of the first author was supported in part by grants of the
French National Research Agency (ref. ANR-10-BLAN 0102 and ANR-11-BS01-0009).}

\date{July 2013\\
{*}Preprint}

\address{Laboratoire I.R.M.A. (Université de Strasbourg \& C.N.R.S.)}

\email{\texttt{schaefke@math.unistra.fr} --\&-- \texttt{teyssier@math.unistra.fr}}
\begin{abstract}
We give unique analytic <<normal forms>> for germs of a holomorphic
vector field of the complex plane in the neighborhood of an isolated
singularity of saddle-node type having a convergent formal separatrix.
We specifically address the problem of computing the normal form explicitly.
\end{abstract}
\maketitle

\section{Introduction and statement of the results}

The general question of finding the <<simplest>> form of a dynamical
system through changes preserving its qualitative properties is central
in the theory. A simpler form often means a better understanding of
the behavior of the system. This article is concerned with finding
<<simple>> models of holomorphic dynamical system given by the flow
of a vector field near a singularity of convergent saddle-node kind%
\footnote{All relevant definitions will be given in due time in the course of
the introduction.%
} in $\ww C^{2}$. We use the technical term <<normal form>> for
vector fields brought into these forms, although the latter do not
satisfy the properties usually required in the normal form theory.
While this imprecision in the language may be confusing its usage
is nonetheless becoming more and more spread to refer to <<simple>>
forms which are essentially unique (say, up to the action of a finite
dimensional space).

It is possible to attach to a vector field%
\footnote{As usual we identify vector fields in $\ww C^{2}$ given as vector
valued functions $\left[\begin{array}{c}
A\\
B
\end{array}\right]$ with the Lie (directional) derivative $A\pp x+B\pp y$ acting on
functions or power series $F$ by
\begin{eqnarray*}
\left(A\pp x+B\pp y\right)\cdot F & := & A\frac{\partial F}{\partial x}+B\frac{\partial F}{\partial y}\,.
\end{eqnarray*}
} $Z:=A\pp x+B\pp y$ two dynamical systems: the one induced by the
flow itself, and the one related to the underlying foliation. The
objects of study in the former case are the integral curves of $Z$
and their natural parametrization by the flow, that is the solutions
to the differential system
\begin{eqnarray*}
\begin{cases}
\dot{x}\left(t\right) & =A\left(x\left(t\right),y\left(t\right)\right)\\
\dot{y}\left(t\right) & =B\left(x\left(t\right),y\left(t\right)\right)
\end{cases} &  & ,
\end{eqnarray*}
 whereas in the latter case only their images are of interest: the
leaves of the foliation $\mathcal{F}_{Z}$ are the images of the integral
curves regardless of how they are parametrized. They correspond to
the graphs of the solutions of the ordinary differential equation
\begin{eqnarray*}
A\left(x,y\left(x\right)\right)y'\left(x\right) & = & B\left(x,y\left(x\right)\right)\,.
\end{eqnarray*}
 Therefore two vector fields induce the same foliation when they differ
by multiplication with a non-vanishing function. 

\bigskip{}

Being given a (germ of a) holomorphic vector field we want to simplify
its components using local analytic changes of coordinates. In a first
step, one tries to simplify the vector field as much as possible using
formal power series. In the case of saddle-node vector fields this
process leads to polynomial formal normal forms~\cite{Bruno,Dulac}.
Yet this formal approach does not preserve the dynamics. Nevertheless
analyzing the divergence of these formal transforms provides a lot
of information about the dynamics or the solvability (in some differential
field) of the system. The theory of summability was successfully used
to perform this task\noun{~}\cite{MaRa-SN}, yielding a complete
set of functional invariants to classify the foliation. However their
construction did not directly yield normal forms. Some years later
the complete modulus of classification for saddle-node vector fields
was given in~\cite{MeshVoro,Tey-SN} by appending to the orbital
modulus another functional invariant. Still no normal form was proposed.

In~\cite{Tey-ExSN}, a <<first order>> form allowed one to decide
in some cases whether two vector field are (orbitally or not) conjugate,
but the given form was far from being unique. At the same time \noun{F.~Loray~}\cite{Loray}
performed a geometric construction providing normal forms in some
cases, generalizing the ones stated by \noun{J.~Écalle} in~\cite{Ecal}.
In this article we present a generalization of the approach of~\cite{Tey-ExSN}
and provide normal forms in every case. Moreover, we prove that this
method is constructive or, at least, computable numerically and in
some cases symbolically. These results are related to recent works
of \noun{O.~Bouillot}~\cite{BouillonThese,Bouillon}.

\subsection{Statement of the results}

~

\begin{wrapfigure}{o}{4.5cm}%
\includegraphics[width=4cm]{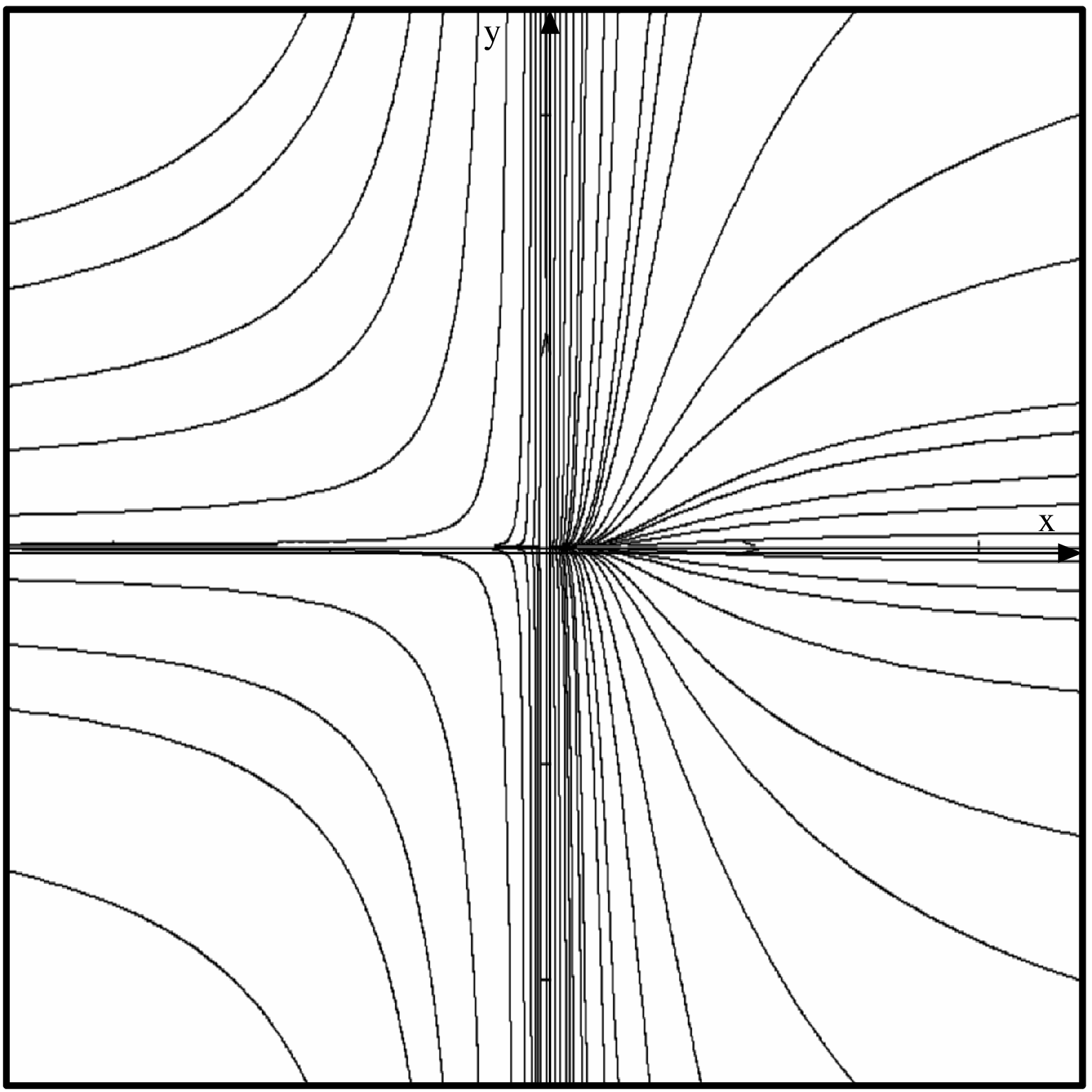}

Real trajectories of \\
$x^{2}\pp x+y\pp y$\end{wrapfigure}%
 A \textbf{saddle-node} vector field is a germ of a holomorphic vector
field $Z$ near some isolated singularity, which we conveniently locate
at $\left(0,0\right)$, such that the linear part has two eigenvalues,
exactly one of which is nonzero. In other words we require that $A\left(0,0\right)=B\left(0,0\right)=0$
is locally the only common zero of the components, and that the spectrum
of the matrix $\left[\nabla A\left(0,0\right),\nabla B\left(0,0\right)\right]$
is $\left\{ 0,\lambda_{2}\right\} $ with $\lambda_{2}\neq0$. Generically
there exists only one leaf with analytic closure at $\left(0,0\right)$,
tangent to the eigenspace of $\lambda_{2}$, and a formal one tangent
to the other eigenspace. When this formal separatrix is a convergent
power series we say that $Z$ is \textbf{convergent}, and divergent
otherwise. 

Regardless of the convergence or not of the formal separatrix, the
vector field $Z$ is always formally conjugate to one of the formal
normal form 
\begin{eqnarray*}
P\left(x\right)\left(x^{k+1}\pp x+y\left(1+\mu x^{k}\right)\pp y\right)
\end{eqnarray*}
where $k\in\ww N$ is a positive integer, $\mu\in\ww C$ and $P$
is a polynomial of degree at most $k$ with $P\left(0\right)=\lambda_{2}$.
\emph{Throughout the article, we fix a positive integer $k$}. This
form is unique up to linear changes of variables $x\mapsto\alpha x$
with $\alpha^{k}=1$, acting on $P$ by right composition. The complex
number $\mu$ is the \textbf{formal orbital modulus~}\cite{Dulac}
while (the class of) $P$ is the \textbf{formal temporal modulus~}\cite{Bruno}. 
\begin{rem*}
Recall that two vector fields $Z$ and $\tilde{Z}$ are called (formally,
analytically) \textbf{conjugate} when there exists a (formal, analytic)
change of variables $\Psi$ such that $\Psi^{*}Z=\tilde{Z}$, while
they are (formally, analytically) \textbf{orbitally conjugate} when
$\Psi^{*}Z$ induces the same foliation as $\tilde{Z}$, which we
write $\Psi^{*}\mathcal{F}_{Z}=\mathcal{F}_{\tilde{Z}}$. In order
to determine the vector field $\tilde{Z}$ obtained by changing the
coordinates in $Z$ by some transformation $\Psi$, \emph{i.e.} $\tilde{Z}=\Psi^{*}Z$,
one can use the relation
\begin{eqnarray*}
\tilde{Z}\cdot\Psi & = & Z\circ\Psi\,,
\end{eqnarray*}
where $\tilde{Z}\cdot$ denotes the Lie derivative%
\footnote{The Lie derivative acts component-wise on vectors.%
} associated to $\tilde{Z}$. 
\end{rem*}
The complete orbital analytical classification of saddle-node vector
fields is due to \noun{J.~Martinet} and \noun{J.-P.~Ramis~}\cite{MaRa-SN}.
In the case $k=1$ the temporal classification was obtained by \noun{Y.~Mershcheryakova}
and \noun{S.~Voronin}~\cite{MeshVoro} and at the same time by \noun{L.~Teyssier~}\cite{Tey-SN}
in the general case. The corresponding \textbf{classification modulus}
of a convergent vector $Z$ is a $\left(2+2k\right)$\textendash{}tuple
\begin{eqnarray*}
\mathcal{M}\left(Z\right) & := & \left(\mu,P\right)\oplus\left(\varphi^{j},f^{j}\right)_{j\in\zsk}
\end{eqnarray*}
where $\varphi^{j},\, f^{j}\in h\germ h$. The pair $\left(\mu,P\right)$
is the formal modulus, as explained above. The way how $Z$ relates
to both its \textbf{orbital modulus} $\left(\varphi^{j}\right)_{j\in\zsk}$
and its \textbf{temporal modulus} $\left(f^{j}\right)_{j\in\zsk}$
is described further down. 

Any element of
\begin{eqnarray*}
{\tt Mod}_{k} & := & \ww C\times\pol x{\leq k}\oplus\left(h\germ h\right)^{2k}\\
 & = & \left\{ \left(\mu,P\right)\oplus\left(\varphi^{j},f^{j}\right)_{j\in\zsk}\right\} 
\end{eqnarray*}
is the modulus of some saddle-node vector field. In this paper we
are mainly concerned with giving a «constructive» proof of this property.
We intend to generalize our approach to divergent saddle-node vector
fields in an upcoming work dealing with the analogous problem for
resonant-saddles.
\begin{rem}
\label{rem_eqv_mod}There exists a natural action of 
\begin{eqnarray*}
{\tt Aut}_{k} & := & \zsk\times\ww C_{\neq0}\\
 & = & \left\{ \left(\theta,c\right)\right\} 
\end{eqnarray*}
 on ${\tt Mod}_{k}$ by cyclic permutation $j\mapsto j+\theta$ of
the indexes and right-composition by the linear maps $\left(x,h\right)\mapsto\left(e^{\nf{2\ii\pi\theta}k}x,ch\right)$.
The actual moduli space is the quotient $\nf{{\tt Mod}_{k}}{{\tt Aut}_{k}}$,
in the sense that $Z$ and $\tilde{Z}$ are locally analytically conjugate
if, and only if, $\mathcal{M}\left(Z\right)$ and $\mathcal{M}\left(\tilde{Z}\right)$
are conjugate by the action of ${\tt Aut}_{k}$.
\end{rem}
The starting point of this article is the following result due to
\noun{F.~Loray}:
\begin{thm*}
\cite{Loray} Any convergent saddle-node vector field with formal
modulus $k=1$ is orbitally conjugate to a vector field of the form
\begin{eqnarray*}
x^{2}\pp x+y\left(1+\mu x+xR\left(x^{\sigma}y\right)\right)\pp y
\end{eqnarray*}
where $R$ is a germ of a holomorphic function vanishing at the origin,
and $\sigma$ is defined by
\begin{eqnarray*}
\begin{cases}
\sigma:=0 & \mbox{if }\mu\notin\ww R_{\leq0}\\
\sigma:=\left\lfloor -\mu\right\rfloor +1 & \mbox{otherwise}
\end{cases} &  & .
\end{eqnarray*}
The germ $R$ is unique up to the action of $\ww C_{\neq0}$ through
linear changes of coordinates $y\mapsto cy$.
\end{thm*}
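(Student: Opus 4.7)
My approach leverages the Martinet--Ramis orbital classification~\cite{MaRa-SN}: two convergent saddle-node foliations with the same formal orbital modulus $\mu$ are analytically orbitally conjugate if and only if their full orbital invariants coincide. For $k=1$ this invariant reduces to a single germ $\varphi^{0}\in h\germ h$, modulo the action described in Remark~\ref{rem_eqv_mod}. The theorem then boils down to proving that the assignment $R\mapsto\varphi^{0}_{X_{R}}$, where $X_{R}:=x^{2}\pp x + y(1+\mu x+xR(x^{\sigma}y))\pp y$, is a bijection from $h\germ h$ onto $h\germ h$, modulo the scaling $y\mapsto cy$. Granting this, any convergent saddle-node $Z$ with modulus $(\mu,\varphi^{0})$ is matched with the unique $R$ realizing $\varphi^{0}$, and Martinet--Ramis furnishes an orbital conjugacy between $Z$ and $X_{R}$.

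\textbf{Preparation and the role of $\sigma$.} Using the convergence of the formal weak separatrix, one straightens it to $\left\{y=0\right\}$; the strong separatrix is likewise straightened to $\left\{x=0\right\}$; after dividing by a non-vanishing unit (orbital conjugacy) $Z$ takes the shape
\[
x^{2}\pp x \;+\; y\bigl(1+\mu x + xN(x,y)\bigr)\pp y
\]
with $N\in\germ{x,y}$ and $N(x,0)=0$. The integer $\sigma$ is chosen so that $x^{\sigma}y$ is single-valued and stays uniformly small on a sectorial neighborhood of the weak separatrix of the formal model $X_{0}=x^{2}\pp x + y(1+\mu x)\pp y$: along a leaf of $X_{0}$ one has $y = Cx^{\mu}e^{-1/x}$, so $x^{\sigma}y = Cx^{\mu+\sigma}e^{-1/x}$, and the dichotomy in the definition of $\sigma$ is exactly what makes this quantity tame on the Stokes gates of $X_{0}$, ensuring that $R(x^{\sigma}y)$ is a legitimate holomorphic perturbation.

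\textbf{Bijection $R\leftrightarrow\varphi^{0}$ and main obstacle.} For each $R\in h\germ h$ I would construct the Hukuhara--Kimura--Matuda sectorial normalizations $\Psi^{\pm}_{R}$ of $X_{R}$ to $X_{0}$ on the two standard sectors covering a punctured neighborhood of $\left\{x=0\right\}$; their mismatch on the central gate yields $\varphi^{0}_{X_{R}}$, which one expresses as an explicit functional of $R$. Surjectivity is then obtained by inverting this functional equation: its dominant linear part is essentially a Borel--Laplace transform on $h\germ h$, hence an isomorphism, and an implicit function argument in an appropriate Banach algebra produces the desired $R$. Injectivity up to $y\mapsto cy$ reflects that the orbital self-conjugacies of $X_{R}$ reduce to the ${\tt Aut}_{k}$-action. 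The main obstacle is precisely this inversion: both $R$ and $\varphi^{0}$ live in $h\germ h$, so a bijection is plausible on ``dimension'' grounds, but establishing it analytically requires uniform asymptotic control of $\Psi^{\pm}_{R}$ as $R$ varies, within a scale of Banach algebras for which the linearized modulus map is a continuous isomorphism---and the precise cut-off defining $\sigma$ is exactly what guarantees that $xR(x^{\sigma}y)$ remains uniformly small on the Stokes sectors, which is the technical condition making the fixed-point scheme close.
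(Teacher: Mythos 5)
Your plan is a legitimate reading of the Martinet--Ramis framework, but it diverges from the paper's own route and, more importantly, it stops exactly at the decisive technical step. The paper does not invert the map $R\mapsto\varphi^{0}_{X_{R}}$ by an implicit-function argument. Instead it directly manufactures sectorial first-integrals $H^{j}_{N}$ with the prescribed transition maps $H_{N}^{j+1}=H_{N}^{j}\exp\left(\frac{2\ii\pi\mu}{k}+\varphi^{j}\left(H_{N}^{j}\right)\right)$ by iterating a refined Cauchy--Heine transform (Theorem~\ref{thm:Cauchy-Heine}, Proposition~\ref{prop:realization_transition_maps}): the sequence $N_{n+1}=\Sigma\left(N_{n},\varphi\right)$ converges by a Montel compactness argument (Lemma~\ref{lem:weak_bounded_conv}). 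From the limit $N$ one reads off sectorial vector fields $X^{j}$ admitting $H^{j}_{N}$ as first integral; they glue by Riemann's theorem on removable singularities, and the estimates force $xR=O\left(x^{k}\right)$, so $R$ is polynomial in $x$ of degree $<k$ (in particular independent of $x$ for $k=1$, recovering Loray's form). The pay-off of this detour is that the domain of convergence of $R$, the spiraling sectors required when $\mu\notin\ww R_{>0}$ (which is exactly where $\sigma$ intervenes), and the polynomial-degree statement are controlled explicitly along the iteration, and the whole scheme is computable.

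The gap in your sketch is the clause ``an implicit function argument in an appropriate Banach algebra produces the desired $R$.'' The spaces $h\germ h$ are not Banach --- they are inductive limits of Banach balls of shrinking radius --- so an abstract implicit function theorem does not apply, and the invertibility of the linearization at $R=0$ (your Borel--Laplace observation is correct at that level: it is the Elizarov computation of the coefficients $c^{j}_{m,n}$ revisited in Section~\ref{sub:symbolic_convergent}) does not by itself control how the domains of the sectorial normalizations $\Psi^{\pm}_{R}$, of $\varphi^{0}_{X_{R}}$, and of the recovered $R$ shrink relative to one another as $R$ varies. Managing precisely this is what the Cauchy--Heine iteration with its explicit $\rho$--estimates accomplishes. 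Your injectivity clause also tacitly relies on $R\mapsto\per R\left(-xR\right)$ being one-to-one, which the paper establishes not by a soft argument but through the block-triangular structure of the period map (Corollary~\ref{cor:orbit_invertible}). The architecture of your argument is plausible, but the hardest analytic step is named rather than carried out.
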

We present a generalization of this result to the non-generic case
$k>1$, which provides orbital normal forms as well as normal forms
for vector fields.
\begin{namedthm}
[Main Theorem]\label{thm:convergent}Let $Z$ be a germ of a convergent
saddle-node vector field. Then $Z$ is analytically conjugate to a
vector field of the form:
\begin{eqnarray*}
\frac{P\left(x\right)}{1+xP\left(x\right)G\left(x,x^{\sigma}y\right)}\left(x^{k+1}\pp x+y\left(1+\mu x^{k}+xR\left(x,x^{\sigma}y\right)\right)\pp y\right)
\end{eqnarray*}
where $G\left(x,u\right)=\sum_{n>0}G_{n}\left(x\right)u^{n}$ and
$R\left(x,u\right)=\sum_{n>0}R_{n}\left(x\right)u^{n}$ are germs
of a holomorphic function, such that each $G_{n}$ and $R_{n}$ are
polynomials of degree at most $k-1$. This form is essentially unique.\end{namedthm}
\begin{rem}
\label{rem_about_sigma}~
\begin{enumerate}
\item All the results of the present paper remain valid for any choice of
$\sigma\in\ww N$ provided $\sigma+\mu\notin\ww R_{\leq0}$.
\item The normal forms presented above agree with the normal forms of \noun{J.~Écalle}
and of \noun{F.~Loray} when $k=1$, with $P:=1$ and $G:=0$.
\item The uniqueness clause of this theorem reflects the action of ${\tt Aut}_{k}$
on ${\tt Mod}_{k}$. We show in Proposition~\ref{pro:uniqueness}
and Corollary~\ref{cor:uniqueness} that two vector fields in normal
form are locally analytically conjugate if, and only if, the corresponding
triples $\left(P,G,R\right)$ are conjugate under the action of ${\tt Aut}_{k}$
by right-composition $\left(x,y\right)\mapsto\left(e^{\nf{2\ii\pi\theta}k}x,cy\right)$,
the element $\left(\theta,c\right)\in{\tt Aut}_{k}$ being the same
as the one defining the equivalence between the corresponding classification
moduli.
\end{enumerate}
\end{rem}
\bigskip{}

We mentioned earlier that our method is rather constructive. To underline
that fact we provide algorithms allowing us to prove computability
results, in the sense of the
\begin{defn}
\label{def_computability}~
\begin{enumerate}
\item We say that a number $x\in\ww R$ is \textbf{computable} if there
exists a halting Turing machine%
\footnote{We consider here Turing machines with finite alphabet and potentially
infinite memory.%
} ${\tt N}_{x}$ which inputs an integer $k$ and outputs a decimal
number $w\in10^{-k}\ww Z$ such that $\left|x-w\right|<10^{-k}$.
This definition is extended to points of $\ww R^{n}$ in the obvious
way.
\item We say that a function $f\,:\,\Omega\to\ww C^{m}$ defined on $\Omega\subset\ww C^{n}$
is \textbf{computable} if for each computable argument $\mathbf{x}\in\Omega$
the value $f\left(\mathbf{x}\right)$ is also computable in the following
sense: $f$ is uniquely determined by a halting Turing machine ${\tt F}_{f}$
which inputs ${\tt N}_{\mathbf{x}}$ and outputs ${\tt N}_{f\left(\mathbf{x}\right)}$.
\end{enumerate}
\end{defn}
\begin{rem}
\label{rem_integ_is_computable}Any path integral of a computable
function along a computable path is computable. More generally the
local integral curves of a computable vector field have a computable
parameterization.\end{rem}
\begin{namedthm}
[Computation Theorem] The modulus map $\mathcal{M}$ and the process
of reduction to normal form are explicitly%
\footnote{The word «explicitly» here means that we actually indicate a way to
do so.%
} computable, in the following sense (a formal class $\left(\mu,P\right)$
being fixed as well as the knowledge of $k$).
\begin{enumerate}
\item There exists an explicit halting Turing machine ${\tt Modulus}$ which
inputs the Turing machine ${\tt F}_{Z}$ of a computable vector field
$Z$ and outputs ${\tt F}_{\mathcal{M}\left(Z\right)}$.
\item There exists an explicit halting Turing machine ${\tt NormalForm}$
which inputs the Turing machine ${\tt F}_{M}$ of a computable modulus
$M\in{\tt Mod}_{k}$ and outputs ${\tt F}_{Z}$ where $Z$ is in the
form of the above Main Theorem.
\end{enumerate}
\end{namedthm}

\subsection{Description of the techniques and outline of the article}

~

The problem naturally splits into two very distinct tasks: find orbital
normal forms 
\begin{eqnarray*}
X_{R}\left(x,y\right) & = & x^{k+1}\pp x+y\left(1+\mu x^{k}+xR\left(x,x^{\sigma}y\right)\right)\pp y\,,
\end{eqnarray*}
\emph{i.e.} normal forms for the underlying foliation, then find temporal
normal forms $U_{G}X_{R}$, 
\begin{eqnarray*}
U_{G}\left(x,y\right) & = & \frac{P\left(x\right)}{1+xP\left(x\right)G\left(x,x^{\sigma}y\right)}\,,
\end{eqnarray*}
for a fixed foliation. The method we use here is different from the
abstract proofs given in the original papers~\cite{MaRa-SN} or~\cite{Loray}
for the orbital part, and in~\cite{MeshVoro,Tey-SN} for the temporal
part. In order to present the construction we need to say a few words
about how the moduli are related to the vector field. Before doing
so, however, we indicate how to reduce our results to the case $\sigma=0$
(that is, $\mu\notin\ww R_{\le0}$). This will notably lighten the
notations and improve the clarity of the presentation.

\subsubsection{Reduction of the proof}

Assume that the Main Theorem is valid for every $M\in{\tt Mod}_{k}$
with formal orbital modulus $\mu\notin\ww R_{\le0}$. Take $\tilde{\mu}\leq0$
and pick $\sigma\in\ww N$ such that $\mu:=\tilde{\mu}+\sigma$ is
positive. For a given $\tilde{M}\in{\tt Mod}_{k}$ with formal orbital
modulus $\tilde{\mu}$, define $M$ by replacing $\tilde{\mu}$ with
$\mu$. We transform the normal form $Z:=U_{G}X_{R}$ provided by
the Main Theorem with complete modulus $\mathcal{M}\left(Z\right)=M$
using the polynomial map 
\begin{eqnarray*}
\Psi\,:\,\ww C^{2} & \longrightarrow & \ww C^{2}\\
\left(x,y\right) & \longmapsto & \left(x,x^{\sigma}y\right)\,.
\end{eqnarray*}
This is a biholomorphism outside $\left\{ x=0\right\} $ such that
\begin{eqnarray*}
\Psi^{*}Z & = & U_{G\circ\Psi}\left(x^{k+1}\pp x+y\left(1+\tilde{\mu}x^{k}+xR\circ\Psi\right)\pp y\right)
\end{eqnarray*}
defines a germ of a holomorphic vector field $\tilde{Z}$ in normal
form with formal orbital modulus $\tilde{\mu}$. The fact that $\mathcal{M}\left(\tilde{Z}\right)=\tilde{M}$
will follow from the construction we present now, namely that the
non-formal moduli of $\mathcal{M}\left(\tilde{Z}\right)$ are completely
determined by the conformal structure of the dynamical system outside
$\left\{ x=0\right\} $. The uniqueness of the normal form follows
in the same way.

\subsubsection{The orbital modulus}

~

It is well-known that a convergent saddle-node is conjugate to some
prepared form, called Dulac's form~\cite{Dulac2} 
\begin{eqnarray}
Z\left(x,y\right) & = & u\left(x,y\right)\left(x^{k+1}\pp x+y\left(1+\mu x^{k}+x\, r\left(x,y\right)\right)\pp y\right)\label{eq:dulac}
\end{eqnarray}
with $u\left(0,0\right)\neq0$ and $r\left(x,0\right)=0$. This form
is far from being unique as $r$ and $u$ are otherwise unspecified
germs of a holomorphic function. The above vector field is orbitally
conjugate to the formal model $X_{0}$ over sector-like domains $\left(\mathcal{V}^{j}\right)_{j\in\zsk}$
by sectorial diffeomorphisms $\left(\Psi_{\tx O}^{j}\right)_{j\in\zsk}$,
see \cite{Tey-SN}. The union $\left\{ x=0\right\} \cup\bigcup_{j}\mathcal{V}^{j}$
is a neighborhood of $\left(0,0\right)$. For each $j$, there exists
a unique such conjugacy $Z=\Psi_{\tx O}^{j\ *}X_{0}$ which is tangent
to the identity and fibered in the $x$-variable. We will only use
these in the sequel:
\begin{eqnarray*}
\Psi_{\tx O}^{j}\,:\,\mathcal{V}^{j} & \longrightarrow & \ww C^{2}\\
\left(x,y\right) & \longmapsto & \left(x,y+\hot\right)\,.
\end{eqnarray*}

The formal model admits a family of sectorial first-integral with
connected fibers%
\footnote{A first-integral of $Z$ is a (perhaps multivalued) function $H$
such that $Z\cdot H=0$, which means the fibers $\left\{ H=\mbox{cst}\right\} $
are a union of integral curves of $Z$.%
}
\begin{eqnarray*}
H_{0}\left(x,y\right) & := & yx^{-\mu}\exp\frac{1}{kx^{k}}\,.
\end{eqnarray*}
We actually consider the following choices of sectorial determinations
of this (in general) multivalued function. Let $H_{0}^{0}$ denote
the determination of $H_{0}$ on\emph{ $\mathcal{V}^{0}$ }obtained
by taking the principal determination of the logarithm in $x^{-\mu}=\exp\left(-\mu\log x\right)$,
and for values of $j\in\ww Z$ set $H_{0}^{j+1}:=\exp\left(\nf{2\ii\pi\mu}k\right)H_{0}^{j}$
(computed from the analytic continuation of $H_{0}^{j}$ in $\mathcal{V}^{j+1}$).
Notice that then $H_{0}^{j}$ actually depends only on the class of
$j$ in $\zsk$. 

From this collection of sectorial functions we define a family of
(canonical) first-integral with connected fibers to the original vector
field $Z$ by letting
\begin{eqnarray*}
H^{j}:=H_{0}^{j}\circ\Psi_{\tx O}^{j} & \in & \mathcal{O}\left(\mathcal{V}^{j}\right)\,.
\end{eqnarray*}

\bigskip{}

The orbital analytic class of $Z$ is completely encoded in the way
sectorial leaves are connected over intersections of consecutive sectors
\begin{eqnarray*}
\mathcal{V}^{j,s} & := & \mathcal{V}^{j+1}\cap\mathcal{V}^{j}\,,
\end{eqnarray*}
namely we have the relation
\begin{eqnarray*}
H^{j+1} & = & \psi^{j}\circ H^{j}
\end{eqnarray*}
where $\left(\psi^{j}\right)_{j}$ is associated to the orbital modulus
$\left(\varphi^{j}\right)_{j\in\zsk}$ of $Z$ by 
\begin{eqnarray*}
\psi^{j}\left(h\right) & = & h\exp\left(\frac{2\ii\pi\mu}{k}+\varphi^{j}\left(h\right)\right)\,.
\end{eqnarray*}
Notice that the orbital modulus of $X_{0}$ is given by $\varphi^{j}=0$.

\subsubsection{Orbital normal forms: Section~\ref{sec:orbit-NF}}

~

Being given $\mu\notin\ww R_{\leq0}$ and $\left(\varphi^{j}\right)_{j\in\zsk}\in h\germ h$
we construct a collection $\left(H^{j}\right)_{j\in\zsk}$ of functions
with connected fibers such that $H^{j+1}=\psi^{j}\circ H^{j}$, holomorphic
on modified sectors $\mathcal{V}^{j}$ whose union covers $\ww C_{\neq0}\times\ww C$.
This is done by iterating a Cauchy-Heine integral transformation solving
a certain sectorial Cousin problem. The limit of the sequence obtained
in this way is a fixed-point of a certain operator between convenient
Banach spaces. We then associate to $H^{j}$ a sectorial vector field
\begin{eqnarray*}
X^{j}\left(x,y\right) & = & x^{k+1}\pp x+y\left(1+\mu x^{k}+xR^{j}\left(x,y\right)\right)\pp y
\end{eqnarray*}
such that $H^{j}$ is a first-integral of $X^{j}$. The construction
guarantees that $R^{j}$ is bounded near $\left\{ x=0\right\} $ and
$R^{j+1}=R^{j}$ on consecutive sectors. Riemann's Theorem on removable
singularities asserts that each $R^{j}$ is the restriction of a function
$R$ holomorphic on the domain
\begin{eqnarray*}
\mathcal{V}_{\rho} & := & \left\{ \left(x,y\right)\,:\,\left|y\right|<\rho\right\} 
\end{eqnarray*}
for some $\rho>0$. This means that the $X^{j}$ glue to a convergent
saddle-node vector field $X$. The growth of $R$ as $x\to\infty$
in $\mathcal{V}_{\rho}$ is also controlled by the Cauchy-Heine integral,
finally yielding that $X_{R}$ is in normal form.

\subsubsection{\label{sub:tempo_modulus}The temporal modulus}

~

From now on we assume that $Z=UX_{R}$ has a normalized orbital part
$X_{R}$. The formal normal form is then $P\, X_{R}$, where $P\left(x\right)$
is the polynomial of degree at most $k$ such that $P\left(x\right)\equiv U\left(x,0\right)\,\mbox{mod}\, x^{k+1}$.
There exist sectorial diffeomorphisms $\left(\Psi_{\tx T}^{j}\right)_{j}$
conjugating $Z$ to the vector field $PX_{R}$ (they are in particular
symmetries of the foliation induced by $X_{R}$). In a way, the temporal
modulus of $Z$ measures the obstruction to glue together the sectorial
flows of $PX_{R}$ in the intersections $\mathcal{V}^{j,s}$. This
invariant can be interpreted in terms of the period operator as the
obstruction to solve the cohomological equation 
\begin{eqnarray}
X_{R}\cdot T & = & \frac{1}{U}-\frac{1}{P}\,.\label{eq:cohomo-equa}
\end{eqnarray}
Let us explain the connection in some detail. By the method of characteristics,
any solution to the above cohomological equation must satisfy 
\begin{eqnarray*}
T\circ\gamma\left(1\right)-T\circ\gamma\left(0\right) & = & \tau\left(\frac{1}{U}-\frac{1}{P},\gamma\right)\,,
\end{eqnarray*}
 if $\gamma\,:\,\left[0,1\right]\to\left(\ww C^{2},0\right)$ is any
path tangent to $X_{R}$. Here
\begin{eqnarray*}
\tau\left(g,\gamma\right) & := & \int_{\gamma}g\left(x,y\right)\frac{\dd x}{x^{k+1}}\,.
\end{eqnarray*}
By using <<asymptotic paths>> $\gamma$, \emph{i.e.} satisfying
$\lim_{t\to0}\gamma\left(t\right)=\left(0,0\right)$, tangent to $X_{R}$,
we can solve the equation on $\mathcal{V}^{j}$ by a holomorphic function.
It follows that the cohomological equation admits a unique bounded
solution $T^{j}\in\mathcal{O}\left(\mathcal{V}^{j}\right)$ with continuous
extension to $\left\{ x=0\right\} $ such that $T^{j}\left(0,0\right)=0$.
This function provides the sectorial temporal normalization through
the relation
\[
\Psi_{\tx T}^{j}\left(x,y\right)=\flow{PX_{R}}{T^{j}\left(x,y\right)}{\left(x,y\right)}\,.
\]
This means that $\Psi_{\tx T}^{j}\left(x,y\right)$ is obtained by
replacing the time $t$ by $T^{j}\left(x,y\right)$ in the flow $\flow{PX_{R}}t{\left(x,y\right)}$
of the vector field $PX_{R}$. Since $\tau\left(\frac{1}{U},\gamma\right)$
represents the time needed to go from $\gamma\left(0\right)$ to $\gamma\left(1\right)$
following the flow of $Z$, the substitution $t=T^{j}(x,y)$ can be
naturally interpreted as a change of time in the flow of $PX_{R}$
in order to obtain that of $Z$. We refer to~\cite{Tey-SN} for details.

\bigskip{}

Now we identify the obstruction to solve (\ref{eq:cohomo-equa}) analytically
as the difference between consecutive sectorial solutions. Since this
difference is a first integral, it can be written 
\begin{eqnarray*}
T^{j+1}-T^{j} & = & f^{j}\circ H^{j}\,,
\end{eqnarray*}
where $H^{j}$ denotes the canonical first integral of $X_{R}$ on
$V^{j}$ introduced in subsection 1.2.2. The obstruction is thus located
in the value of the integral along an <<asymptotic cycle>> $\gamma_{p}^{j,s}$
passing through $p\in\mathcal{V}^{j,s}$ which is not homotopically
trivial in the leaf. We refer to Figure~\ref{fig:asy_cyc} for a
visual depiction.
\begin{defn}
\label{def_period}The function
\begin{eqnarray*}
p\in\mathcal{V}^{j,s} & \longmapsto & \tau\left(\frac{1}{U}-\frac{1}{P},\gamma_{p}^{j,s}\right)=f^{j}\circ H^{j}\left(p\right)\in\ww C
\end{eqnarray*}
 is called the \textbf{period} of $\frac{1}{U}-\frac{1}{P}$ along
$X_{R}$ on $\mathcal{V}^{j,s}$. Together, they define a linear operator
\begin{eqnarray*}
\per R\,:\,\frac{1}{U}-\frac{1}{P} & \longmapsto & \left(f^{j}\right)_{j\in\zsk}\in\left(h\germ h\right)^{k}\ .
\end{eqnarray*}
 
\end{defn}
As the asymptotic cycle is the same for all $p$ in $\mathcal{V}^{j,s}$
having the same value $h=H^{j}\left(p\right)$, we denote it by $\gamma^{j,s}\left(h\right)$.
The collection $\left(f^{j}\right)_{j}$ is the temporal modulus of
$Z$.
\begin{enumerate}
\item The sectorial solutions of $X_{R}\cdot T^{j}=g$ and the collection
$\per R\left(g\right)=(\per R^{j}\left(g\right))_{j\in\nf{\ww Z}{k\ww Z}}$
are defined in the same way for arbitrary germs in $g\in\ww C\left\{ x,y\right\} $
provided $g\left(x,0\right)\in x^{k+1}\ww C\left\{ x\right\} $. We
introduce the notation $x^{k+1}\germ x+y\germ{x,y}$ for the set of
these germs. For sufficiently small complex $h$, we have 
\[
\per R^{j}\left(g\right)\left(h\right)=\int_{\gamma^{j,s}\left(h\right)}g\left(x,y\right)\frac{\dd x}{x^{k+1}}\,
\]
 with the above asymptotic cycle in $\mathcal{V}^{j,s}$ corresponding
to the value $h$ of $H^{j}$.
\item The above considerations can be condensed in the statement that the
following sequence of vector spaces is exact
\begin{eqnarray*}
\begin{array}{ccccccccccc}
 &  &  & \mbox{const} &  & X_{R}\cdot &  & \per R\\
0 & \longrightarrow & \ww C & \longrightarrow & \germ{x,y} & \longrightarrow & x^{k+1}\germ x+y\germ{x,y} & \longrightarrow & \left(h\germ h\right)^{k} & \longrightarrow & 0
\end{array} &  & \,.
\end{eqnarray*}

\end{enumerate}
\begin{figure}
\hfill{}\includegraphics[height=5cm]{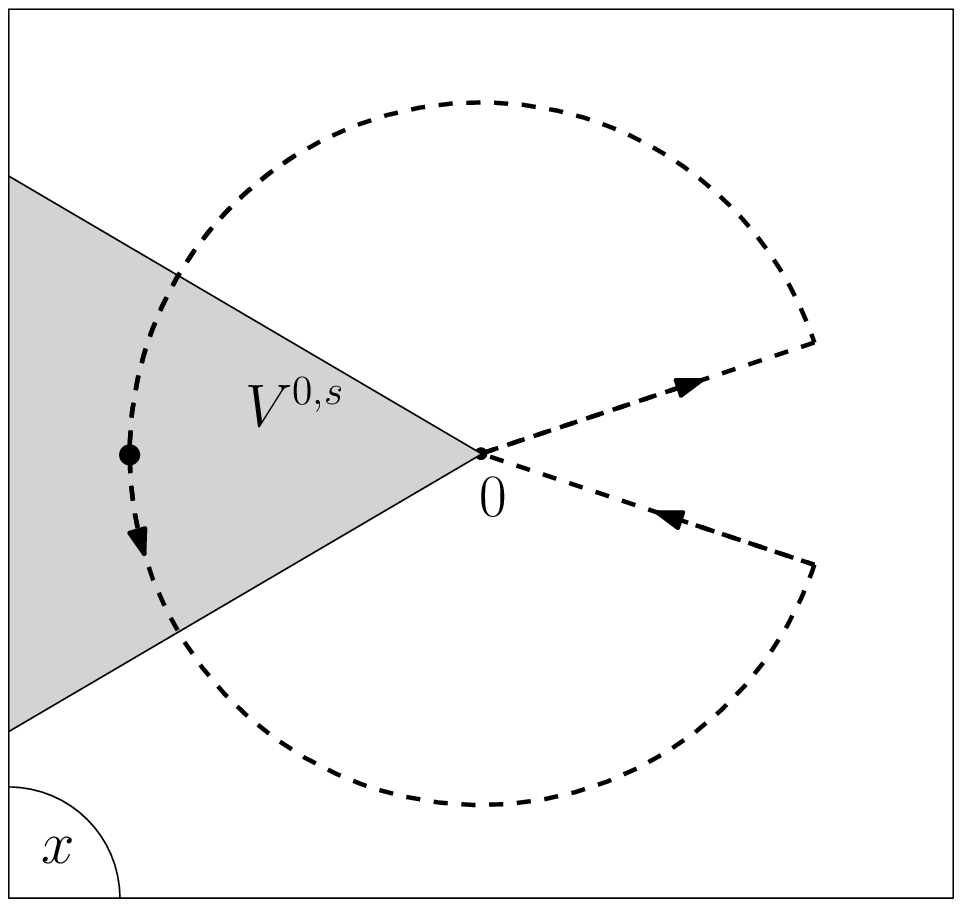}\hfill{}\includegraphics[height=5cm]{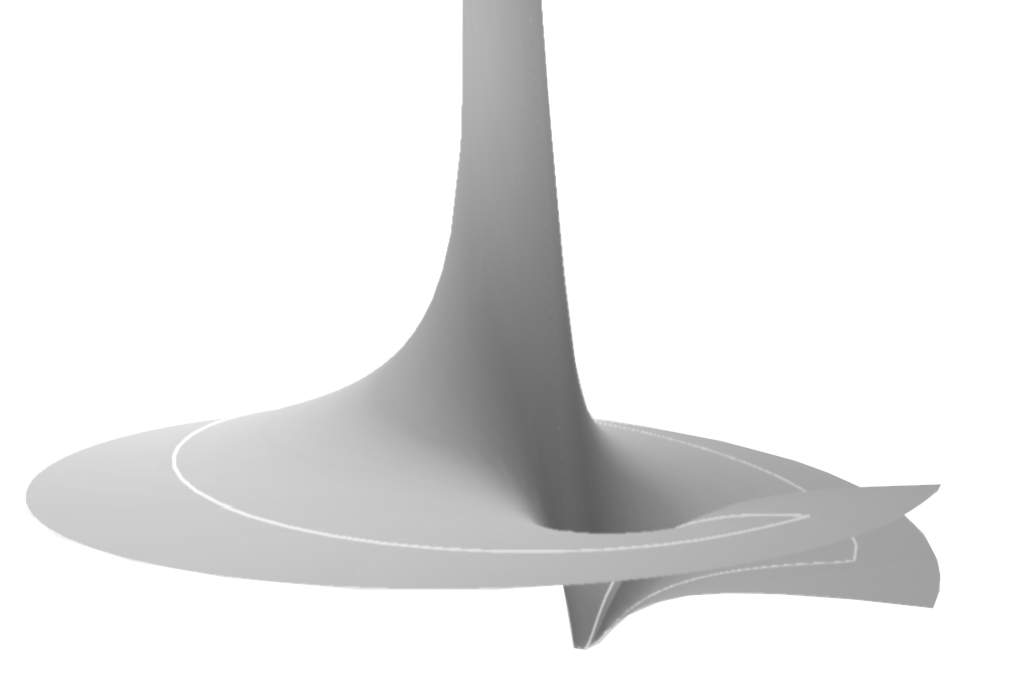}\hfill{}

\caption{\label{fig:asy_cyc}An asymptotic cycle, in projection in the $x$-variable
(left) and in the leaf (right) when $k=1$.}
\end{figure}

\subsubsection{Temporal normal forms: Section~\ref{sec:tempo-NF}}

~

Being given $\mu\notin\ww R_{\leq0}$ and $X_{R}$ from the previous
step of the construction, we consider some collection $\left(f^{j}\right)_{j\in\zsk}$.
Using again the Cauchy-Heine transformation, we obtain sectorial functions
$T^{j}\in\mathcal{O}\left(\mathcal{V}^{j}\right)$ such that $T^{j+1}-T^{j}=f^{j}\circ H^{j}$.
The construction ensures that $X_{R}\cdot T^{j+1}=X_{R}\cdot T^{j}$
and hence the functions $g^{j}=X_{R}\cdot T^{j}$ glue to a holomorphic
function $g$ for some $g$ with $g\in x\polg[y]$, where we define
$\polg[y]:=y\pol x{<k}\left\{ y\right\} $, the algebra of germs of
a holomorphic function of the form
\begin{eqnarray*}
\polg & = & \left\{ f\in\germ{x,y}\,:\, f\left(x,y\right)=\sum_{n>0}f_{n}\left(x\right)y^{n}\,\,,\, f_{n}\in\pol x{<k}\right\} \,.
\end{eqnarray*}
 Let $G:=\frac{g}{x}$; by construction $U_{G}X_{R}$ has the desired
temporal modulus. The construction yields as a by-product a \textbf{natural
section} $\sec R$ of the period operator
\begin{eqnarray*}
\sec R\,:\,\left(h\germ h\right)^{k} & \longrightarrow & x\polg\\
\left(f^{j}\right)_{j\in\zsk} & \longmapsto & g\,.
\end{eqnarray*}
The main difficulty here is to control the size of the domain on which
$\sec R\left(\left(f^{j}\right)_{j}\right)$ is holomorphic in terms
of that of the $f^{j}$.

\subsubsection{Explicit computations and algorithms: Section~\ref{sec:Explicit}}

~

Apart from numerical algorithms we establish in order to prove the
Computation~Theorem in Section~\ref{sub:Computing}, we also present
a way to perform symbolic calculations in Section~\ref{sub:symbolic_convergent}.
All these techniques are based on the fact that the orbital and temporal
modulus are expressed in terms of the period operator (Remark~\ref{rem_orbital_realization})
and its natural section. The period is nothing else than an integral
of an explicit differential form along a path tangent to the vector
field. Yet the key point allowing these computations to be carried
out is the fact that when $Z$ is a convergent vector field written
in Dulac's prepared form~(\ref{eq:Dulac}) then the correspondence
linking the Taylor coefficients of a function and that of its period
is block-triangular.

When $Z$ is divergent it is still possible to carry out explicit
numerical computations, as will be presented in our upcoming work,
although the symbolic side appears more difficult to fathom. This
difficulty is well-known to specialists, see for instance the discussion
appearing in~\cite{Ecal}.

\subsection{Notations and basic definitions}

~

Throughout the article we use the following notations and conventions:
\begin{itemize}
\item We use bold-typed letters to refer to vectors $\mathbf{z}=\left(z_{1},\ldots,z_{n}\right)\in\ww C^{n}$.
\item $\pol z{\leq k}$ is the algebra of polynomials in $z$ of degree
at most $k$. By extension $\pol z{<k}$ stands for $\pol z{\leq k-1}$
and so on.
\item $\frml{\mathbf{z}}$ is the algebra of formal power series in the
(multi)variable $\mathbf{z}$.
\item $\germ{\mathbf{z}}$ is the algebra of germs of a holomorphic function
near $\mathbf{0}$ in the (multi)variable $\mathbf{z}$.
\item If $\mathcal{U}$ is a domain of $\ww C^{n}$ let $\mathcal{O}\left(\mathcal{U}\right)$
denote the algebra of functions holomorphic on $\mathcal{U}$. Then
let $\mathcal{O}\left(\mathcal{U}\right)\left\{ y\right\} $ denote
the set of functions holomorphic on $\mathcal{U}\times r\ww D$ for
sufficiently small $r>0$ ; more precisely, it is the inductive limit
of the algebras $\mathcal{O}\left(\mathcal{U}\times r\ww D\right)$
as $r\to0$.
\item $Z$ is a saddle-node vector field near $\left(0,0\right)$ under
Dulac's prepared form~(\ref{eq:dulac}). The notation $X$ is in
general reserved to saddle-node vector fields whose $\pp x$-component
is a function of $x$ only.
\item $Z\cdot$ stands for the Lie derivative along $Z$, stably acting
on $\frml{x,y}$ and on $\germ{x,y}$.
\item $\left(k,\mu\right)\in\ww N_{>0}\times\ww C\backslash\ww R_{\leq0}$
is the formal orbital modulus of $Z$ while $P\in\pol x{\leq k}$
with $P\left(0\right)\neq0$ is its formal temporal modulus.
\item $\polg[y]:=y\pol x{<k}\left\{ y\right\} $ is the algebra of germs
of a holomorphic function of the form
\begin{eqnarray*}
\polg & = & \left\{ f\in\germ{x,y}\,:\, f\left(x,y\right)=\sum_{n>0}f_{n}\left(x\right)y^{n}\,\,,\, f_{n}\in\pol x{<k}\right\} \,.
\end{eqnarray*}

\item $\left(V^{j}\right)_{j\in\zsk}$ are the sectors in the $x$-variable
which covers $\ww C_{\neq0}$, see Definition~\ref{def_sectors}
and Figures~\ref{fig:local_sectors},~\ref{fig:sector_and_leaf}.
From these we construct
\begin{eqnarray*}
V^{j,s} & := & V^{j}\cap V^{j+1}\,.
\end{eqnarray*}

\item If $\mathcal{V}$ is a domain of $\ww C^{2}$ we define the associated
sectors $\mathcal{V}^{j}$ and $\mathcal{V}^{j,s}$ as $\mathcal{V}\cap\left(V^{j}\times\ww C\right)$
and $\mathcal{V}\cap\left(V^{j,s}\times\ww C\right)$ respectively,
for $j\in\zsk$. One kind of domain will be of special interest:
\begin{eqnarray*}
\mathcal{V}_{\rho} & := & \ww C\times\rho\ww D=\left\{ \left(x,y\right)\,:\,\left|y\right|<\rho\right\} 
\end{eqnarray*}
 for $\rho>0$.
\item $X_{R}$ is the vector field associated to some $R\in y\ww C\left\{ x,y\right\} $
by
\begin{eqnarray*}
X_{R} & := & x^{k+1}\pp x+y\left(1+\mu x^{k}+xR\right)\pp y.
\end{eqnarray*}
It represents one of the normal forms given in Theorem \ref{thm:convergent},
if $R\in\polg[x^{\sigma}y]$. Observe that for $R=0$, we obtain the
formal model $X_{0}=x^{k+1}\pp x+y\left(1+\mu x^{k}\right)\pp y$.
\item $N=\left(N^{j}\right)_{j\in\zsk}$ is the collection of sectorial
normalizing functions for $X_{R}$, that is functions $N^{j}\in y\mathcal{O}\left(V^{j}\right)\left\{ y\right\} $
such that $\left(x,y\exp N^{j}\right)^{*}X_{0}=X_{R}$.
\item $H_{N}^{j}\in y\mathcal{O}\left(V^{j}\right)\left\{ y\right\} ,\ j\in\zsk$,
are the sectorial first-integral associated to $X_{R}$ (Definition~\ref{def_first-integral}):
\begin{eqnarray*}
H_{N}^{j}\left(x,y\right) & := & y\, e^{2i\pi\mu j/k}\exp\left(\frac{x^{-k}}{k}-\mu\log x+N^{j}\left(x,y\right)\right)\,.
\end{eqnarray*}
 Here the branch of the logarithm is chosen according to the sector,
\emph{i.e.} such that $\left|\arg x-j\frac{2\pi}{k}\right|\leq\frac{\pi}{k}+\beta$
for small $x$. 
\item $\per R$ is the period operator associated to $X_{R}$ (see Definition~\ref{def_period})
and $\sec R$ its natural section (see Corollary~\ref{prop:natural_section})
\begin{eqnarray*}
\per R\,:\, x^{k+1}\germ x+y\germ{x,y} & \longrightarrow & \left(h\germ h\right)^{k}\\
\sec R\,:\,\left(h\germ h\right)^{k} & \longrightarrow & x\polg\,.
\end{eqnarray*}

\item $\mathcal{M}\left(Z\right)=\left(\mu,P\right)\oplus\left(\varphi^{j},f^{j}\right)_{j\in\zsk}$
is the complete analytic modulus of $Z$. If $Z=U_{G}X_{R}$ is the
associated normal form then
\begin{eqnarray*}
\left(\varphi^{j},f^{j}\right)_{j\in\zsk} & = & \per R\left(-xR\right)\oplus\per R\left(\frac{1}{U_{G}}-\frac{1}{P}\right)\,.
\end{eqnarray*}

\end{itemize}
We introduce also some Banach spaces and norms.
\begin{defn}
\label{def_Banach}Let $\mathcal{D}\subset\ww C^{n}$ be a domain
containing $\mathbf{0}$ equipped with the coordinate $\mathbf{z}=\left(z_{1},\cdots,z_{n}\right)$. 
\begin{enumerate}
\item We define the Banach space $\mathcal{B}\left(\mathcal{D}\right)$
of functions bounded and holomorphic on $\mathcal{D}$ with values
in $\ww C$ equipped with the norm:
\begin{eqnarray*}
\norm f{\mathcal{D}} & := & \sup_{\mathbf{z}\in\mathcal{D}}\left|f\left(\mathbf{z}\right)\right|\,.
\end{eqnarray*}

\item We define the Banach space $\mathcal{B}'\left(\mathcal{D}\right)$
of functions holomorphic on $\mathcal{D}$, vanishing along $\left\{ z_{n}=0\right\} $,
equipped with the norm
\begin{eqnarray*}
\norm f{\mathcal{D}}' & := & \sup_{\mathbf{z}\in\mathcal{D}}\frac{\left|f\left(\mathbf{z}\right)\right|}{\left|z_{n}\right|}\,.
\end{eqnarray*}
Notice that
\begin{eqnarray*}
\norm f{\mathcal{D}}' & \leq & \norm{\frac{\partial f}{\partial z_{n}}}{\mathcal{D}}
\end{eqnarray*}
when $\frac{\partial f}{\partial z_{n}}\in\mathcal{B}\left(\mathcal{D}\right)$. 
\item For a finite collection $\mathcal{D}:=\left(\mathcal{D}_{j}\right)_{j}$,let
$\mathcal{B}\left(\mathcal{D}\right)$denote the Banach space $\prod_{j}\mathcal{B}\left(\mathcal{D}_{j}\right)$
equipped with the norm
\begin{eqnarray*}
\norm{\left(f_{j}\right)_{j}}{\mathcal{D}} & := & \max_{j}\norm{f_{j}}{\mathcal{D}_{j}}\,.
\end{eqnarray*}
The analogous definition is used for the space $\mathcal{B}'\left(\mathcal{D}\right)$.
\end{enumerate}
\end{defn}
In general we omit to indicate the dependence of the norm on the domain
when the context is not ambiguous.

\section{\label{sec:orbit-NF}Orbital normal forms}

Recall that in the following section $\mu$ is a non-zero, non-negative
complex number.

\subsection{Sectorial decomposition and first-integrals}

~

\begin{figure}
\hfill{}\includegraphics[width=12cm]{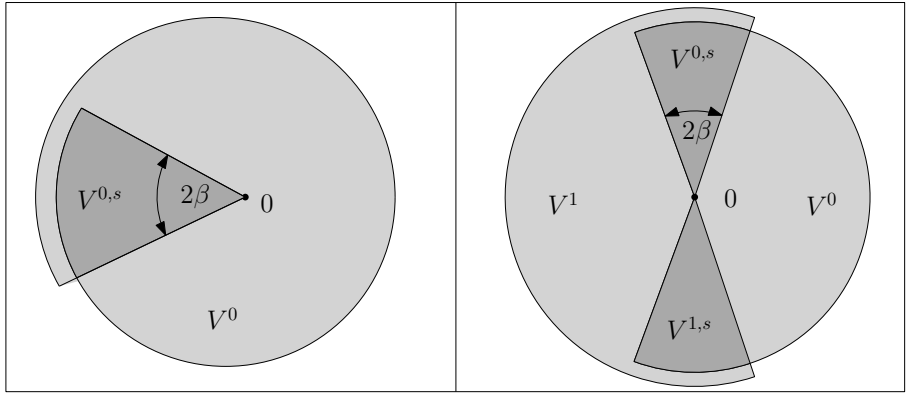}\hfill{}

\caption{\label{fig:local_sectors}Sectors near $0$ in the case $k=1$ (left)
along with the case $k=2$ (right)}
\end{figure}
\begin{figure}
\hfill{}\includegraphics[width=10cm]{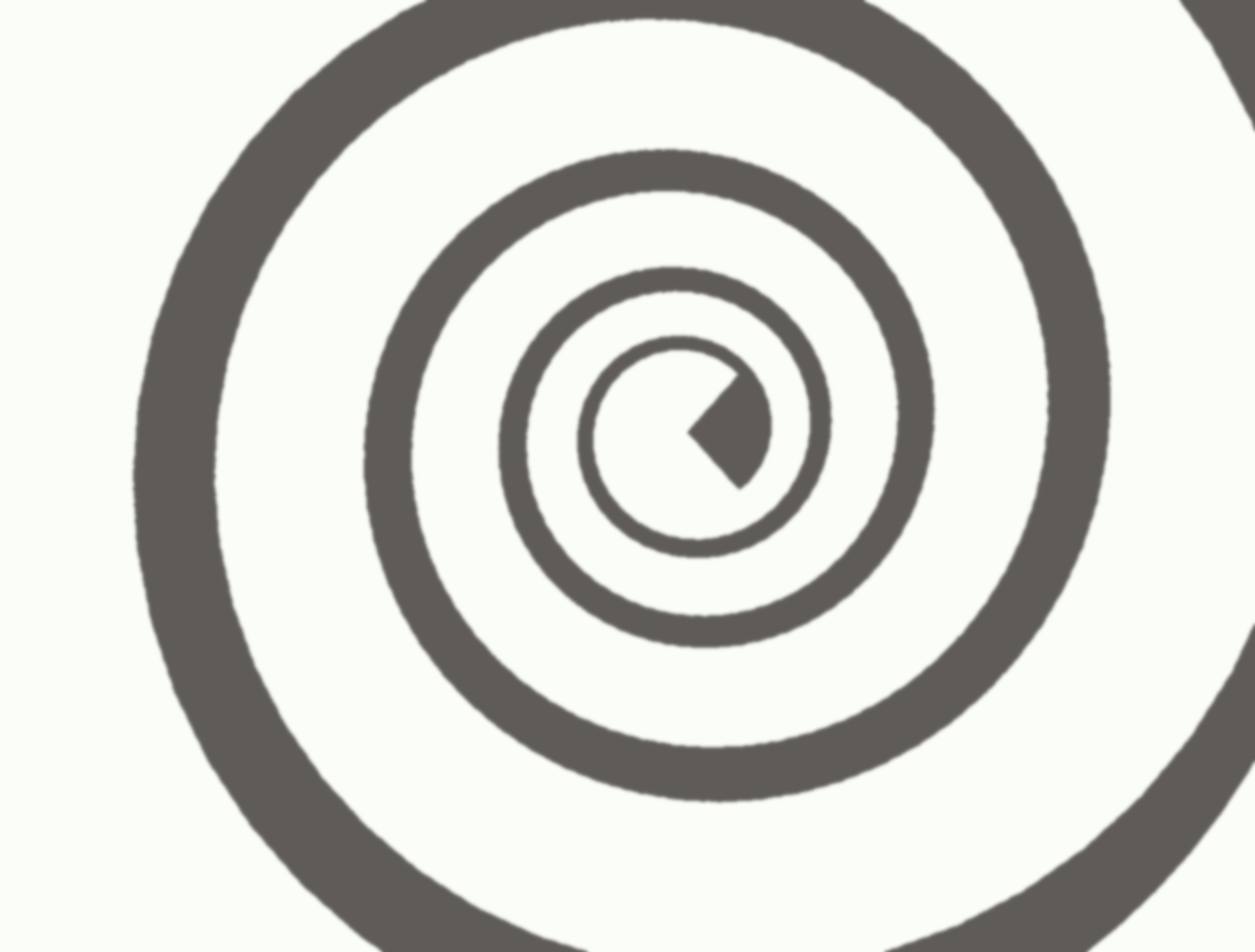}\hfill{}

\hfill{}\includegraphics[width=10cm]{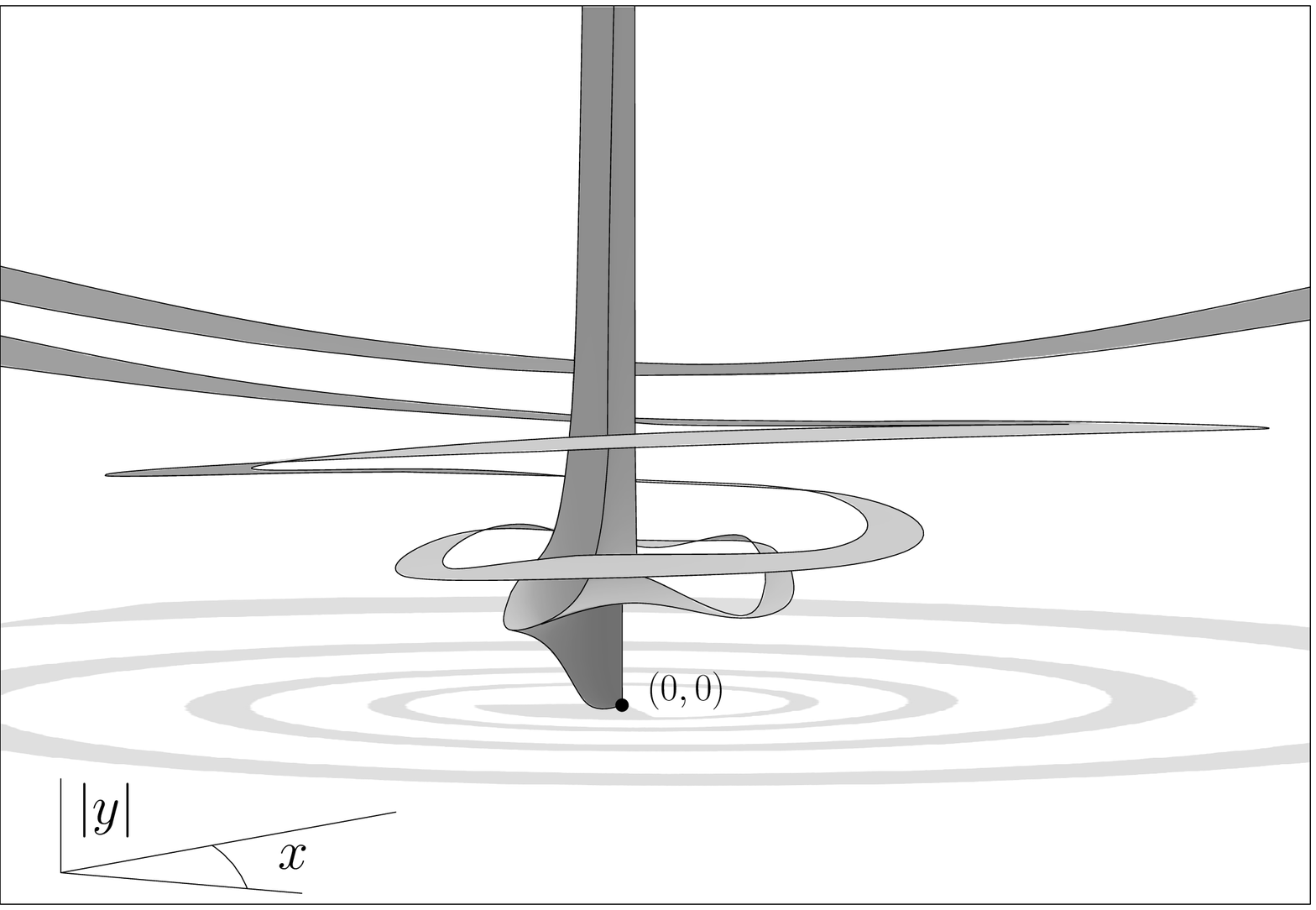}\hfill{}

\caption{\label{fig:sector_and_leaf}The sector $V^{0}$ in the $x$-variable
(top) and the absolute value of a sectorial leaf of the formal model
over it (bottom) for $k=3$ and $\mu=-\frac{1}{10}\left(1+\ii\right)$.}
\end{figure}

We fix once and for all a real number
\begin{eqnarray*}
0 & <\beta< & \frac{\pi}{2k}.
\end{eqnarray*}

\begin{defn}
\label{def_sectors}This definition should be read with the Figures~\ref{fig:local_sectors}~and~\ref{fig:sector_and_leaf}
in mind.
\begin{enumerate}
\item The sectorial decomposition of the $x$-variable is the collection
of $k$ sector-like domains $V^{j}$ defined as the union of 

\begin{itemize}
\item a standard sector of $r\ww D$
\begin{eqnarray*}
 & \left\{ x\,:\,\left|\arg x-\theta^{j}\right|<\frac{\pi}{k}+\beta\,\,\,\textrm{and}\,\,\,0<\left|x\right|<r\right\}  & \,\,\,,\, j\in\zsk
\end{eqnarray*}
where $\theta^{j}:=j\frac{2\pi}{k}$
\item a spiraling sector, bounded by two spirals 
\begin{eqnarray*}
S^{j,\pm} & :=r\exp\left(\ii\theta^{j}\pm\ii\frac{\pi}{k}\pm\ii\beta\right)\exp\left(\left(1+\ii\nu\right)\ww R_{\geq0}\right) & \,\,\,,\, j\in\zsk
\end{eqnarray*}
where $\nu\in\ww R$ is chosen (once and for all) in such a way that
\begin{eqnarray}
\re{\mu} & > & \nu\im{\mu}\,.\label{eq:asymptotic_spiral_behavior}
\end{eqnarray}
 In particular when $\re{\mu}>0$ we can take $\nu:=0$ and the sector
$V^{j}$ coincides with a standard sector of infinite radius.
\end{itemize}

Notice that $\bigcup_{j\in\zsk}V^{j}=\ww C\backslash\left\{ 0\right\} $.

\item We denote by $\Gamma^{j,\pm}$ the two connected components of $\partial V^{j}$,
consisting of the concatenation of the segment $I^{j,\pm}:=]0,r\exp\left(\ii\theta^{j}\pm\ii\frac{\pi}{k}\pm\ii\beta\right)]$
and the spiral $S^{j,\pm}$.
\item We define the\textbf{ saddle-part} of $V^{j}$ as
\begin{eqnarray*}
V^{j,s} & := & V^{j}\cap V^{j+1}\,.
\end{eqnarray*}

\item For any domain $\mathcal{V}\subset\ww C^{2}$ containing $\left\{ y=0\right\} $
we define the \textbf{sectorial decomposition of $\mathcal{V}$} by
\begin{eqnarray*}
\mathcal{V}^{j} & := & \mathcal{V}\cap\left(V^{j}\times\ww C\right)\,\,\,\,\,,\, j\in\zsk\\
\mathcal{V}^{j,s} & := & \mathcal{V}\cap\left(V^{j,s}\times\ww C\right)\,\,\,\,\,,\, j\in\zsk
\end{eqnarray*}

\end{enumerate}
\end{defn}
\begin{rem}
\label{rem_k=00003D1}In the case $k=1$ there is a slight problem
in the definition of $V^{0}$. We make the convention that $V^{0}$,
near $0$, is a sector of aperture greater than $2\pi$ which overlaps
with itself above $\ww R_{<0}$ without gluing (see Figure~\ref{fig:local_sectors}).
Let $V^{0,s}$ denote this overlap in the case $k=1$.\end{rem}
\begin{defn}
\label{def_first-integral}Let $\mathcal{V}\subset\ww C^{2}$ be a
domain containing $\left\{ y=0\right\} $.

For every collection $N:=\left(N^{j}\right)_{j\in\zsk}$in $\mathcal{B}\left(\left(\mathcal{V}^{j}\right)_{j}\right)$
we define the collection $\left(H_{N}^{j}\right)_{j\in\zsk}$ of $k$
functions by 
\begin{eqnarray*}
H_{N}^{j}\left(x,y\right) & := & y\, e^{2\ii\pi j\mu/k}\exp\left(\frac{1}{kx^{k}}-\mu\log x+N^{j}\left(x,y\right)\right)\,,
\end{eqnarray*}
where we choose the determination of the logarithm such that $\left|\arg x-j\frac{2\pi}{k}\right|\leq\frac{\pi}{k}+\beta$
for small $x\in V^{j}$ and such that it is an analytic function in
this «spiraling sector». In this way, the value of $H_{N}^{j}(x,y)$
indeed only depends upon the class of $j$ modulo $k$. 

This collection is called the \textbf{sectorial first-integrals}\emph{
}associated to $N$.
\end{defn}

\subsection{Admissible domains and the refined Cauchy-Heine transform}

~
\begin{defn}
\label{def_adapted_sectors}Let $\rho\in]0,+\infty]$ be given.
\begin{enumerate}
\item We define the domain 
\begin{eqnarray*}
\mathcal{V}_{\rho} & := & \ww C\times\rho\ww D=\left\{ \left(x,y\right)\in\ww C^{2}\,:\,\left|y\right|<\rho\right\} \,,
\end{eqnarray*}
which is a neighborhood of $\left\{ y=0\right\} $. 
\item A collection $\Delta=\left(\Delta^{j}\right)_{j\in\zsk}$ of $k$
domains of $\ww C$ containing $0$ will be called \emph{admissible}.
\item We say that a couple $\left(\rho,N\right)$ with $N=\left(N^{j}\right)\in\mathcal{B}\left(\left(\mathcal{V}_{\rho}^{j}\right)_{j\in\zsk}\right)$
is \emph{adapted} to an admissible collection $\Delta$ if $H_{N}^{j}\left(\mathcal{V}_{\rho}^{j,s}\right)\subset\Delta^{j}$
for each $j\in\zsk$.
\end{enumerate}
\end{defn}
The next result is the basis of our construction.
\begin{thm}
\label{thm:Cauchy-Heine}Consider some admissible collection $\Delta$
and some $\left(\rho,N\right)$ adapted to $\Delta$ with $\rho<+\infty$.
Take any collection $f=\left(f^{j}\right)_{j\in\zsk}\in\mathcal{B}'\left(\Delta\right)$
and define the collection $\Sigma=\left(\Sigma^{j}\right)_{j\in\zsk}$
by
\begin{eqnarray*}
\Sigma^{j}\left(x,y\right) & := & \frac{x}{2\ii\pi}\sum_{\ell\neq j+1}\int_{\Gamma^{\ell-}}\frac{f^{\ell}\left(H_{N}^{\ell}\left(z,y\right)\right)}{z\left(z-x\right)}\dd z+\frac{x}{2\ii\pi}\int_{\Gamma^{j+}}\frac{f^{j}\left(H_{N}^{j}\left(z,y\right)\right)}{z\left(z-x\right)}\dd z,\ \ (x,y)\in\mathcal{V}_{\rho}^{j},
\end{eqnarray*}
where the paths were described in Definition \ref{def_sectors}. Here
integrals over $\Gamma^{\ell\pm}$ are more precisely integrals in
$V^{\ell}$over paths arbitrarily close to $\Gamma^{\ell\pm}$. The
following properties hold.
\begin{enumerate}
\item $\Sigma^{j}\in\mathcal{B}\left(\mathcal{V}^{j}\right)$.
\item For all $j\in\zsk$ we have 
\begin{eqnarray*}
\Sigma^{j+1}-\Sigma^{j} & = & f^{j}\circ H_{N}^{j}
\end{eqnarray*}
on $\mathcal{V}_{\rho}^{j,s}$.
\item For every $j\in\zsk$
\begin{eqnarray*}
\lim_{x\to0}\Sigma^{j}\left(x,y\right) & = & 0
\end{eqnarray*}
locally uniformly in $y\in$$\rho\ww D$.
\item Any other collection $\left(\tilde{\Sigma}^{j}\right)_{j\in\zsk}\in\mathcal{B}\left(\left(\mathcal{V}_{\rho}^{j}\right)_{j\in\zsk}\right)$
satisfying~(1) and~(2) differs from $\Sigma$ by the component-wise
addition of a single holomorphic function $y\mapsto F\left(y\right)$
in $\mathcal{B}\left(\rho\ww D\right)$.
\item One has the estimates

\begin{enumerate}
\item 
\begin{eqnarray*}
\norm{\Sigma}{} & \leq & \rho K\norm f{}'e^{\norm N{}}
\end{eqnarray*}

\item 
\begin{eqnarray*}
\norm{y\frac{\partial\Sigma}{\partial y}}{} & \leq & \rho K\norm{f'}{}e^{\norm N{}}\left(1+\norm{y\frac{\partial N}{\partial y}}{}\right)
\end{eqnarray*}

\item 
\begin{eqnarray*}
\norm{x\frac{\partial\Sigma}{\partial x}}{} & \leq & \rho K\norm{f'}{}e^{\norm N{}}\left(1+\norm{x\frac{\partial N}{\partial x}}{}\right)
\end{eqnarray*}
with some constant $K>0$ depending only on $k$, $\mu$, $\nu$,
$\beta$ and $r$.
\end{enumerate}
\end{enumerate}
\end{thm}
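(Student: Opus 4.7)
My plan is to address the five claims in sequence, with the jump relation (claim~2) being the technical heart.

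For claim~1 I would verify that each integral defining $\Sigma^j$ converges absolutely and uniformly on compacta of $\mathcal{V}_\rho^j$, and therefore represents a holomorphic function. The crux is the decay of $f^\ell\circ H_N^\ell$ at both ends of each path $\Gamma^{\ell,\pm}$: near $z=0$ the boundary rays lie in saddle directions for $V^\ell$, so $\Re(1/(kz^k))\to-\infty$ super-exponentially and $|H_N^\ell(z,y)|\leq|y|\,|e^{1/(kz^k)-\mu\log z}|\,e^{\|N\|}$ dominates the $1/z$ factor of the kernel; near $z=\infty$ along the spirals, condition~(\ref{eq:asymptotic_spiral_behavior}) makes $|z^{-\mu}|$ decay polynomially, while the remaining $1/(z-x)$ gives a further $1/|z|$. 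Path deformation keeps the kernel bounded away from $z=x$, and holomorphy in $(x,y)$ follows from differentiation under the integral. Claim~3 is then immediate: the explicit factor~$x$ combined with the finite limits of these integrals as $x\to 0$ gives $\Sigma^j(x,y)=x\cdot O(1)$ locally uniformly in~$y$.

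For claim~2 on $\mathcal{V}_\rho^{j,s}$ I would cancel the common contributions and isolate a single residue. The integrals over $\Gamma^{\ell,-}$ with $\ell\notin\{j+1,j+2\}$ appear identically in $\Sigma^{j+1}$ and $\Sigma^j$ (same path deformation into $V^\ell$, same integrand $f^\ell\circ H_N^\ell$) and cancel. The residual contributions involve the two rays $\Gamma^{j,+},\,\Gamma^{j+1,-}$ bordering $V^{j,s}$ and the two auxiliary rays $\Gamma^{j+1,+},\,\Gamma^{j+2,-}$ bordering $V^{j+1,s}$. For $(x,y)\in\mathcal{V}_\rho^{j,s}$, the adjacent-ray paths deform inside $V^{j,s}$ where both $f^j\circ H_N^j$ and $f^{j+1}\circ H_N^{j+1}$ are simultaneously defined, and together bound a region enclosing~$x$; Cauchy's residue theorem then yields exactly $f^j(H_N^j(x,y))$, provided one checks that the small closing arcs at~$0$ and~$\infty$ contribute nothing (using the decay from claim~1). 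The auxiliary-ray contributions cancel by a parallel deformation argument inside $V^{j+1,s}$, where $x$ lies far from the contour and no residue is picked up.

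For claim~4, any alternative $\tilde\Sigma$ yields a sectorial collection $\tilde\Sigma^j-\Sigma^j$ with vanishing jumps whose pieces glue into a bounded holomorphic function on $(\ww C\setminus\{0\})\times\rho\ww D$; this extends across $\{x=0\}$ by Riemann's theorem, and for each fixed~$y$ the resulting bounded entire function of~$x$ is constant by Liouville, hence depends only on~$y$. For claim~5, I bound the integrand pointwise via $\|f\|'|y|e^{\|N\|}|e^{1/(kz^k)-\mu\log z}|$, so that $|x|\sum_\ell\int|e^{1/(kz^k)-\mu\log z}|/|z(z-x)|\,|dz|$ is uniformly bounded in $x\in V^j$ by a constant $K=K(k,\mu,\nu,\beta,r)$, giving~(a); differentiating under the integral and using $y\partial_y H_N^\ell = H_N^\ell(1+y\partial_y N^\ell)$ together with $x\partial_x H_N^\ell = H_N^\ell(-x^{-k}-\mu+x\partial_x N^\ell)$ then gives~(b) and~(c) with the additional factors $(1+\|y\partial_y N\|)$ and $(1+\|x\partial_x N\|)$ respectively. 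The principal obstacle will be claim~2: tracking the cancellations and pinpointing the precise closed contour in $V^{j,s}$, together with verifying that the closing arcs vanish, requires a careful geometric analysis tuned to the specific path deformations built into the definition of~$\Sigma^j$.
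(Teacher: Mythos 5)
Your overall architecture matches the paper's: claim~(1) by direct bounds on $|z^{-\mu-1}\exp(z^{-k}/k)|$ along $I^{j,\pm}$ and $S^{j,\pm}$; claim~(2) by cancelling common terms and applying Cauchy deformation; claim~(3) by extracting the extra factor of~$x$; claim~(4) by gluing plus Riemann/Liouville; and claim~(5) by bounding the kernel, using $y\partial_y H_N=H_N(1+y\partial_y N)$ and $x\partial_x H_N=H_N(-x^{-k}-\mu+x\partial_x N)$. Items (1), (3), (4), (5) are fine.

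The gap is precisely where you anticipated it, in claim~(2), and it is more than a matter of tracking details. After the common terms cancel, the four surviving contributions to $\Sigma^{j+1}-\Sigma^{j}$ are: from the sums, the $\ell=j+1$ term of $\Sigma^{j+1}$ on $\Gamma^{j+1,-}$ minus the $\ell=j+2$ term of $\Sigma^{j}$ on $\Gamma^{j+2,-}$; and the singletons on $\Gamma^{j+1,+}$ and $\Gamma^{j,+}$. Reading the theorem's formula literally, the integrands on the two $V^{j,s}$-adjacent rays are \emph{different}: $f^{j}\circ H_{N}^{j}$ on $\Gamma^{j,+}$ but $f^{j+1}\circ H_{N}^{j+1}$ on $\Gamma^{j+1,-}$. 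A two-piece closed contour $\Gamma^{j,+}\cup\Gamma^{j+1,-}$ around $x$ cannot be fed to Cauchy's residue theorem when the integrand is a \emph{different} holomorphic function on each arc; since $H_N^{j}\neq H_N^{j+1}$ on $V^{j,s}$ and $f^{j}, f^{j+1}$ are arbitrary, no residue calculus or algebraic relation rescues this. Your phrase ``where both $f^{j}\circ H_{N}^{j}$ and $f^{j+1}\circ H_{N}^{j+1}$ are simultaneously defined\dots; Cauchy's residue theorem then yields exactly $f^{j}(H_{N}^{j}(x,y))$'' therefore does not follow. The paper's own proof of~(2) uses the \emph{same} integrand $f^{j}\circ H_{N}^{j}$ on both $\Gamma^{j,+}$ and $\Gamma^{j+1,-}$ (and $f^{j+1}\circ H_{N}^{j+1}$ on both $\Gamma^{j+1,+}$ and $\Gamma^{j+2,-}$); that is, the proof is consistent with the integrand on $\Gamma^{\ell,-}$ being $f^{\ell-1}\circ H_{N}^{\ell-1}$, the function naturally attached to the saddle part $V^{\ell-1,s}$ that this ray borders — not $f^{\ell}\circ H_{N}^{\ell}$ as written in the displayed formula. (The adaptedness hypothesis, which only controls $H_N^{j}$ on $\mathcal{V}_\rho^{j,s}$, also points to this reading.) To close the gap you should either adopt that reading (at which point your residue argument goes through verbatim, and the auxiliary-ray pair $\Gamma^{j+1,+},\Gamma^{j+2,-}$ cancels because both carry $f^{j+1}\circ H_N^{j+1}$ and $x\notin V^{j+1,s}$), or flag the inconsistency explicitly; as it stands, your argument silently assumes an integrand alignment that the definition you were handed does not provide.
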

\begin{rem}
A value for $K$ is given in the proof, but not very explicitly.\end{rem}
\begin{defn}
\label{def_RCH}Under the hypothesis of the theorem, we let the \textbf{refined
Cauchy-Heine transform} of $f$, associated to $N$, denote the collection
of functions $\Sigma\left(N,f\right):=\left(\Sigma^{j}\right)_{j\in\zsk}$
defined by the previous theorem. We choose this set of functions satisfying~(2)
because they are <<normalized>>: they tend to $0$ as $V^{j}\ni x\to0$.
\end{defn}
Let us now give a proof to Theorem~\ref{thm:Cauchy-Heine}.
\begin{proof}
In order to prove the holomorphy of $\Sigma^{j}$ and as a first step
to establish the estimates (5), we only consider the case of the integral
along $\Gamma^{j,+}$. For the sake of clarity we omit, wherever not
confusing, to indicate the indexes $j$, $N$ and $+$. 

By the above Definitions~\ref{def_Banach} and~\ref{def_first-integral},
one has 
\begin{eqnarray}
\left|x\frac{f\left(H\left(z,y\right)\right)}{z(z-x)}\dd z\right| & \leq & \norm f{}'\left|xH\left(z,y\right)\frac{\dd z}{z(z-x)}\right|\label{eq:maj_integ}\\
 & \leq & \rho A\norm f{}'e^{\norm N{}}q\left(x\right)\left|z^{-\mu-1}\exp\frac{z^{-k}}{k}\right|\left|\dd z\right|
\end{eqnarray}
where $q\left(x\right)=\sup\left\{ \frac{|x|}{|z-x|}\text{ }:\text{ }z\in\Gamma\right\} $
and $A=e^{2\pi\left|\mu\right|}$. The first thing to establish is
that the improper integrals in the construction of $\Sigma$ are absolutely
convergent. Using the notation introduced in Definition~\ref{def_sectors},
we have for $z\in I$ 
\begin{eqnarray*}
z & = & x_{*}t\,\,\,,\, t\in\left[0,1\right]\\
\left|\dd z\right| & = & r\dd t
\end{eqnarray*}
where 
\begin{eqnarray*}
x_{*} & := & r\exp\left(\ii\frac{\pi}{k}\left(2j+1\right)+\ii\beta\right)\,,
\end{eqnarray*}
while for $z\in S$
\begin{eqnarray*}
z & = & x_{*}\exp\left(\left(1+\ii\nu\right)t\right)\,\,\,,\, t\geq0\\
\left|\dd z\right| & = & \left|1+\ii\nu\right|\left|z\right|\dd t\,.
\end{eqnarray*}
For $z\in I$ we have $\arg\left(z^{k}\right)=\pi+k\beta$ and therefore
\begin{eqnarray*}
\left|z^{-\mu-1}\exp\left(\frac{z^{-k}}{k}\right)\dd z\right| & \leq & \left|x_{*}^{-\mu-1}\right|t^{-\re{\mu}-1}\exp\left(-r^{-k}\cos\left(k\beta\right)t^{-k}/k\right)\dd t
\end{eqnarray*}
is integrable on $\left[0,1\right]$. For $z$ is on $S$ we have
$\left|z^{-\mu-1}\right|=\left|x_{*}^{-\mu-1}\right|e^{\alpha t}$
with 
\begin{eqnarray*}
\alpha & :=-\re{\mbox{\ensuremath{\left(\mu+1\right)\left(1+i\nu\right)}}}= & -\re{\mu}-1+\nu\im{\mu}<-1
\end{eqnarray*}
(see Definition \ref{def_sectors}) and therefore
\begin{eqnarray*}
\left|z^{-\mu-1}\exp\left(\frac{z^{-k}}{k}\right)\dd z\right| & \leq & \left|1+\nu\ii\right|\exp\frac{r^{-k}}{k}\left|x_{*}^{-\mu}\right|e^{\left(\alpha+1\right)t}\dd t
\end{eqnarray*}
is also integrable on $[0,+\infty[$. Hence 
\[
\left|\frac{x}{2\ii\pi}\int_{\Gamma}\frac{f\left(H\left(z,y\right)\right)}{z\left(z-x\right)}\dd z\right|\leq\rho AL\norm f{}'e^{\norm N{}}q\left(x\right),
\]
where $L=\frac{1}{2\pi}\int_{\Gamma}\left|z^{-\mu-1}\exp\frac{z^{-k}}{k}\right|\left|\dd z\right|.$
In the case of $\mu$ with negative real part, it is crucial to use
the spiral shape of the paths near $\infty$ as required by~(\ref{eq:asymptotic_spiral_behavior})
of Definition~\ref{def_sectors}. 

\begin{figure}
\hfill{}\includegraphics[width=12cm]{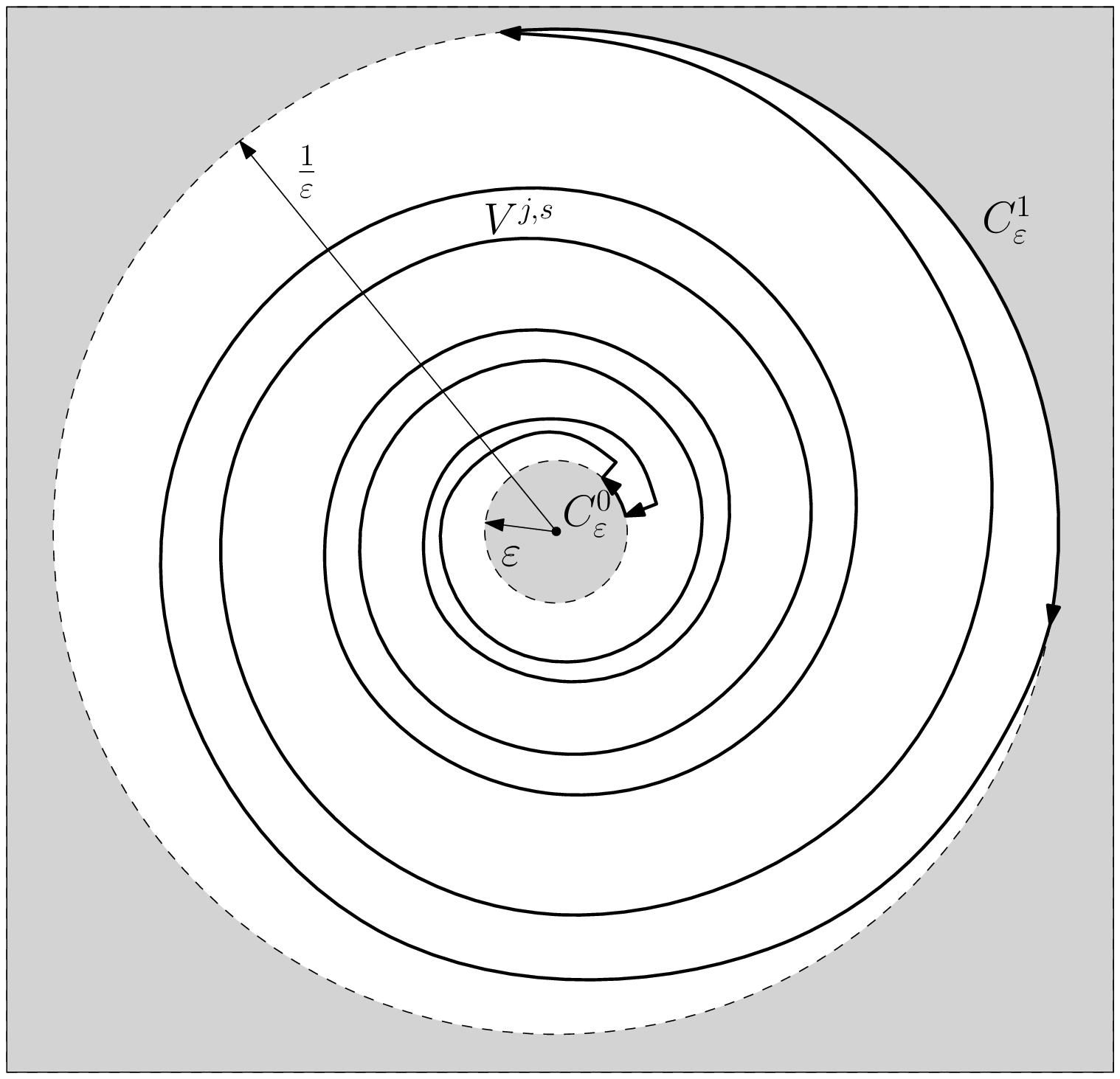}\hfill{}

\caption{\label{fig:Cauchy_contour}}
\end{figure}

By choosing the paths of integration sufficiently close to the boundaries
of the sectors, we obtain that $\Sigma^{j}$ is an analytic function
on $\mathcal{V}_{\rho}^{j}$. The boundedness of $\Sigma$ will be
shown later; only then the proof of (1) is complete.

Claim~(2) is obtained by Cauchy's formula. For $\varepsilon>0$ small
enough, we define the contour $\mathcal{C}_{\varepsilon}$ as in Figure~\ref{fig:Cauchy_contour}.
It is positively oriented and consists of:
\begin{itemize}
\item the arc $C_{\varepsilon}^{0}$ of the circle $\left\{ \left|z\right|=\varepsilon\right\} $
between $\Gamma^{j,+}$ and $\Gamma^{j+1,-}$, 
\item the curve $\Gamma^{j+1,-}\cap\left\{ \varepsilon\leq\left|z\right|\leq\frac{1}{\varepsilon}\right\} $,
\item the arc $C_{\varepsilon}^{1}$ of the circle $\left\{ \left|z\right|=\frac{1}{\varepsilon}\right\} $
between $\Gamma^{j,+}$ and $\Gamma^{j+1,-}$,
\item the curve $\Gamma^{j,+}\cap\left\{ \varepsilon\leq\left|z\right|\leq\frac{1}{\varepsilon}\right\} $,
.
\end{itemize}
Therefore whenever $x\in V^{j,s}$, $\varepsilon<\left|x\right|<\frac{1}{\varepsilon},$
we have 
\begin{eqnarray*}
\frac{1}{2\ii\pi}\int_{\mathcal{C}_{\varepsilon}}\frac{f^{j}\left(H_{N}^{j}\left(z,y\right)\right)}{z\left(z-x\right)}\dd z & = & \frac{f^{j}\left(H_{N}^{j}\left(x,y\right)\right)}{x}\,.
\end{eqnarray*}
Taking the limit as $\varepsilon\to0$ yields that 

\begin{equation}
\frac{x}{2\ii\pi}\int_{\Gamma^{j+1,-}}\frac{f^{j}\left(H_{N}^{j}\left(z,y\right)\right)}{z\left(z-x\right)}\dd z-\frac{x}{2\ii\pi}\int_{\Gamma^{j+}}\frac{f^{j}\left(H_{N}^{j}\left(z,y\right)\right)}{z\left(z-x\right)}\dd{z=f^{j}\left(H_{N}^{j}\left(x,y\right)\right)}\label{eq:cauchy-gamma}
\end{equation}
because the integrals on $C_{\varepsilon}^{0}$ and $C_{\varepsilon}^{1}$
tend to $0$. Indeed, for values of $\varepsilon$ less than $\min\left(r,\frac{\left|x\right|}{2}\right)$,
if $z\in C_{\varepsilon}^{0}$, then we have $\left|\arg z-\left(2j+1\right)\frac{\pi}{k}\right|<\beta$
and $\left|z\left(z-x\right)\right|\geq\varepsilon\left|\frac{x}{2}\right|$
. Therefore
\begin{eqnarray*}
\int_{C_{\varepsilon}^{0}}\left|\frac{xz^{-\mu-1}}{z-x}\exp\left(\frac{z^{-k}}{k}\right)\dd z\right| & \leq & 4\beta\varepsilon^{-\re{\mu}}e^{2\pi\left|\mu\right|}\exp\left(-\frac{\varepsilon^{-k}}{k}\cos\left(k\beta\right)\right)\,.
\end{eqnarray*}
If $z\in C_{\varepsilon}^{1}$ we still have $\left|z-x\right|\geq\left|x\right|$
and therefore
\begin{eqnarray*}
\int_{C_{\varepsilon}^{1}}\left|\frac{xz^{-\mu-1}}{z-x}\exp\left(\frac{z^{-k}}{k}\right)\dd z\right| & \leq & 2\beta\, r^{-\nu\im{\mu}}e^{2\pi\left|\mu\right|}\varepsilon^{-\alpha-1}\exp\left(\frac{\varepsilon^{k}}{k}\right)\,.
\end{eqnarray*}
In a similar way, we show that 
\[
\frac{x}{2\ii\pi}\int_{\Gamma^{j+1,+}}\frac{f^{j+1}\left(H_{N}^{j+1}\left(z,y\right)\right)}{z\left(z-x\right)}\dd z=\frac{x}{2\ii\pi}\int_{\Gamma^{j+2,-}}\frac{f^{j+1}\left(H_{N}^{j+1}\left(z,y\right)\right)}{z\left(z-x\right)}
\]
for $\left(x,y\right)\in\mathcal{V}^{j+1,s}$. This completes the
proof of~(2). 

Next we show (5)(a) (and at the same time the boundedness part of
(1)). For that purpose, we first consider the subset $W^{j}$ of $V^{j}$
containing all $x$ such that $xe^{\ii s}\in V^{j}$ for real $s,$
$0\leq s\leq\beta/2.$ Then $q_{j}\left(x\right)\leq1/\sin\left(\beta/2\right)$
if $\left|x\right|$ is small whereas $1/q_{j}\left(x\right)$ is
bounded below by the distance of the point $e^{\ii\beta/2}$ to the
spiral $\exp\left(\left(1+\ii\nu\right)\ww R\right)$ for large $\left|x\right|.$
As $q_{j}$ is a continuous function, this implies that it is bounded
on $W^{j}$$.$ Let $Q$ denote some bound. Thus we have shown that
\[
\left|\frac{x}{2\ii\pi}\int_{\Gamma^{j,+}}\frac{f^{j}\left(H_{N}^{j}\left(z,y\right)\right)}{z\left(z-x\right)}\dd z\right|\leq\rho ALQ\norm f{}'e^{\norm N{}}
\]
for $x\in W^{j}$ and $\left|y\right|\leq\rho$. For $x\in V^{j}\setminus W^{j}$,
we use (\ref{eq:cauchy-gamma}) and estimate the integral over $\Gamma^{j+1,-}$
in the same way. This yields 
\[
\left|\frac{x}{2\ii\pi}\int_{\Gamma^{j,+}}\frac{f^{j}\left(H_{N}^{j}\left(z,y\right)\right)}{z\left(z-x\right)}\dd z\right|\leq\rho A(LQ+M)\norm f{}'e^{\norm N{}}
\]
 for $x\in V^{j},\left|y\right|\leq\rho$, where $M$ denotes the
supremum of $\left|x^{-\mu-1}\exp\frac{x^{-k}}{k}\right|$ on $V^{j}.$
The remaining integrals in the definition of $\Sigma^{j}$ are treated
similarly. This yields (5)(a).

From the above estimates, we can also deduce~(5)(b): since

\[
y\frac{\partial H}{\partial y}\left(x,y\right)=H\left(x,y\right)\left(1+y\frac{\partial N}{\partial y}\left(x,y\right)\right)
\]
we use

\begin{eqnarray*}
\left|x\frac{f'\left(H\left(z,y\right)\right)}{z(z-x)}y\frac{\partial H}{\partial y}\left(z,y\right)dz\right| & \leq & \rho A\norm{f'}{}e^{\norm N{}}\norm{1+y\frac{\partial N}{\partial y}}{}q\left(x\right)\left|z^{-\mu-1}\exp\frac{z^{-k}}{k}\right|\left|\dd z\right|
\end{eqnarray*}
instead of (\ref{eq:maj_integ}). 

For (5)(c), observe that 
\[
x\frac{\partial\Sigma^{j}}{\partial x}\left(x,y\right)=\Sigma^{j}(x,y)+\frac{x^{2}}{2\ii\pi}\sum_{\ell\neq j+1}\int_{\Gamma^{\ell-}}\frac{f^{\ell}\left(H_{N}^{\ell}\left(z,y\right)\right)}{z\left(z-x\right)^{2}}\dd z+\frac{x^{2}}{2\ii\pi}\int_{\Gamma^{j+}}\frac{f^{j}\left(H_{N}^{j}\left(z,y\right)\right)}{z\left(z-x\right)^{2}}\dd z,\ \ (x,y)\in\mathcal{V}_{\rho}^{j}.
\]
We estimate the new integrals similarly to the beginning using 
\[
\left|x^{2}\frac{f\left(H\left(z,y\right)\right)}{z(z-x)^{2}}\dd z\right|\leq\rho A\norm f{}'e^{\norm N{}}q\left(x\right)^{2}\left|z^{-\mu-1}\exp\frac{z^{-k}}{k}\right|\left|\dd z\right|
\]
instead of (\ref{eq:maj_integ}). For $x$ close to $\Gamma^{j,+}$(or
to $\Gamma^{j-1,-}$) we have to use Cauchy's formula similarly to
(\ref{eq:cauchy-gamma}), but obtain on the right hand side $x^{2}\frac{\partial}{\partial x}\left(\frac{1}{x}f^{j}\left(H_{N}^{j}\left(x,y\right)\right)\right)$.
This requires an estimate of 
\[
x\frac{\partial}{\partial x}\left(f^{j}\left(H_{N}^{j}\left(x,y,\right)\right)\right)=\frac{\dd{f^{j}}}{\dd h}\left(H_{N}^{j}\left(x,y\right)\right)H_{N}^{j}\left(x,y\right)\left(-x^{-k}-\mu+x\frac{\partial N}{\partial x}\right)
\]
 on $V^{j}$. This is done analogously to the proof of (5)(a) and
yields the desired result.

For (3), we use instead of (\ref{eq:maj_integ}) 
\[
\left|\frac{f\left(H\left(z,y\right)\right)}{z(z-x)}\dd z\right|\leq\norm f{}'\left|H\left(z,y\right)\frac{\dd z}{z(z-x)}\right|\leq\rho A\norm f{}'e^{\norm N{}}\tilde{q}\left(x\right)\left|z^{-\mu-2}\exp\frac{z^{-k}}{k}\right|\left|\dd z\right|
\]
where $\tilde{q}\left(x\right)=\sup\left\{ \frac{|z|}{|z-x|}\mid z\in\Gamma\right\} .$
This yields that $\left|\Sigma^{j}\left(x,y\right)\right|\leq\left|x\right|\rho\tilde{K}\norm f{}'e^{\norm N{}}$
with some constant $\tilde{K}$ determined in a way analogous to $K$
and hence (3).

Point (4) is a consequence of the fact that the consecutive differences
of components of $\tilde{\Sigma}$ and $\Sigma$ agree hence 
\begin{eqnarray*}
\tilde{\Sigma}^{j}\left(x,y\right)-\Sigma^{j}\left(x,y\right) & =: & \delta\left(x,y\right)
\end{eqnarray*}
defines a bounded, holomorphic function on $\ww C_{\neq0}\times\rho\ww D$.
Riemann's Theorem on removable singularities tells us that $\delta$
can be extended holomorphically to a bounded function on $\ww C\times\rho\ww D$,
which must be a function of $y$ only according to Liouville's theorem. 
\end{proof}

\subsection{Construction of a vector field with given sectorial transition maps }

~

Here we want to find a vector field $X_{R}$ with prescribed transition
maps between sectorial first-integrals. This construction is the core
of the proof for the orbital normal form reduction.
\begin{prop}
\label{prop:realization_transition_maps}Let an admissible collection
$\Delta$ and a collection $\varphi\in\mathcal{B}'\left(\Delta\right)$
be given. Then there exists $\left(\rho,N\right)$ adapted to $\Delta$
such that 
\begin{enumerate}
\item ~
\begin{eqnarray*}
H_{N}^{j+1} & = & H_{N}^{j}\exp\left(\frac{2\ii\pi\mu}{k}+\varphi^{j}\left(H_{N}^{j}\right)\right)\,,
\end{eqnarray*}

\item ~
\begin{eqnarray*}
\norm{y\frac{\partial N}{\partial y}}{} & < & 1\,,\norm{x\frac{\partial N}{\partial x}}{}<1.
\end{eqnarray*}

\end{enumerate}
\end{prop}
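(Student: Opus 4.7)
The plan is to reformulate property~(1) as a Cousin-type cohomological equation that Theorem~\ref{thm:Cauchy-Heine} solves via the refined Cauchy-Heine transform, and then close the loop with Banach's fixed-point theorem. On each overlap $\mathcal{V}_\rho^{j,s}$ the branches of $\log x$ used in the definitions of $H_N^j$ and $H_N^{j+1}$ coincide, so a direct computation yields
\[
H_N^{j+1}/H_N^j=\exp\left(2\ii\pi\mu/k+N^{j+1}-N^j\right).
\]
Property~(1) is therefore equivalent, modulo an integer multiple of $2\ii\pi$ absorbed by keeping $\|N^j\|$ small, to
\[
N^{j+1}-N^j=\varphi^j\circ H_N^j\qquad\text{on }\mathcal{V}_\rho^{j,s},\quad j\in\zsk.
\]
Whenever $(\rho,N)$ is adapted to $\Delta$, Theorem~\ref{thm:Cauchy-Heine} produces a solution, namely $\Sigma(N,\varphi)$. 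We are thus looking for a fixed point of the nonlinear operator $T\colon N\mapsto\Sigma(N,\varphi)$.

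I would work in the Banach space
\[
\mathcal{E}:=\Bigl\{N=(N^j)_{j\in\zsk}\,:\,N^j,\ x\tfrac{\partial N^j}{\partial x},\ y\tfrac{\partial N^j}{\partial y}\in\mathcal{B}(\mathcal{V}_\rho^j)\Bigr\}
\]
equipped with $\|N\|_{\mathcal{E}}:=\|N\|+\|x\partial N/\partial x\|+\|y\partial N/\partial y\|$, and fix $\eta\in(0,1)$. For $\rho$ small the formal first-integral satisfies $\sup_{\mathcal{V}_\rho^{j,s}}|H_0^j|=O(\rho)$---along $V^{j,s}$ the factor $\exp(1/(kx^k))$ decays as $x\to0$ while the spiraling outer boundary forced by~\eqref{eq:asymptotic_spiral_behavior} keeps $x^{-\mu}\exp(1/(kx^k))$ bounded as $|x|\to\infty$---and since $H_N=H_0\exp(N)$ this guarantees $H_N^j(\mathcal{V}_\rho^{j,s})\subset\Delta^j$ uniformly for $N$ in the closed ball $\overline{B}_{\mathcal{E}}(0,\eta)$, so $T$ is well-defined there. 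The estimates (5)(a)--(c) of Theorem~\ref{thm:Cauchy-Heine} give $\|T(N)\|_{\mathcal{E}}\le\rho\,C_1\,e^\eta(1+\|N\|_{\mathcal{E}})$, so shrinking $\rho$ stabilises the ball. For the Lipschitz bound, the mean value identity
\[
\varphi^j(H_{N_1}^j)-\varphi^j(H_{N_2}^j)=H_0^j\,(N_1^j-N_2^j)\int_0^1(\varphi^j)'(H_{N_s}^j)\exp(N_s^j)\,\dd s,\ \ N_s=sN_1+(1-s)N_2,
\]
retains the decisive factor $H_0^j$ that made the original Cauchy-Heine integrals converge at both ends of the spiraling contours, so feeding this difference through the very same kernel estimates yields $\|T(N_1)-T(N_2)\|_{\mathcal{E}}\le\rho\,C_2(\eta)\|N_1-N_2\|_{\mathcal{E}}$, a strict contraction after a final shrinkage of $\rho$. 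Banach's theorem produces the desired fixed point $N$; since $\eta<1$, property~(2) is automatic, and adaptedness was built in.

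The main obstacle is the derivative bookkeeping that forces the enlarged norm on $\mathcal{E}$. Differentiating the Cauchy-Heine integrand in $x$ worsens the kernel to $1/(z-x)^2$, handled in Theorem~\ref{thm:Cauchy-Heine}(5)(c) via a Cauchy-formula trick; meanwhile the derivatives of $\varphi(H_N)$ bring in $y\partial H_N/\partial y=H_N(1+y\partial N/\partial y)$ and $x\partial H_N/\partial x=H_N(-x^{-k}-\mu+x\partial N/\partial x)$, so the iteration cannot decouple $N$ from its logarithmic derivatives and the contraction estimate must be run in $\mathcal{E}$ rather than in a plain sup norm. Once this is accepted, the Picard iteration converges smoothly for $\rho$ small enough.
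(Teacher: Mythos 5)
Your reformulation of property~(1) as the Cousin relation $N^{j+1}-N^{j}=\varphi^{j}\circ H_{N}^{j}$ is correct, and choosing $\Sigma(N,\varphi)$ as the candidate is exactly the paper's set-up; but from there the two arguments diverge. The paper runs a plain Picard iteration $N_{0}=0$, $N_{n+1}=\Sigma(N_{n},\varphi)$, establishes \emph{uniform boundedness} of the iterates (not contraction), then invokes a Montel-type compactness argument (Lemma~\ref{lem:weak_bounded_conv}) together with convergence of the Taylor coefficients in the Krull topology --- the latter obtained by the recursion $N_{n+1}^{j}-N_{n}^{j}=O(y^{n+1})$ --- to pass to a limit fixed point; property~(2) is then extracted from the a priori inequalities (5)(b)--(c) via the scalar recursion $a_{n+1}\le\alpha(1+a_{n})$ with $\alpha<\tfrac12$. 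You instead enlarge the working space to $\mathcal{E}$ (adjoining $x\partial_x N$ and $y\partial_y N$ to the norm) and aim for a genuine contraction of $T=\Sigma(\cdot,\varphi)$ so that Banach's theorem applies, which additionally gives uniqueness of the fixed point --- a byproduct the paper never claims here. This is a legitimate alternative route, but it rests on a Lipschitz estimate for $\Sigma(\cdot,\varphi)$ in the $\mathcal{E}$-norm that Theorem~\ref{thm:Cauchy-Heine} does \emph{not} supply: the estimates (5)(a)--(c) control $\Sigma(N,f)$ in terms of $\norm{f}{}'$ and $\norm{f'}{}$, not the \emph{difference} $\Sigma(N_1,f)-\Sigma(N_2,f)$. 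Writing this difference via your mean-value identity already introduces $\varphi'$ in place of $\varphi$, and (as you note in the ``obstacle'' paragraph) the derivative parts of the $\mathcal{E}$-norm force a further layer: $y\partial_y$ hits $\varphi'(H_{N_1})-\varphi'(H_{N_2})$ and thus requires a uniform bound on $\varphi''$ on the relevant compact (available by Cauchy estimates since $H_{N}(\mathcal{V}_\rho^{j,s})$ sits well inside $\Delta$, but it needs to be said and threaded through the kernel estimates). So your plan is sound but you should spell out the new estimate
\[
\norm{\Sigma(N_{1},\varphi)-\Sigma(N_{2},\varphi)}{\mathcal{E}}\le\rho\,K'\bigl(\norm{\varphi'}{},\norm{\varphi''}{}\bigr)\,e^{\eta}\,\norm{N_{1}-N_{2}}{\mathcal{E}},
\]
which is extra work beyond the paper's Theorem~\ref{thm:Cauchy-Heine}. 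The trade-off is: the paper's route needs the auxiliary Lemma~\ref{lem:weak_bounded_conv} and the formal $O(y^{n+1})$ bookkeeping but only first-derivative bounds on $\varphi$; your route is shorter once the Lipschitz bound is proved and gives uniqueness, at the cost of second-derivative bounds on $\varphi$ and delicate coupling of the three norm components in $\mathcal{E}$.
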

\begin{rem}
A value for $\rho$ is given in the proof.
\end{rem}
This proposition relies on a general convergence result for sequences
in the space $\mathcal{B}\left(\Delta\right)$, which we have not
been able to find in the literature but should come in handy in many
problems where the most direct approach is formal.
\begin{lem}
\label{lem:weak_bounded_conv}Let $\Delta$ be a domain in $\ww C^{m}$
and consider a bounded sequence $\left(f_{p}\right)_{p\in\ww N}$
of $\mathcal{B}\left(\Delta\right)$ satisfying the additional property
that there exists some point $\mathbf{z}_{0}\in\Delta$ such that
the corresponding sequence of Taylor series $\left(T_{p}\right)_{p\in\ww N}$
at $\mathbf{z}_{0}$ is convergent in $\frml{\mathbf{z}-\mathbf{z}_{0}}$
equipped with the projective topology. Then $\left(f_{p}\right)_{p}$
converges uniformly on compact sets of $\Delta$ towards some $f_{\infty}\in\mathcal{B}\left(\Delta\right)$.\end{lem}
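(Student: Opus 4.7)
The plan is to combine Montel's theorem in several complex variables with the identity principle. The hypothesis gives pointwise (coefficient-wise) control at the base point, while boundedness gives compactness for the family; these two together force convergence of the whole sequence.

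First I would invoke Montel's theorem: the uniform bound $\sup_p \norm{f_p}{\Delta} < \infty$ makes $(f_p)_p$ a normal family on $\Delta$, so every subsequence admits a further subsequence converging, uniformly on compact subsets of $\Delta$, to a holomorphic limit $f_\infty \in \mathcal{O}(\Delta)$. By passing to the supremum in the bound one gets $\norm{f_\infty}{\Delta} \le \sup_p \norm{f_p}{\Delta} < \infty$, so in fact $f_\infty \in \mathcal{B}(\Delta)$.

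The next step is to identify all subsequential limits. Uniform convergence on a compact polydisc centered at $\mathbf{z}_0$ together with Cauchy's integral formula for the Taylor coefficients shows that if $f_{p_n} \to f_\infty$ uniformly on compact sets, then the Taylor series of $f_{p_n}$ at $\mathbf{z}_0$ converges to the Taylor series of $f_\infty$ at $\mathbf{z}_0$ in $\frml{\mathbf{z}-\mathbf{z}_0}$ with the projective (coefficient-wise) topology. But by hypothesis the full sequence $(T_p)_p$ already converges in this topology to some formal series $T_\infty$. Hence every subsequential limit $f_\infty$ has the same Taylor expansion $T_\infty$ at $\mathbf{z}_0$. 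Since $\Delta$ is a domain, i.e.\ open and connected, the identity principle forces any two such limits to coincide on all of $\Delta$.

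Finally, a standard subsequence argument upgrades this to convergence of the whole sequence: if $(f_p)$ did not converge to the unique limit $f_\infty$ uniformly on some compact $K\subset\Delta$, one could extract a subsequence bounded away from $f_\infty$ on $K$; Montel applied to that subsequence would yield a further subsequence converging uniformly on compact sets to some $\tilde f_\infty \in \mathcal{B}(\Delta)$, which by the previous paragraph must equal $f_\infty$, a contradiction. The only potential subtlety is the mild verification that the projective topology on $\frml{\mathbf{z}-\mathbf{z}_0}$ is indeed exactly coefficient-wise convergence and that Cauchy's estimates transfer convergence of holomorphic functions to convergence of each coefficient; both are elementary, so I do not expect a genuine obstacle, just careful bookkeeping of the multi-index notation in several variables.
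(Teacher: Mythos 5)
Your proof is correct and follows essentially the same route as the paper's: Montel's theorem (equivalently, the Montel-space property of $\mathcal{O}(\Delta)$) for normal-family compactness, Cauchy's integral formula to transfer locally uniform convergence into coefficient-wise convergence of Taylor series at $\mathbf{z}_0$, the identity principle on the connected domain $\Delta$ to pin down the unique subsequential limit, and the standard every-subsequence argument to upgrade to convergence of the full sequence. The only cosmetic difference is that you pass the bound to the limit a step earlier than the paper does; the substance is identical.
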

\begin{rem}
~
\begin{enumerate}
\item The convergence of the sequence of Taylor series $\left(T_{p}\right)_{p\in\ww N}=\left(\sum_{\mathbf{n}\geq0}a_{\mathbf{n}}^{\left(p\right)}\left(\mathbf{z}-\mathbf{z}_{0}\right)^{\mathbf{n}}\right)_{p\in\ww N}$
for the projective topology is equivalent to that of each sequence
$\left(a_{\mathbf{n}}^{\left(p\right)}\right)_{p\in\ww N}$ in $\ww C$.
This is particularly the case when $\left(T_{p}\right)_{p\in\ww N}$
converges for the Krull topology%
\footnote{The one based on the ideals generated by $\left(\mathbf{z}-\mathbf{z}_{0}\right)^{\mathbf{n}}$,
$\mathbf{n}\in\ww N^{m}$.%
} 
\item The convergence might not be uniform on $\Delta$: as an example take
$\Delta:=\left\{ z\in\ww C\,:\,\left|z\right|<1\right\} $ and $f_{p}\left(z\right):=z^{p}$.
\end{enumerate}
\end{rem}
\begin{proof}
Let $\mathcal{O}\left(\Delta\right)$ denote the space of functions
holomorphic on $\Delta$, equipped with the topology of uniform convergence
on compact subsets of $\Delta$, which is a Montel space. If the sequence
$\left(f_{p}\right)_{p}$ is bounded in $\mathcal{B}\left(\Delta\right)$
it is also bounded in $\mathcal{O}\left(\Delta\right)$. Consequently,
there exists a convergent subsequence $\left(f_{p_{j}}\right)_{j\in\ww N}$
in $\mathcal{O}\left(\Delta\right)$; call $f_{\infty}$ its limiting
value. Now Cauchy's integral formula and the uniform convergence $\left(f_{p_{j}}\right)_{j}\to f_{\infty}$
on a small compact polydisc around $\mathbf{z}_{0}$ imply that the
Taylor series of $f_{\infty}$ at $\mathbf{z}_{0}$ coincides with
the limiting power series $\lim_{p}T_{p}$. This argument, together
with the identity theorem on the connected open set $\Delta$, is
sufficient to prove that any other subsequence of $\left(f_{p}\right)_{p}$
converges toward the same function $f_{\infty}$. This implies the
convergence $\left(f_{p}\right)_{p}\to f_{\infty}$ in $\mathcal{O}\left(\Delta\right)$.
The boundedness of the sequence $\left(f_{p}\right)_{p}$in $\mathcal{B}\left(\Delta\right)$
implies that $f_{\infty}$ is bounded by the same constant on each
compact subset of $\Delta$, \emph{i.e.} it belongs to $\mathcal{B}\left(\Delta\right)$.
\end{proof}
We now give a proof of Proposition~\ref{prop:realization_transition_maps}.
\begin{proof}
We recursively define the sequence $\left(N_{n}\right)_{n\in\ww N}$:
starting with $N_{0}:=0$ we put
\begin{eqnarray}
N_{n+1} & := & \Sigma\left(N_{n},\varphi\right),\ n\geq0\label{eq:recursion_N}
\end{eqnarray}
using the refined Cauchy-Heine transform of Definition~\ref{def_RCH}
and Theorem~\ref{thm:Cauchy-Heine}. Then we show it converges in
a convenient Banach space. For the sake of clarity we omit the superscript
$j$ whenever not confusing, and write $H_{n}$ instead of $H_{N_{n}}$. 

We can assume that all $\varphi^{j}$ are holomorphic and have bounded
derivatives on some disc $\eta\ww D\subset\bigcap_{j\in\zsk}\Delta_{j}$.
Then we choose
\begin{eqnarray*}
\rho & \leq & \frac{\eta}{M}e^{-\frac{\eta}{M}K\norm{\varphi'}{\eta\ww D}}
\end{eqnarray*}
where 
\begin{eqnarray*}
M=M\left(k,\mu,\nu,\beta\right) & :=e^{2\pi\left|\mu\right|} & \sup_{z\in V^{j,s}}\left|z^{-\mu}\exp\frac{z^{-k}}{k}\right|
\end{eqnarray*}
and $K$ is the constant appearing in Theorem~\ref{thm:Cauchy-Heine}.
We need to ensure that 
\begin{eqnarray}
\left(\forall n\in\ww N\right)\,\left(\forall y\in\rho\ww D\right)\,\left(\forall z\in V^{j,s}\right) & \,\, & \left|H_{n}\left(z,y\right)\right|\leq\eta\,.\label{eq:H_in_disk}
\end{eqnarray}
By construction of $H_{n}$ we have for $\left(z,y\right)\in\mathcal{V}_{\rho}^{j}$
\begin{eqnarray*}
\left|H_{n}\left(z,y\right)\right| & \leq & \rho e^{2\pi\left|\mu\right|}\left|z^{-\mu}\exp\frac{z^{-k}}{k}\right|e^{\norm{N_{n}}{}}\,.
\end{eqnarray*}
Therefore if for some $n\in\ww N$ we have$\norm{N_{n}}{}\leq\frac{2\pi}{M}\eta K\norm{\varphi'}{\eta\ww D}$
then we first find that
\begin{eqnarray*}
\left|H_{n}\left(z,y\right)\right| & \leq & \rho M\exp\left(\frac{\eta}{M}K\norm{\varphi'}{\eta\ww D}\right)=\eta,
\end{eqnarray*}
\emph{i.e.} $\left(\rho,N_{n}\right)$ is adapted to $\Delta$ and
then, using Theorem~\ref{thm:Cauchy-Heine}~(5)(a), we obtain
\begin{eqnarray*}
\norm{N_{n+1}}{} & \leq & \rho K\norm{\varphi}{\eta\ww D}'\exp\norm{N_{n}}{}\\
 & \leq & \rho K\norm{\varphi'}{\eta\ww D}\exp\norm{N_{n}}{}\\
 & \leq & \frac{\eta}{M}K\norm{\varphi'}{\eta\ww D}\,.
\end{eqnarray*}
 These estimates show by induction on $n$ that, with the above choice
of $\rho$, the relation~(\ref{eq:recursion_N}) defines a bounded
sequence $\left(N_{n}\right)_{n}\subset\mathcal{B}\left(\mathcal{V}_{\rho}^{j}\right)$.
It is then sufficient to show that the sequence $\left(N_{n}\right)_{n}$
converges for the Krull topology on $\mathcal{B}\left(V^{j}\right)\left[\left[y\right]\right]$
to obtain its convergence towards an element $N$ of the Banach space
$\mathcal{B}\left(\mathcal{V}_{\rho}^{j}\right)$ (use Lemma~\ref{lem:weak_bounded_conv}).
By construction this limit is a fixed point of the operator $N\mapsto\Sigma\left(N,\varphi\right)$
(as it is continuous for the compact uniform convergence) and therefore
$N^{j+1}-N^{j}=\varphi^{j}\left(H_{N}^{j}\right)$, according to Theorem~\ref{thm:Cauchy-Heine}.
As an immediate consequence we obtain
\begin{eqnarray*}
\frac{H_{N}^{j+1}}{H_{N}^{j}} & = & \exp\left(\frac{2\ii\pi\mu}{k}+N^{j+1}-N^{j}\right)=\exp\left(\frac{2\ii\pi\mu}{k}+\varphi^{j}\left(H_{N}^{j}\right)\right)
\end{eqnarray*}
and thus we proved (1). 

Now the estimate~(5)(b) in Theorem~\ref{thm:Cauchy-Heine} implies,
for all $n\in\ww N$,
\begin{eqnarray*}
\frac{\norm{y\frac{\partial N_{n+1}}{\partial y}}{}}{1+\norm{y\frac{\partial N_{n}}{\partial y}}{}} & \leq & \rho K\norm{\varphi'}{}\exp\norm{N_{n}}{}\leq\rho K\norm{\varphi'}{}\exp\left(\frac{\eta}{M}K\norm{\varphi'}{}\right)\,.
\end{eqnarray*}
 If we choose $\rho$ so small that also $\rho K\norm{\varphi'}{}\exp\left(\frac{\eta}{M}K\norm{\varphi'}{}\right)=:\alpha<\frac{1}{2}$
then we have shown that $\norm{y\frac{\partial N_{n+1}}{\partial y}}{}\leq\alpha\left(1+\norm{y\frac{\partial N_{n}}{\partial y}}{}\right)$
for all $n$ and this implies that the limit $N$ satisfies $\norm{y\frac{\partial N}{\partial y}}{}\leq\frac{\alpha}{1-\alpha}<1.$
As the estimate for $\norm{x\frac{\partial N}{\partial x}}{}$ follows
in the same way, this establishes (2).

To complete the proof let us show by recursion on $n$ that $N_{n+1}^{j}-N_{n}^{j}=O\left(y^{n+1}\right)$
and hence that the sequence is convergent for the Krull topology.
By construction this is true for $n=0$. Now let us assume the property
is true for some $n$. Since
\begin{eqnarray*}
H_{n+1} & = & H_{n}\exp\left(N_{n+1}-N_{n}\right)=H_{n}\left(1+O\left(y^{n+1}\right)\right)
\end{eqnarray*}
and since $H_{n}=O\left(y\right)$, we find
\begin{eqnarray*}
\varphi\left(H_{n+1}\right) & = & \varphi\left(H_{n}\right)+O\left(y^{n+2}\right)\,.
\end{eqnarray*}
As the integral defining $\Sigma\left(N,\varphi\right)$ is $\ww C\left\{ y\right\} $-linear,
the result follows.\end{proof}
\begin{cor}
\label{cor:vector_field_prescribed_transitions}Let $\rho>0$ and
$N=\left(N^{j}\right)_{j}\in\mathcal{B}\left(\left(\mathcal{V}_{\rho}^{j}\right)_{j}\right)$
be given by Proposition~\ref{prop:realization_transition_maps}.
Then:
\begin{enumerate}
\item the vector fields 
\begin{eqnarray*}
X^{j} & := & X_{0}-y\frac{X_{0}\cdot N^{j}}{1+y\frac{\partial N^{j}}{\partial y}}\pp y
\end{eqnarray*}
are holomorphic on $\mathcal{V}_{\rho}^{j}$ and admit $H_{N}^{j}$
as first integrals, 
\item these vector fields $X^{j}$, for $j\in\zsk$, are the restrictions
to the sectors $\mathcal{V}_{\rho}^{j}$ of a vector field $X=X_{R}$
holomorphic on $\mathcal{V}_{\rho}$ with 
\begin{eqnarray*}
R & \in & y\mathcal{O}\left(\ww C\right)\left\{ y\right\} \,.
\end{eqnarray*}

\end{enumerate}
\end{cor}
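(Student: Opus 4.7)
The plan for item (1) is straightforward. The estimate $\norm{y \partial_y N^j}{} < 1$ from Proposition~\ref{prop:realization_transition_maps}(2) guarantees that $1 + y \partial_y N^j$ is nowhere zero on $\mathcal{V}_\rho^j$, so each $X^j$ is well-defined and holomorphic there. For the first integral property, I would verify $X^j \cdot H_N^j = 0$ by direct computation: writing $H_N^j = H_0 \cdot e^{N^j}$ with $H_0(x,y) := y e^{2\ii\pi j \mu/k}\exp(1/(kx^k) - \mu \log x)$, which satisfies $X_0 \cdot H_0 = 0$, one finds after a short manipulation that $X^j \cdot N^j = \frac{X_0 \cdot N^j}{1 + y \partial_y N^j}$ and $X^j \cdot H_0 = -H_0 \frac{X_0 \cdot N^j}{1 + y \partial_y N^j}$, so the two contributions cancel in $X^j \cdot (H_0 e^{N^j})$. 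Conceptually, this expresses that $(x,y) \mapsto (x, y e^{N^j})$ conjugates $X^j$ to $X_0$.

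For item (2), I would proceed in three steps: gluing on $\ww C_{\neq 0}$, extension across $\{x=0\}$, and identification of the normal form. To show $X^j = X^{j+1}$ on the overlap $\mathcal{V}_\rho^{j,s}$, note that both have the same $\partial_x$-component $x^{k+1}$. By Proposition~\ref{prop:realization_transition_maps}(1), $H_N^{j+1} = \psi(H_N^j)$ with $\psi$ invertible, hence $X^{j+1} \cdot H_N^j = 0$ as well. Since $\partial_y H_N^j = E\, e^{N^j}(1 + y \partial_y N^j)$ is nonvanishing, the $\partial_y$-component of any vector field of the form $x^{k+1}\partial_x + B\partial_y$ annihilating $H_N^j$ is uniquely determined by $B = -x^{k+1}\partial_x H_N^j / \partial_y H_N^j$, which forces $X^j = X^{j+1}$ on the overlap. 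This glues the $X^j$ to a holomorphic vector field $X$ on $\ww C_{\neq 0} \times \rho \ww D$.

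To extend $X$ across $\{x = 0\}$, I would invoke Riemann's removable singularity theorem, for which it suffices that the $\partial_y$-component $B$ be locally bounded near $\{x=0\}$. Writing $X_0 \cdot N^j = x^k(x\partial_x N^j) + y(1+\mu x^k)\partial_y N^j$ and using the bounds $\norm{x\partial_x N^j}{}, \norm{y\partial_y N^j}{} < 1$ of Proposition~\ref{prop:realization_transition_maps}(2), one immediately gets a uniform bound on each sectorial expression for $B$, hence on a full neighborhood of $\{x=0\}$ in $\mathcal{V}_\rho$ since the sectors cover $\ww C_{\neq 0}$. Riemann then gives a holomorphic extension of $X$ to $\mathcal{V}_\rho$.

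It remains to identify $X$ with some $X_R$ where $R \in y \mathcal{O}(\ww C)\{y\}$; this is the main obstacle. Since $\varphi^j \in \mathcal{B}'(\Delta)$ and $H_N^j(x,0)=0$, the Cauchy-Heine integrand vanishes at $y=0$, so $N^j(x,0) = 0$ for all $x \in V^j$. This yields $B(x,0) = 0$ and, using $\partial_x N^j(x,0) = 0$, the identity $\partial_y B(x,0) = 1 + \mu x^k$. The crucial remaining step is to prove $B(0, y) = y$, which is what forces $B - y(1+\mu x^k)$ to be divisible by $xy$ in $\mathcal{O}(\mathcal{V}_\rho)$. For this I would exploit the sharper estimate $|\Sigma(x,y)| \leq C|x|$ implicit in the proof of Theorem~\ref{thm:Cauchy-Heine}(3), which propagates to the fixed point $N^j$ and gives $|N^j(x,y)| \leq C|x|$ on $\mathcal{V}_\rho^j$. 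Cauchy's integral formula in $y$ then yields $\partial_y N^j = O(|x|)$ uniformly on $(\rho-\varepsilon)\ww D$, while $|x^{k+1}\partial_x N^j| \leq |x|^k \norm{x\partial_x N^j}{} = O(|x|^k)$; together these show $X_0 \cdot N^j \to 0$ as $x \to 0$, hence $B(x, y) \to y$. Setting $R := \frac{B - y(1+\mu x^k)}{xy}$ then defines an element of $y\mathcal{O}(\ww C)\{y\}$ realizing $X = X_R$.
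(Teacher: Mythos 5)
Your proof is correct and follows essentially the same route as the paper's. The paper also establishes gluing via the first-integral (it computes $X^{j}\cdot H_{N}^{j+1}$ in two ways to force $\tilde{R}^{j}=\tilde{R}^{j+1}$; your uniqueness-of-the-$\partial_{y}$-component argument is the same fact phrased slightly differently), and it also extends across $\left\{ x=0\right\} $ by Riemann using the bounds on $x\partial_{x}N^{j}$ and $y\partial_{y}N^{j}$. For the final divisibility by $x$, the paper simply invokes $N^{j}\to0$ as $V^{j}\ni x\to0$ and leaves the Cauchy-estimate step (to pass from $N^{j}\to0$ to $\partial_{y}N^{j}\to0$ and hence $\tilde{R}^{j}\to0$) implicit; you instead propagate the sharper estimate $\left|\Sigma\right|\le C\left|x\right|$ from the proof of Theorem~\ref{thm:Cauchy-Heine}(3) through the fixed-point iteration to get $\left|N^{j}\right|\le C\left|x\right|$, which is a valid and slightly more quantitative way to reach the same conclusion.
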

\begin{proof}
Define 
\begin{eqnarray*}
\tilde{R}^{j} & := & -\frac{X_{0}\cdot N^{j}}{1+y\frac{\partial N^{j}}{\partial y}}
\end{eqnarray*}
so that $X^{j}=X_{0}+y\tilde{R}^{j}\pp y$. Since $\norm{y\frac{\partial N}{\partial y}}{}<1$
and $\norm{x\frac{\partial N}{\partial x}}{}<1$, we indeed have $\tilde{R}^{j}\in\mathcal{B}\left(\mathcal{V}_{\rho}^{j}\right)$.
Because of Riemann's Theorem on removable singularities, each $\tilde{R}^{j}$
is the restriction of a function $\tilde{R}\in y\mathcal{O}\left(\ww C\right)\left\{ y\right\} $
if, and only if, $\tilde{R}^{j}=\tilde{R}^{j+1}$ on $\mathcal{V}_{\rho}^{j,s}$
for all $j$. This condition is satisfied because of point (1) of
Proposition \ref{prop:realization_transition_maps}. Indeed on the
one hand we have 
\begin{eqnarray*}
X^{j}\cdot H_{N}^{j+1} & = & X^{j}\cdot\left(H_{N}^{j}\exp\left(\frac{2i\pi\mu}{k}+\varphi\left(H_{N}^{j}\right)\right)\right)=0,
\end{eqnarray*}
because $X_{j}\cdot H_{N}^{j}=0$, on the other hand a short calculation
shows that
\begin{eqnarray*}
X^{j}\cdot H_{N}^{j+1} & = & H_{N}^{j+1}\left(X_{0}\cdot N^{j+1}+\left(1+y\frac{\partial N^{j+1}}{\partial y}\right)\tilde{R}^{j}\right)\,.
\end{eqnarray*}
Hence $X_{0}\cdot N^{j+1}+\left(1+y\frac{\partial N^{j+1}}{\partial y}\right)\tilde{R}^{j}=0$
and thus $\tilde{R}^{j}=\tilde{R}^{j+1}$.

Since all $N^{j}\left(x,y\right)$ tend to $0$ as $V^{j}\ni x\to0$
(uniformly for small $y$) we conclude that $\tilde{R}=xR$ with some
$R\in y\mathcal{O}\left(\ww C\right)\left\{ y\right\} $.\end{proof}
\begin{rem}
\label{rem_orbital_realization}~
\begin{enumerate}
\item The final formula is 
\begin{eqnarray*}
R & := & -\frac{X_{0}\cdot N^{j}}{x\left(1+y\frac{\partial N^{j}}{\partial y}\right)}\,,
\end{eqnarray*}
which does not depend on $j$ as stated in the above corollary.
\item $X$ is simply obtained by performing the change of variables $\left(x,y\right)\mapsto\left(x,y\exp N^{j}\left(x,y\right)\right)$
in $X_{0}$. Thus $H_{0}$ is naturally transformed into $H_{N^{j}}$.
Hence the relations
\begin{eqnarray*}
X\cdot N^{j} & = & -xR
\end{eqnarray*}
hold on the sectors and, by the definition of $\per R$ in subsection
1.2.4, we obtain
\begin{eqnarray*}
\left(\varphi^{j}\right)_{j\in\zsk} & = & \per R\left(-xR\right)\,.
\end{eqnarray*}

\item Point~(1) of Proposition~\ref{prop:realization_transition_maps}
states precisely that the Martinet-Ramis modulus of $X$ is 
\begin{eqnarray*}
\psi^{j}\left(h\right) & = & h\exp\left(\frac{2\ii\pi\mu}{k}+\varphi^{j}\left(h\right)\right)\,.
\end{eqnarray*}

\end{enumerate}
\end{rem}
We want to show that $R$ belong to $\polg=y\pol x{<k}\left\{ y\right\} $
but we have only proved so far that $R\in y\mathcal{O}\left(\ww C\right)\left\{ y\right\} $.
We complete the construction of our analytic orbital normal form by
proving that claim.
\begin{lem}
\label{lem:perturb_is_poly}The function $R$ of Corollary \ref{cor:vector_field_prescribed_transitions}
satisfies $R\in\polg$.\end{lem}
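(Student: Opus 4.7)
The plan is to extract growth information from the explicit formula for $R$ and then apply Liouville's theorem coefficient-by-coefficient. Recall that Corollary~\ref{cor:vector_field_prescribed_transitions} and Remark~\ref{rem_orbital_realization} give the formula
\begin{eqnarray*}
xR & = & -\frac{X_{0}\cdot N^{j}}{1+y\frac{\partial N^{j}}{\partial y}}\ =\ -\frac{x^{k}\left(x\frac{\partial N^{j}}{\partial x}\right)+\left(1+\mu x^{k}\right)\left(y\frac{\partial N^{j}}{\partial y}\right)}{1+y\frac{\partial N^{j}}{\partial y}}
\end{eqnarray*}
valid on each sectorial domain $\mathcal{V}_{\rho}^{j}$. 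The function $R$ is already known to belong to $y\mathcal{O}\left(\ww C\right)\left\{ y\right\}$, so expanding
\begin{eqnarray*}
R\left(x,y\right) & = & \sum_{n>0}R_{n}\left(x\right)y^{n}
\end{eqnarray*}
gives entire functions $R_{n}\in\mathcal{O}\left(\ww C\right)$. The goal reduces to showing that each $R_{n}$ has degree at most $k-1$.

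First I would use the estimates provided by the previous results to control $xR$ sectorwise. By Proposition~\ref{prop:realization_transition_maps}~(2) we have $\norm{y\partial_{y}N^{j}}{}<1$ and $\norm{x\partial_{x}N^{j}}{}<1$ on $\mathcal{V}_{\rho}^{j}$, so the denominator is bounded below (say by $\tfrac{1}{2}$) while the numerator is at most $C(1+\left|x\right|^{k})$ for some constant $C$ depending only on $\mu$ and the bounds on $N$. Consequently
\begin{eqnarray*}
\left|R\left(x,y\right)\right| & \leq & C'\left(1+\left|x\right|^{k-1}\right)\quad\text{on }\mathcal{V}_{\rho}^{j}
\end{eqnarray*}
uniformly in $j\in\zsk$ and in $\left|y\right|\leq\rho'$ for any $\rho'<\rho$. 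Since the sectors $\left(V^{j}\right)_{j\in\zsk}$ cover $\ww C_{\neq0}$ and $R$ is entire in $x$ for each fixed $y$, this estimate holds on all of $\ww C\times\rho'\ww D$.

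Next I would use Cauchy's integral formula in the variable $y$: for $x\in\ww C$ and $n>0$,
\begin{eqnarray*}
R_{n}\left(x\right) & = & \frac{1}{2\ii\pi}\oint_{\left|y\right|=\rho'}\frac{R\left(x,y\right)}{y^{n+1}}\dd y\,,
\end{eqnarray*}
hence $\left|R_{n}\left(x\right)\right|\leq C'\rho'^{-n}\left(1+\left|x\right|^{k-1}\right)$. Each $R_{n}$ is therefore an entire function on $\ww C$ of polynomial growth of degree at most $k-1$, so by Liouville's theorem $R_{n}\in\pol x{<k}$. This gives exactly $R\in\polg$, proving the lemma.

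The only subtle point is the transition from sectorwise bounds to a global bound on $\ww C$; this works precisely because $R$ is already known to be entire in $x$ (so no Phragm�n-Lindel�f type argument is required), and because the uniform bound is the same on every sector. The computation of the numerator bound is entirely routine given the bounds on the first derivatives of $N^{j}$ already established in Proposition~\ref{prop:realization_transition_maps}.
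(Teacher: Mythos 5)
Your proof is correct and takes essentially the same approach as the paper: both use the bounds $\norm{x\partial_x N^j}{}<1$ and $\norm{y\partial_y N^j}{}<1$ from Proposition~\ref{prop:realization_transition_maps}~(2) to conclude $xR=O(x^k)$ on the sectors, then invoke entireness in $x$ plus polynomial growth (Liouville) to get degree $\le k-1$; you merely spell out the final step coefficient-by-coefficient via Cauchy's formula in $y$ where the paper argues directly for each fixed $y$. (Minor slip: the denominator $1+y\partial_y N^j$ is bounded below by $1-\norm{y\partial_y N^j}{}>0$, not necessarily by $\tfrac12$, but that does not affect the argument.)
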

\begin{proof}
By Proposition~\ref{prop:realization_transition_maps}, we have
\begin{eqnarray*}
\norm{x\frac{\partial N}{\partial x}}{}<1 & \,\,\,\,\,\mbox{and}\,\,\,\,\, & \norm{y\frac{\partial N}{\partial y}}{}<1\,.
\end{eqnarray*}
and thus 
\begin{eqnarray*}
\frac{1}{\norm{1+y\frac{\partial N}{\partial y}}{}} & \leq & \frac{1}{1-\norm{y\frac{\partial N}{\partial y}}{}}<\infty\,.
\end{eqnarray*}
Since
\begin{eqnarray*}
\left|\left(X_{0}\cdot N^{j}\right)\left(x,y\right)\right| & \leq & \left|x\right|^{k}\norm{x\frac{\partial N}{\partial x}}{}+\left(1+\left|\mu x^{k}\right|\right)\norm{y\frac{\partial N}{\partial y}}{}
\end{eqnarray*}
for $\left(x,y\right)\in\mathcal{V}_{\rho}$ , $j\in\zsk,$ we conclude
from the definition of $R$ that
\begin{eqnarray*}
xR & = & O\left(x^{k}\right)\,.
\end{eqnarray*}
As $x\mapsto R\left(x,y\right)$ is an entire function for every fixed
$\left|y\right|<\rho$, it must be a polynomial of degree at most
$k-1$. 
\end{proof}

\subsection{Uniqueness}

~

In order to complete the proof of the orbital part of the Main Theorem
we only need to address the uniqueness clause.
\begin{prop}
\label{pro:uniqueness}Let an orbital formal class be fixed. Two vector
fields $X_{R}$ and $X_{\tilde{R}}$ with $R,\,\tilde{R}\,\in\polg$
are analytically orbitally conjugate by some $\Psi$ in a neighborhood
of $0\in\ww C^{2}$ if, and only if, there exists $\left(\theta,c\right)\in{\tt Aut}_{k}$
such that
\begin{eqnarray*}
\tilde{R}\left(x,y\right) & = & R\left(e^{\nf{2\ii\pi\theta}k}x,cy\right)\,.
\end{eqnarray*}
In that case there exists $T\in\germ{x,y}$ such that
\begin{eqnarray*}
\Psi\left(x,y\right) & = & \left(\flow{X_{R}}T{}\right)\circ\left(e^{\nf{2\ii\pi\theta}k}x,cy\right)\,,
\end{eqnarray*}
where $\flow{X_{R}}t{}$ is the flow of $X_{R}$ at time $t$.\end{prop}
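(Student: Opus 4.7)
The plan is to establish the two directions of the equivalence while identifying the precise form of the conjugating map.

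\emph{Necessity.} Suppose $\Psi$ is an analytic orbital conjugacy satisfying $\Psi^{*}X_{R}=U\cdot X_{\tilde{R}}$ for some non-vanishing germ $U$. I would first analyze the linearization $d\Psi(0)$. Both $X_{R}$ and $X_{\tilde{R}}$ admit $\{y=0\}$ as their unique (convergent) formal separatrix tangent to the zero eigendirection, and $\{x=0\}$ as the strong separatrix tangent to the non-zero eigendirection; since these invariant curves are canonically associated to the saddle-node structure, $\Psi$ must preserve each of them setwise. This forces $d\Psi(0)=\tx{diag}(\alpha,c)$ with $\alpha,c\in\ww C_{\neq0}$. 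The formal conjugacy $\Psi$ between $X_{R}$ and $X_{\tilde{R}}$, both of which are formally orbitally equivalent to the unique formal model $X_{0}$, then induces a formal orbital self-symmetry of $X_{0}$; analyzing the formal centralizer of $X_{0}$ (the symmetries of its standard formal normal form decompose into flows of $X_{0}$ composed with linear scalings), one finds $\alpha^{k}=1$, so that $\phi_{\theta,c}(x,y):=(\alpha x,cy)$ with $\alpha=e^{\nf{2\ii\pi\theta}k}$ belongs to $\tx{Aut}_{k}$.

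Having pinned down $(\theta,c)$, I would set $\Psi_{1}:=\Psi\circ\phi_{\theta,c}^{-1}$, obtaining an orbital conjugacy tangent to the identity between $\phi_{\theta,c}^{*}X_{\tilde{R}}$ and $X_{R}$. A direct computation shows $\phi_{\theta,c}^{*}X_{\tilde{R}}$ is again in normal form, with sectorial first-integrals obtained from those of $X_{\tilde{R}}$ by the cyclic relabeling $j\mapsto j-\theta$ combined with the rescaling $h\mapsto ch$. Since $\Psi_{1}$ is tangent to the identity and both vector fields are in orbital normal form, their sectorial first-integral systems must coincide up to the flow of $X_{R}$: indeed, by the uniqueness clause of Theorem~\ref{thm:Cauchy-Heine}~(4) invoked within Proposition~\ref{prop:realization_transition_maps}, the sectorial first-integrals associated with a given orbital modulus are determined uniquely up to the action of flows of $X_{R}$. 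Therefore $\Psi_{1}=\flow{X_{R}}{T}{}$ for some germ $T\in\germ{x,y}$, forcing the underlying $R$-data to match the prescribed form $\tilde{R}(x,y)=R(\alpha x,cy)$.

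\emph{Sufficiency.} Conversely, if $\tilde{R}(x,y)=R(\alpha x,cy)$ with $\alpha=e^{\nf{2\ii\pi\theta}k}$, I would verify that $\Psi=\flow{X_{R}}{T}{}\circ\phi_{\theta,c}$ is the desired orbital conjugacy for a suitable $T$. A direct pullback calculation shows that $\phi_{\theta,c}^{*}X_{R}$ shares the underlying foliation with $X_{\tilde{R}}$ (both being saddle-nodes in normal form whose orbital moduli are $\tx{Aut}_{k}$-equivalent, hence their sectorial first-integrals coincide on overlaps up to the prescribed $(\theta,c)$-action); any residual scalar discrepancy encodes a time reparametrization of the foliation and is precisely compensated by composing with the time-$T$ flow of $X_{R}$. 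The germ $T$ is obtained by solving a cohomological equation in the spirit of~\eqref{eq:cohomo-equa}, with solvability ensured because the relevant period obstruction vanishes when the orbital moduli are equivalent.

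The main obstacle is the lifting argument in the necessity direction: promoting the formal decomposition $\Psi\sim\flow{X_{R}}{T}{}\circ\phi_{\theta,c}$ into a genuinely analytic identity on a neighborhood of $0\in\ww C^{2}$. The key technical input is the uniqueness part of Theorem~\ref{thm:Cauchy-Heine}~(4), which controls precisely the freedom in the sectorial construction of normal forms and thus allows one to conclude that all discrepancies between $\Psi_{1}$ and the identity are captured by a single flow term $T\in\germ{x,y}$.
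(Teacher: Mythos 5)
Your proof has a genuine gap at the crux of the necessity direction. The paper's argument pivots on the injectivity of the restricted modulus map
\[
R\in\polg\;\longmapsto\;\bigl(\varphi_R^j\bigr)_{j\in\zsk}=\per R\bigl(-xR\bigr),
\]
a fact recorded as Corollary~\ref{cor:orbit_invertible} and proved in Section~\ref{sec:Explicit} via the block-triangular structure of the system~(\ref{eq:system}) (see also the non-vanishing of the diagonal coefficients $c_{m,n,n}^j$ in~(\ref{eq:period-coef-1})). This is the step that lets one pass from ``the Martinet--Ramis moduli of $X_R$ and $X_{\tilde R}$ are $\tt{Aut}_k$-equivalent'' to ``$R$ and $\tilde R$ themselves are related by a linear rotation/scaling.'' Once that is known, the residual conjugacy is a self-symmetry of the fixed foliation $\mathcal{F}_{X_R}$ and can be written as a time-$T$ flow by the cohomological-equation argument of Section~\ref{sub:tempo_modulus} and a formal analysis reducing to the centralizer of $X_0$.

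Your replacement for this step does not work. You invoke Theorem~\ref{thm:Cauchy-Heine}~(4) to claim that ``the sectorial first-integrals associated with a given orbital modulus are determined uniquely up to flows of $X_R$,'' and thereby deduce both that $\Psi_1$ is a flow and that the $R$-data agree. But Theorem~\ref{thm:Cauchy-Heine}~(4) is a uniqueness statement for the \emph{linear} Cousin/coboundary problem $\Sigma^{j+1}-\Sigma^j=f^j\circ H_N^j$ for a \emph{fixed} $N$; it says two bounded solutions differ by a single function $F(y)$. This says nothing about uniqueness of the \emph{nonlinear} fixed-point $N=\Sigma(N,\varphi)$ of Proposition~\ref{prop:realization_transition_maps}, nor is ``add $F(y)$ to all $N^j$'' compatible with preserving the transition relation $N^{j+1}-N^j=\varphi^j(H_N^j)$, since the right-hand side depends on $N$ through $H_N$. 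Concretely, if two $R,\tilde R\in\polg$ happened to produce the same $\varphi$, your argument as written would not rule this out; you implicitly use exactly the injectivity you never prove. The order of logic must be: first deduce equality of the normal-form data (injectivity), then conclude the remaining tangent-to-identity conjugacy is a flow; your proof inverts this and consequently begs the question.

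Two smaller remarks. First, the sufficiency paragraph is also circular as written (``any residual scalar discrepancy \dots is precisely compensated by composing with the time-$T$ flow''): the paper handles this cleanly by exhibiting the cohomological solution $T$ with $X_R\cdot T=\frac1U-1$ and $T(0,0)=0$. Second, an elementary pullback check shows $\phi_{\theta,c}^*X_R=X_{R'}$ with $R'(x,y)=e^{2\ii\pi\theta/k}\,R\bigl(e^{2\ii\pi\theta/k}x,cy\bigr)$; the extra $k$-th root of unity is worth tracking carefully when you verify the stated relation between $R$ and $\tilde R$.
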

\begin{proof}
We use a fact proved later in Corollary~\ref{cor:orbit_invertible}:
the map $R\in\polg\mapsto\left(\varphi_{R}^{j}\right)_{j\in\zsk}$,
sending $R$ to the canonical orbital modulus $\per R\left(-xR\right)$
of $X_{R}$, is one-to-one. According to Martinet-Ramis' theorem there
must exist $\left(\theta,c\right)\in{\tt Aut}_{k}$ such that $\varphi_{R}^{j+\theta}\left(ch\right)=\varphi_{\tilde{R}}^{j}\left(h\right)$.
Up to right-composition of $\Psi$ by $\left(x,y\right)\mapsto\left(e^{-\nf{2\ii\pi\theta}k}x,c^{-1}y\right)$
we may therefore assume that $\Psi$ is tangent to the identity and
$\varphi_{R}^{j}=\varphi_{\tilde{R}}^{j}$, so that $R=\tilde{R}$.
We are left with studying the tangent-to-the-identity symmetries of
the foliation induced by $X_{R}$. We have $\Psi^{*}X_{R}=UX_{R}$
for some holomorphic unit $U$ and there exists a holomorphic $T$
such that $T\left(0,0\right)=0$ and $X_{R}\cdot T=\frac{1}{U}-1$
(see Section~\ref{sub:tempo_modulus}). Now up to composition of
$\Psi$ by the inverse of $\flow{X_{R}}T{}$, we are left with studying
the tangent-to-the-identity symmetries of the vector field $X_{R}$.
This can be carried out on a formal level, thereby for $X_{0}$: it
is easy to show that such a formal symmetry of $X_{0}$ must be of
the form $\flow{X_{0}}t{}$ for some $t\in\ww C$, which ends the
proof.
\end{proof}

\section{\label{sec:tempo-NF}The natural section of the period operator and
temporal normal forms}

We start with some vector field $X_{R}$ constructed in the previous
section with prescribed orbital modulus, holomorphic on the domain
$\mathcal{V}_{\rho}=\ww C\times\rho\ww D$ for some well-chosen $\rho>0$,
as described in Proposition~\ref{prop:realization_transition_maps}.
Consider the admissible collection $\Delta$ defined by $\Delta^{j}:=H_{N}\left(\mathcal{V}_{\rho}^{j}\right)$,
whose size shrinks as $\rho$ goes to $0$. We refer to Section~\ref{sub:tempo_modulus}
for the construction of the period operator $\per R$, and the justification
that the Main~Theorem is proved once we have established the following
result.
\begin{prop}
\label{prop:natural_section}Let a collection $f=\left(f^{j}\right)\in\mathcal{B}\left(\Delta\right)$
be given. Then there exists a unique $\sec R\left(f\right)\in x\polg$
such that
\begin{eqnarray*}
\per R\circ\sec R\left(f\right) & = & f\,.
\end{eqnarray*}

\end{prop}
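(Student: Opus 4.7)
The plan is to produce $\sec R(f)$ explicitly by applying the refined Cauchy-Heine transform of Theorem~\ref{thm:Cauchy-Heine} to $f$ and then taking the Lie derivative along $X_{R}$. Since every $f^{j}$ belongs to $h\germ h$, we have $f\in\mathcal{B}'(\Delta)$, so Theorem~\ref{thm:Cauchy-Heine} applies with the pair $(\rho,N)$ adapted to $\Delta$ coming from Proposition~\ref{prop:realization_transition_maps}. Setting $T^{j}:=\Sigma(N,f)^{j}$ yields sectorial functions $T^{j}\in\mathcal{B}(\mathcal{V}_{\rho}^{j})$ satisfying $T^{j+1}-T^{j}=f^{j}\circ H_{N}^{j}$ and $T^{j}(x,y)\to0$ as $V^{j}\ni x\to0$. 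Define $g^{j}:=X_{R}\cdot T^{j}$; because each $H_{N}^{j}$ is a first integral of $X_{R}$, the differences $T^{j+1}-T^{j}$ lie in the kernel of $X_{R}\cdot$, so $g^{j+1}=g^{j}$ on $\mathcal{V}_{\rho}^{j,s}$, and the $g^{j}$ glue into a function holomorphic on $\mathcal{V}_{\rho}\setminus\{x=0\}$. Estimate~(5)(c) of Theorem~\ref{thm:Cauchy-Heine} gives $|x^{k+1}\partial_{x}T^{j}|\leq|x|^{k}\cdot\mathrm{const}$, so $g$ is bounded near $\{x=0\}$ and extends across it by Riemann's theorem; by construction $\per R(g)=f$.

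The next step is to check that $g$ lies in $x\polg$, that is, $g(x,y)=\sum_{n\geq1}g_{n}(x)\,y^{n}$ with $g_{n}\in x\pol x{<k}$. Vanishing along $\{y=0\}$ follows from $f^{j}(0)=0$ combined with $H_{N}^{j}\equiv0$ on $\{y=0\}$, which forces $T^{j}(x,0)=0$ and hence $g(x,0)=0$. Vanishing along $\{x=0\}$ follows from $T^{j}(0,y)=0$ (property~(3) of Theorem~\ref{thm:Cauchy-Heine}) together with the decay of $x^{k+1}\partial_{x}T^{j}$ at $x=0$. For the degree bound I rewrite
\begin{equation*}
g \;=\; x^{k}\bigl(x\tfrac{\partial T^{j}}{\partial x}\bigr) \;+\; \bigl(1+\mu x^{k}+xR\bigr)\bigl(y\tfrac{\partial T^{j}}{\partial y}\bigr)
\end{equation*}
and invoke Lemma~\ref{lem:perturb_is_poly}: $R\in\polg$ implies $|xR|\leq C(1+|x|^{k})$ uniformly on $\mathcal{V}_{\rho}$. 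Combined with the estimates~(5)(b) and~(5)(c) of Theorem~\ref{thm:Cauchy-Heine} bounding $y\partial_{y}T^{j}$ and $x\partial_{x}T^{j}$, this gives $|g(x,y)|\leq C(1+|x|^{k})$ on some $\mathcal{V}_{\rho'}$. A Cauchy inequality in the $y$-variable on a circle inside $\rho'\ww D$ then forces $|g_{n}(x)|=O(1+|x|^{k})$; each $g_{n}$ being entire, Liouville's theorem makes it a polynomial of degree at most $k$, and the vanishing at $x=0$ shows $g_{n}\in x\pol x{<k}$, hence $g\in x\polg$.

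For uniqueness, suppose $g\in x\polg$ satisfies $\per R(g)=0$. The exact sequence recalled in Section~\ref{sub:tempo_modulus} provides $F\in\germ{x,y}$, unique up to an additive constant, with $g=X_{R}\cdot F$. Its $y^{0}$-coefficient equation $x^{k+1}F_{0}'(x)=0$ forces $F_{0}$ to be constant, so one may assume $F\in y\germ{x,y}$, that is, $F=\sum_{n\geq1}F_{n}(x)y^{n}$. Since $xR\in x\polg$ has $y$-expansion starting at degree $\geq1$, the $y^{n}$-coefficient of $X_{R}\cdot F=0$ reduces, under the inductive hypothesis $F_{m}=0$ for $m<n$, to the homogeneous linear ODE $x^{k+1}F_{n}'+n(1+\mu x^{k})F_{n}=0$. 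Its general solution $F_{n}(x)=C\,x^{-n\mu}\exp(nx^{-k}/k)$ has an essential singularity at $x=0$ unless $C=0$, so $F_{n}=0$ for every $n$, giving $F=0$ and $g=0$.

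The delicate point, which I expect to be the main obstacle, is the polynomial-degree estimate in the existence argument: merely knowing that $T^{j}$ is bounded on its sector is not enough, and one cannot afford to lose any power of $x$. What rescues the argument is the full strength of estimates~(5)(b)--(5)(c) of Theorem~\ref{thm:Cauchy-Heine}, which provide simultaneous bounds on both partial derivatives of $T^{j}$, combined with Lemma~\ref{lem:perturb_is_poly} ensuring that the coupling term $xR\cdot y\partial_{y}T^{j}$ does not overshoot the target growth $O(|x|^{k})$ — a compatibility which is precisely what makes the orbital and temporal parts of the normal form fit together.
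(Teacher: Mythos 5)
Your existence argument follows the paper's proof almost exactly: apply the refined Cauchy--Heine transform, define $g:=X_R\cdot\Sigma^j$, glue over sectors since $H_N^j$ is a first integral, extend by Riemann, and obtain the degree bound in $x$ from estimates~(5)(b)--(c) of Theorem~\ref{thm:Cauchy-Heine} and Liouville's theorem. The paper compresses the degree bound into a reference to the proof of Lemma~\ref{lem:perturb_is_poly}, which you spell out; that part of your argument is sound.

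Your \emph{uniqueness} argument is where the paper says nothing and you supply your own reasoning, but it contains a genuine gap. You write ``the $y^n$-coefficient of $X_R\cdot F=0$'' and deduce the homogeneous ODE $x^{k+1}F_n'+n(1+\mu x^k)F_n=0$. But $F$ satisfies $X_R\cdot F=g$, not $X_R\cdot F=0$, so (under the inductive hypothesis $F_m=0$ for $m<n$) the $y^n$-coefficient equation is the \emph{inhomogeneous} ODE
\[
x^{k+1}F_n'(x)+n\bigl(1+\mu x^k\bigr)F_n(x)\;=\;g_n(x),\qquad g_n\in x\,\pol x{<k}\,,
\]
and divergence of the homogeneous solution $Cx^{-n\mu}\exp(nx^{-k}/k)$ does not by itself force $g_n=0$; it only fixes the constant $C$. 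What must be shown, and what you omit, is that this equation has a \emph{convergent} solution $F_n\in\ww C\{x\}$ only when $g_n=0$. This is true: equating Taylor coefficients $F_n=\sum_p a_p x^p$ gives $a_p=-\tfrac{n\mu+p-k}{n}\,a_{p-k}$ for $p>k$, with $n\,a_r=g_{r,n}$ for $1\le r\le k$, so the $a_{mk+r}$ grow factorially in $m$ unless $a_r=0$; convergence of $F_n$ therefore forces $g_{r,n}=0$ for all $r$. Without this (or the equivalent appeal to the block-triangular invertibility established in Proposition~\ref{pro:period_triangular} and Corollary~\ref{cor:orbit_invertible}), the inductive step does not close, because you cannot yet conclude $F_n=0$ from the inhomogeneous equation. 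Once this observation is inserted your uniqueness proof is correct and in fact more self-contained than what the paper supplies, which leaves uniqueness to be read off from the later Proposition~\ref{pro:period_triangular}.
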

Notice that in addition to the special form of $\sec R$ this proposition
ensures a control on the domain of definition of the normal form and,
more generally, on that of the sectorial solutions to a cohomological
equation.
\begin{proof}
Following Theorem~\ref{thm:Cauchy-Heine} and Definition~\ref{def_RCH}
we obtain sectorial functions $\left(\Sigma^{j}\right)_{j}:=\Sigma\left(N,f\right)$
such that 
\begin{eqnarray*}
\Sigma^{j+1}-\Sigma^{j} & = & f^{j}\circ H_{N}^{j}\,.
\end{eqnarray*}
Each function $\Sigma^{j}$ is holomorphic and bounded on $\mathcal{V}_{\rho}^{j}$
and tends to 0 as $V^{j}\ni x\to0$, uniformly on $\rho\ww D$. Define
now
\begin{eqnarray*}
g & := & X_{R}\cdot\Sigma^{j}\,,
\end{eqnarray*}
which does not depend on $j$ (because $H_{N}^{j}$ is a first integral
of $X_{R}$) and tends to 0 as $x\to0$. Therefore it can be extended
to a function holomorphic on $\mathcal{V}_{\rho}$ by Riemann's Theorem
on removable singularities. As in the proof of Lemma~\ref{lem:perturb_is_poly}
it follows that $g\in x\pol x{<k}\left\{ y\right\} $. 
\end{proof}
By extending the arguments of the proof of Proposition~\ref{pro:uniqueness}
we deduce easily the
\begin{cor}
\label{cor:uniqueness}Let an orbital formal class be fixed. Two vector
fields $U_{G}X_{R}$ and $U_{\tilde{G}}X_{\tilde{R}}$ with $R,\,\tilde{R},\, G,\,\tilde{G}\,\in\polg$,
having respective formal temporal moduli $P$ and $\tilde{P}$, are
analytically conjugate by some $\Psi$ in a neighborhood of $0\in\ww C^{2}$
if, and only if, there exists $\left(\theta,c\right)\in{\tt Aut}_{k}$
such that $\left(P,R,G\right)$ is conjugate to $\left(\tilde{P},\tilde{G},\tilde{R}\right)$
by the right-composition by $\left(x,y\right)\mapsto\left(e^{\nf{2\ii\pi\theta}k}x,cy\right)$.
In that case there exists $t\in\ww C$ such that
\begin{eqnarray*}
\Psi\left(x,y\right) & = & \flow{X_{R}}t{\left(e^{\nf{2\ii\pi\theta}k}x,cy\right)}\,.
\end{eqnarray*}

\end{cor}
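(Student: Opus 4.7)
The ``if'' direction is a straightforward check that the linear map $L(x,y):=(e^{\nf{2\ii\pi\theta}k}x,cy)$ pulls $U_{\tilde G}X_{\tilde R}$ back to $U_G X_R$ when the triples are $(\theta,c)$-conjugate; taking $\Psi=L$ (i.e.\ $t=0$) already realises the conjugacy. For the ``only if'' direction, the strategy mimics the proof of Proposition~\ref{pro:uniqueness}, inserting between its orbital reduction and its final formal rigidity step a temporal matching based on the uniqueness of the natural section $\sec R$ from Proposition~\ref{prop:natural_section}. Suppose $\Psi^*(U_G X_R)=U_{\tilde G}X_{\tilde R}$. Forgetting the multipliers, $\Psi$ is \emph{a fortiori} an orbital conjugacy of the foliations induced by $X_R$ and $X_{\tilde R}$, so Proposition~\ref{pro:uniqueness} provides $(\theta,c)\in{\tt Aut}_k$ such that $\tilde R$ is the corresponding $(\theta,c)$-twist of $R$, together with $T\in\germ{x,y}$ with $\Psi=\flow{X_R}{T}{}\circ L$. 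Replacing the target data by its $(\theta,c)$-twist and adjusting $\Psi$ accordingly, we reduce to $L=\id$, $\tilde R=R$, and $\Psi=\flow{X_R}{T}{}$ tangent to the identity.

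Analytic conjugacy preserves the formal conjugacy class, and since the orbital formal data now agree, the formal temporal moduli also coincide: $\tilde P=P$. Analytic conjugacy also preserves the temporal modulus. A direct calculation from the explicit shape of $U_G$ yields $\tfrac{1}{U_G}-\tfrac{1}{P}=xG\in x\polg$, and analogously $\tfrac{1}{U_{\tilde G}}-\tfrac{1}{P}=x\tilde G$; hence the equality of temporal moduli reads $\per R\bigl(x(G-\tilde G)\bigr)=0$. Proposition~\ref{prop:natural_section} asserts the uniqueness of the natural section, equivalently the injectivity of $\per R$ restricted to $x\polg$; therefore $G=\tilde G$, and consequently $U_G=U_{\tilde G}$.

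After these reductions the conjugacy reads $\Psi^*Z=Z$ for $Z:=U_G X_R$, so $\Psi=\flow{X_R}{T}{}$ is an analytic tangent-to-identity symmetry of $Z$. Following verbatim the last step of the proof of Proposition~\ref{pro:uniqueness}, we reduce to a formal analysis of the tangent-to-identity symmetries of the model $X_0$, which are precisely the flows $\flow{X_0}{t}{}$ for constant $t\in\ww C$; transporting this rigidity through the formal normalisation of $Z$ forces $T$ to be a constant, which yields the announced formula for $\Psi$. The main obstacle is precisely this final step: one must cut down the infinite-dimensional analytic family of orbital symmetries of $X_R$, parametrised by a holomorphic function, to the $1$-parameter group of genuine vector-field symmetries, a reduction enabled only by the formal rigidity of the saddle-node centraliser.
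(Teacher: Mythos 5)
Your argument is in substance the extension of Proposition~\ref{pro:uniqueness} that the paper invokes without detail, and the bulk of it is correct. The ``if'' direction is routine; the orbital reduction to $L=\id$, $\tilde R=R$, $\Psi=\flow{X_R}{T}{}$ via Proposition~\ref{pro:uniqueness} is exactly right; and the temporal matching is clean: $\frac{1}{U_G}-\frac{1}{P}=xG\in x\polg$, the invariance of the temporal modulus gives $\per R\bigl(x(G-\tilde G)\bigr)=0$, and the uniqueness clause of Proposition~\ref{prop:natural_section} (injectivity of $\per R$ on $x\polg$) forces $G=\tilde G$, $P=\tilde P$. This is precisely the extra ingredient needed beyond the orbital proposition, and you identified it correctly.

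There is, however, a gap in your final step. You write that ``the last step of the proof of Proposition~\ref{pro:uniqueness}'' reduces the problem to ``the tangent-to-identity symmetries of the model $X_0$,'' and deduce that the germ $T$ in $\Psi=\flow{X_R}{T}{}$ must be a constant. But after the reductions you are studying tangent-to-identity \emph{vector-field} symmetries of $Z=U_GX_R$, whose formal model is $PX_0$, not $X_0$. The formal tangent-to-identity centraliser of $PX_0$ (as a vector field) consists of the flows $\flow{PX_0}{s}{}$ for constant $s$; transporting back, one finds $\Psi=\flow{U_GX_R}{s}{}$ for constant $s$. Written as $\flow{X_R}{T}{}$ this has $T(p)=\int_0^s U_G\bigl(\flow{U_GX_R}{u}{p}\bigr)\,\dd u$, which is \emph{not} a constant unless $U_G$ is a first integral of $X_R$ (i.e.\ essentially $G=0$, $P$ constant). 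Concretely, $\flow{X_R}{t}{}^{*}\bigl(U_GX_R\bigr)=\bigl(U_G\circ\flow{X_R}{t}{}\bigr)X_R\neq U_GX_R$ for $t\neq0$, so $\flow{X_R}{t}{}\circ L$ at constant $t$ does not in fact conjugate $U_GX_R$ to $U_{\tilde G}X_{\tilde R}$. Thus ``transporting rigidity through the formal normalisation forces $T$ to be constant'' does not hold as stated; what the argument yields is $\Psi=\flow{U_GX_R}{t}{}\circ L$ (equivalently $\flow{Z}{t}{}\circ L$) for a constant $t$. This slip is arguably inherited from the displayed formula in the corollary itself, but your proof should either establish the correct form of $\Psi$ or flag the discrepancy rather than asserting a constancy it does not derive. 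The ``if and only if'' criterion, which is the substantive content of the corollary, is correctly established by the earlier part of your argument.
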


\section{\label{sec:Explicit}Explicit realization and algorithms}

In the previous sections, we have already discussed the existence,
uniqueness and convergence of the normal forms and more generally
of the natural section of the period operator. Here, we are only concerned
with the actual computation, numerical or symbolic, of these objects
and no longer think about convergence. 

We present as precisely as possible the steps needed to compute explicitly,
trying to do as much symbolic computations as possible. Nevertheless,
we must allow iterated integrals of some class of transcendental functions
as elementary building blocks.

\subsection{\label{sub:symbolic_convergent}Symbolic approach for a convergent
vector field}

~

In this section, we use the sectors $V^{j}$ of all $x$ with $\left|\arg x-j\frac{2\pi}{k}\right|<\frac{\pi}{k}+\beta,\ \left|x\right|<r$,
for sufficiently small $r>0.$ Unless otherwise stated, $R$ can be
any element of $y\ww C\left\{ x,y\right\} $. 

In order to compute the period $\mathcal{T}_{R}\left(x^{m}y^{n}\right)\left(h\right)=\left(\mathcal{T}_{R}^{j}\left(x^{m}y^{n}\right)\left(h\right)\right)_{j\in\ww Z/k\ww Z}$,
we have to integrate the differential form $x^{m-k-1}y^{n}\dd x$
over the asymptotic cycle $\gamma^{j,s}\left(h\right)$ included in
the sectorial leaf $\left\{ H_{N}^{j}=h\right\} $ (see Definition~\ref{def_period}),
where $H_{N}^{j}$ denotes the sectorial first integrals associated
to $X_{R}$ (see Definition~\ref{def_first-integral}). 

We are particularly interested in inverting the relations
\begin{eqnarray}
\per R^{j}\left(\sum_{n\geq1}G_{n}\left(x\right)x^{\sigma n+1}y^{n}\right)\left(h\right) & = & \sum_{\ell\geq1}f_{\ell}^{j}h^{\ell},\ j\in\nf{\ww Z}{k\ww Z},\label{eq:system}
\end{eqnarray}
with $G_{n}\left(x\right)=\sum_{m=0}^{k-1}G_{m,n}x^{m}\in\ww C[x]_{<k}$,
being given a $k$-tuple $\left(f^{j}\right)_{j\in\nf{\ww Z}{k\ww Z}}$
of formal power series $f^{j}\left(h\right)=\sum_{\ell\geq1}f_{\ell}^{j}h^{\ell}$.
It turns out that the corresponding system, expressing the infinite
vector $\left(f_{\ell}^{j}\right)_{\ell,j}$ in terms of the vector
$\left(G_{m,n}\right)_{m,n}$, is an invertible block-triangular system,
if $\sigma+\mu\not\in\ww R_{\leq0}$. This condition will be assumed
throughout the section. 

This section is devoted to proving the 
\begin{prop}
\label{pro:period_triangular}Let $R\left(x,y\right):=\sum_{n>0}R_{n}\left(x\right)y^{n}\in y\,\germ{x,y}$.
The coefficients of the periods $\mathcal{T}_{R}^{j}\left(x^{m}y^{n}\right)\left(h\right)=\sum_{\ell\geq0}c_{m,n,\ell}^{j}h^{\ell},\ j\in\nf{\ww Z}{k\ww Z},\, m,n\in\ww N$
satisfy the following properties.\end{prop}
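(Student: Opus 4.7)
The plan is to parameterize each asymptotic cycle by its projection to the $x$-plane, expand the integrand in powers of $h$, and read off the structure of the coefficients $c_{m,n,\ell}^j$ from that expansion. Since $N^j$ vanishes at $y = 0$ (Corollary~\ref{cor:vector_field_prescribed_transitions}), the equation $H_N^j(x,y) = h$ reads
\begin{eqnarray*}
y\exp N^j(x,y) &=& h\,e^{-2\ii\pi j\mu/k}\,x^\mu\exp\!\left(-\frac{1}{kx^k}\right),
\end{eqnarray*}
and the analytic implicit function theorem yields a unique small-$h$ solution $y = \phi^j(x,h) = \sum_{\ell\ge 1}\phi_\ell^j(x)\,h^\ell$ with
\begin{eqnarray*}
\phi_1^j(x) &=& e^{-2\ii\pi j\mu/k}\,x^\mu\exp\!\left(-\frac{1}{kx^k}\right),
\end{eqnarray*}
the higher $\phi_\ell^j$ being polynomially determined by $\phi_1^j$ and by finitely many Taylor-in-$y$ coefficients of $N^j$, hence of $R$.

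Substituting this parameterization into the period integral, and projecting the asymptotic cycle $\gamma^{j,s}(h)$ to a loop $\tilde\gamma^{j,s}(h)\subset V^{j,s}$ in the $x$-plane, I obtain
\begin{eqnarray*}
\per{R}^j\!\left(x^m y^n\right)(h) &=& \int_{\tilde\gamma^{j,s}(h)} x^{m-k-1}\,\phi^j(x,h)^n\,\dd x\ =\ \sum_{\ell\ge n} c_{m,n,\ell}^j\,h^\ell,
\end{eqnarray*}
which at once yields the triangularity $c_{m,n,\ell}^j = 0$ for $\ell<n$, and exhibits each $c_{m,n,\ell}^j$ as a finite linear combination of integrals of the form $\int x^{m+p\mu-k-1}\exp(-p/(kx^k))\,\dd x$ (with $1\le p\le\ell$), the combination being polynomial in the Taylor coefficients of $R$.

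The heart of the argument is the evaluation of the diagonal coefficient
\begin{eqnarray*}
c_{m,n,n}^j &=& e^{-2\ii\pi j n\mu/k}\int_{\tilde\gamma^{j,s}} x^{m+n\mu-k-1}\exp\!\left(-\frac{n}{kx^k}\right)\dd x,
\end{eqnarray*}
which is independent of $R$. The substitution $u = n/(kx^k)$ turns it into a Hankel-type contour integral expressible by an explicit $\Gamma$-value; moreover the rotation $x\mapsto e^{2\ii\pi/k}x$ maps $V^{j,s}$ onto $V^{j+1,s}$ and multiplies the integrand by $e^{2\ii\pi(m+n\mu)/k}$, so that $c_{m,n,n}^j = \omega^{jm}\,c_{m,n,n}^0$ with $\omega := e^{2\ii\pi/k}$. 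For each fixed $n$, the $k\times k$ block of leading coefficients $(c_{m,n,n}^j)_{0\le m<k,\,j\in\zsk}$ is therefore a Vandermonde matrix scaled by the numbers $c_{m,n,n}^0$, which remain non-zero under the standing assumption $\sigma + \mu \notin \ww R_{\le 0}$ (keeping the $\Gamma$-arguments off the poles). Combined with the triangularity in $n$, this gives the block-triangular invertibility needed to solve the system~(\ref{eq:system}) level by level.

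The step I expect to be the main obstacle is making the term-by-term integration rigorous: the cycle $\gamma^{j,s}(h)$ collapses onto the singular set as $h\to 0$, so $\tilde\gamma^{j,s}(h)$ cannot simply be replaced by an $h = 0$ limit. My strategy is to fix a reference cycle $\tilde\gamma^{j,s}\subset V^{j,s}$ that does not shrink, and show that for $|h|$ small enough $\tilde\gamma^{j,s}(h)$ is homotopic to $\tilde\gamma^{j,s}$ inside the common domain of holomorphy, with the homotopy controlled by the bounds on $N^j$ coming from Theorem~\ref{thm:Cauchy-Heine}~(5). Uniform convergence of $\phi^j(x,h)^n$ in $x$ on $\tilde\gamma^{j,s}$ then legitimates the expansion in $h$ and the announced recursive formulas for the $c_{m,n,\ell}^j$ follow.
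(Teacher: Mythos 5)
Your proposal follows essentially the same route as the paper: parameterize each sectorial leaf as a graph $y = y(x)$ over a fixed $x$-path, expand in powers of $h$, and read the coefficients off the resulting series. Your $\phi^j(x,h)$ is precisely the second component of the paper's parameterization~(\ref{eq:param_y}), namely $\sum_{\ell\geq1}\theta_\ell^j(x)\bigl(h\phi_1^j(x)\bigr)^\ell$ with $\theta_\ell^j$ the Taylor coefficients of $\Theta^j=(\Psi_{\mathrm{O}}^j)^{-1}$ in its second variable; the triangularity, the Vandermonde structure via the rotation $x\mapsto e^{2\ii\pi/k}x$, and the non-vanishing of $c_{m,n,n}^0$ under $\sigma+\mu\notin\ww R_{\le0}$ all match.

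The ``main obstacle'' you flag at the end is actually not an issue, and the paper disposes of it in one sentence: the projection $\eta^j$ of the asymptotic cycle $\gamma^{j,s}(h)$ onto the $x$-plane \emph{does not depend on $h$ or on $R$}. The cycle in the leaf does shrink toward the singular point as $h\to0$, but its $x$-projection is a fixed path through $V^{j,s}$ with both endpoints at $x=0$. Term-by-term integration is then an absolutely convergent improper integral over this fixed path, because $E(x)^\ell=x^{\ell\mu}\exp(-\ell/(kx^k))$ decays super-exponentially as $V^{j,s}\ni x\to0$, and uniform convergence of the $h$-series follows from the uniform bounds on $N^j$ established in Theorem~\ref{thm:Cauchy-Heine} and Proposition~\ref{prop:realization_transition_maps}. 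No homotopy argument is needed.

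A secondary imprecision: you describe $c_{m,n,\ell}^j$ for $\ell>n$ as linear combinations of integrals $\int x^{m+p\mu-k-1}\exp(-p/(kx^k))\,\dd x$ with numerical coefficients polynomial in the Taylor data of $R$. This is not quite right. The integrands contain the functions $\theta_\ell^j(x)$, which are themselves polynomials (with integer coefficients) in the functions $N_1^j(x),\ldots,N_{\ell-1}^j(x)$, and each $N_n^j$ is an iterated integral~(\ref{eq:secto_normalize_solve}) involving $R_1,\ldots,R_n$. So the integrals one actually evaluates are of the form $T_m^j\bigl(E^\ell\sum_{\ell_1+\cdots+\ell_n=\ell}\prod_p\theta_{\ell_p}^j\bigr)$ as in formula~(\ref{eq:period_formula}); the extra $x$-dependence carried by the $\theta$'s is exactly what the paper tracks to obtain point~(2) (each $c_{m,n,\ell}^j$ depends only on $R_1,\ldots,R_{\ell-n}$) and point~(4) (polynomial dependence on the $R_{m,n}$ when $R\in\polg[x^\sigma y]$). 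Your sketch does not actually establish those two dependency statements; you would need to carry out the recursive analysis of $N_n^j$ and $\theta_n^j$ that the paper performs.
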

\begin{enumerate}
\item $c_{m,n,\ell}^{j}=0$, if $\ell<n$ and $c_{m,n,n}^{j}$ is independent
of $R.$
\item For $\ell>n,$ the coefficients $c_{m,n,\ell}^{j}$ depend only on
$R_{1},\ldots,R_{\ell-n}$ and vanish when $R=0$. The $k\times k$
matrices $D_{n}:=\tx{diag}\left(c_{m,n,n}^{0}\,:\, n\sigma+1\leq m\leq n\sigma+k\right)$
and $V:=\left[\exp\left(\nf{2\ii\pi mj}k\right)\right]_{\left(m,j\right)}$
are invertible. The relations~(\ref{eq:system}) are satisfied if,
and only if,
\[
\left[f_{n}^{j}\right]_{j\in\nf{\ww Z}{k\ww Z}}=VD_{n}\left[G_{m,n}\right]_{m<k}+\left[\sum_{1\leq a<n}\sum_{0\leq m<k}G_{m,a}c_{n\sigma+m+1,a,n}^{j}\right]_{j\in\nf{\ww Z}{k\ww Z}}.
\]
If $R\in\polg[x^{\sigma}y]$ and $\ell>n$ then $c_{m,n,\ell}^{j}$
is a polynomial in the $k\left(\ell-n\right)$ variables given by
the coefficients of $R_{1},\cdots,R_{\ell-n}$. Its coefficients can
be symbolically computed.\end{enumerate}
\begin{rem}
~
\begin{enumerate}
\item As $c_{m,n,n}^{j}$ do not depend on $R$, their values can be computed
when $R=0$ (see~\cite{Eliza,Tey-EqH}). We recall this result in
the next subsection. 
\item The coefficients mentioned in~(2) above can be computed as $\int_{\eta^{j}}x^{m+n\mu}e^{-\frac{n}{k}x^{-k}}Q\left(x\right)\dd x$,
where $Q$ is a polynomial of some iterated integrals involving only
powers of $x$ and exponentials and where $\eta^{j}$ is the projection
of some asymptotic cycle $\gamma^{j,s}\left(h\right)$ onto the $x$-plane.
\end{enumerate}
\end{rem}
Before giving the proof, we state two direct consequences of this
proposition. The first statement has been used in the proof of Proposition
\ref{pro:uniqueness}.
\begin{cor}
\label{cor:orbit_invertible}Finding $R(x,u)=\sum_{n\geq1}R_{n}\left(x\right)u^{n},\ R_{n}\left(x\right)=\sum_{m=0}^{k-1}R_{m,n}x^{m}\in\ww C\left[x\right]_{<k}$,
such that $X_{R}$ realizes a given orbital invariant $\varphi\in\left(h\ww C\left\{ h\right\} \right)^{k}$
means solving
\begin{eqnarray*}
\per R\left(\sum_{n\geq1}R_{n}\left(x\right)x^{\sigma n+1}y^{n}\right) & = & -\varphi\,.
\end{eqnarray*}
This system is non-linear but again «block-triangular» and formally
invertible. More precisely, the equations determining the vector $\left(R_{m,n}\right)_{m<k}$
are 
\[
-\left[f_{n}^{j}\right]_{j\in\nf{\ww Z}{k\ww Z}}=VD_{n}\left(R_{m,n}\right)_{m<k}+K_{n}\left(R_{1},...,R_{n-1}\right),
\]
where $K_{n}\left(R_{1},...R_{n-1}\right)$ denotes the (vector) coefficient
of $h^{n}$of $\per{\tilde{R}}\left(x\tilde{R}\left(x,y\right)\right)\left(h\right),$
$\tilde{R}\left(x,y\right)=\sum_{\ell=1}^{n-1}R_{\ell}\left(x\right)x^{\sigma\ell}y^{\ell}$
and hence depends only upon the previously determined $R_{\ell}$.
\end{cor}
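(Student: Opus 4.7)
The plan is to reduce this corollary to Proposition~\ref{pro:period_triangular} by identifying the realization equation with the inversion problem treated there, then exploit the block-triangular structure to construct $R$ inductively in the $y$-degree.

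First, by Remark~\ref{rem_orbital_realization}, the orbital modulus of $X_R$ is $\per R\left(-xR(x,x^{\sigma}y)\right)$, so requiring $X_R$ to realize $\varphi$ amounts to $\per R\left(xR(x,x^{\sigma}y)\right)=-\varphi$. Expanding $R(x,u)=\sum_{n\geq1}R_n(x)u^n$ with $R_n(x)=\sum_{m=0}^{k-1}R_{m,n}x^m$ and substituting $u=x^{\sigma}y$ puts this into the exact form of the system~(\ref{eq:system}) of Proposition~\ref{pro:period_triangular}, with $G_n=R_n$ and $f^j=-\varphi^j$. Extracting the $h^n$-coefficient via part~(2) of that proposition yields
\begin{eqnarray*}
-[f_n^j]_{j\in\zsk} & = & VD_n[R_{m,n}]_{m<k}+\left[\sum_{1\leq a<n}\sum_{m<k}R_{m,a}\,c_{n\sigma+m+1,a,n}^{j}\right]_{j\in\zsk}.
\end{eqnarray*}

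I would then proceed by strong induction on $n$. Suppose $R_1,\ldots,R_{n-1}$ have already been determined. By Proposition~\ref{pro:period_triangular}(2), each coefficient $c_{n\sigma+m+1,a,n}^{j}$ with $a<n$ depends only on $R_1,\ldots,R_{n-a}\subset\{R_1,\ldots,R_{n-1}\}$, so the bracketed correction is already fixed and defines the vector $K_n(R_1,\ldots,R_{n-1})$. Invertibility of $V$ (a Vandermonde on $k$-th roots of unity) and of $D_n$ (granted by the proposition, whose diagonal entries $c_{n\sigma+m+1,n,n}^0$ are non-zero thanks to $\sigma+\mu\notin\ww R_{\leq0}$) then uniquely determines $[R_{m,n}]_{m<k}$. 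This simultaneously establishes the block-triangular shape of the non-linear system and its unique formal solvability.

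Finally, to match the statement's description of $K_n$, let $\tilde{R}(x,y):=\sum_{\ell=1}^{n-1}R_\ell(x)x^{\sigma\ell}y^{\ell}$. Expanding $x\tilde{R}=\sum_{a<n}\sum_{m<k}R_{m,a}x^{\sigma a+m+1}y^a$ and applying $\per{\tilde{R}}$ term-by-term gives, as coefficient of $h^n$, the quantity $\sum_{a,m}R_{m,a}\,c_{\sigma a+m+1,a,n}^{j}(\tilde{R})$; by Proposition~\ref{pro:period_triangular}(2) this agrees with the corresponding sum computed with $R$, since both depend only on $R_1,\ldots,R_{n-a}$. All the substantive work is hidden in Proposition~\ref{pro:period_triangular}; once it is granted the corollary is merely an unpacking, the only subtle point being to track the weak dependence of each coefficient $c_{m,a,n}^j$ on $R$ itself in order to confirm that every nonlinear contribution is absorbed into $K_n$.
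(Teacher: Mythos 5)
Your proof is correct and follows exactly the route the paper intends: the paper states this corollary as a ``direct consequence'' of Proposition~\ref{pro:period_triangular} without writing out the details, and you have simply unpacked what is left implicit, namely identifying the realization equation with the system~(\ref{eq:system}) via $G_n=R_n$, $f^j=-\varphi^j$, and then using part~(2) of the proposition to set up the induction on $y$-degree and to justify that $K_n$ may be evaluated with $\per{\tilde R}$ rather than $\per R$ because $c^j_{m',a,n}$ depends only on $R_1,\dots,R_{n-a}$. No divergence in substance.
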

In the next section, the subsequent corollary will enable our numerical
computations.
\begin{cor}
\label{cor:step-by-step}Let $R=\sum_{p\geq0}R_{p}\left(x\right)y^{p}\in\germ{x,y}$
and $n,\, m,\, d\in\ww N$ be given. We denote by $\tilde{R}_{d}$
the truncated function $\sum_{j\leq d}R_{j}\left(x\right)y^{j}$.
Then
\begin{eqnarray*}
\mathcal{T}_{R}\left(x^{m}y^{n}\right)\left(h\right) & = & \mathcal{T}_{\tilde{R}_{d}}\left(x^{m}y^{n}\right)\left(h\right)+o\left(h^{n+d}\right)\,.
\end{eqnarray*}

\end{cor}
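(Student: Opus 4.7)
The plan is to read off the result directly from the block-triangular structure of the period established in Proposition~\ref{pro:period_triangular}. Write
\begin{eqnarray*}
\mathcal{T}_R(x^m y^n)(h) \;=\; \sum_{\ell\ge 0} c_{m,n,\ell}^j(R)\, h^\ell
\end{eqnarray*}
and likewise for $\tilde R_d$ in place of $R$. I would argue coefficient by coefficient in $\ell$ that $c_{m,n,\ell}^j(R) = c_{m,n,\ell}^j(\tilde R_d)$ for every $\ell \le n+d$, which is precisely the statement $\mathcal{T}_R(x^m y^n)(h) - \mathcal{T}_{\tilde R_d}(x^m y^n)(h) = O(h^{n+d+1}) = o(h^{n+d})$.

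First, for $\ell < n$ both coefficients vanish by part~(1) of Proposition~\ref{pro:period_triangular}, and for $\ell = n$ the coefficient $c_{m,n,n}^j$ is independent of the perturbation $R$, so the two agree trivially. The substantive range is $n < \ell \le n+d$. Here part~(2) of Proposition~\ref{pro:period_triangular} tells us that $c_{m,n,\ell}^j$ depends only on $R_1, \ldots, R_{\ell-n}$. Since $\ell - n \le d$, all of these Taylor coefficients of $R$ (viewed as a power series in $y$ with coefficients in $\germ x$) coincide with those of $\tilde R_d$, by the very definition of the truncation. Hence the two coefficients of $h^\ell$ are equal, and the claimed asymptotic identity follows.

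There is essentially no obstacle: the corollary is a direct bookkeeping consequence of the dependence statement in Proposition~\ref{pro:period_triangular}(2), once one observes that truncating $R$ at order $d$ in $y$ leaves unchanged every $R_j$ with $j \le d$. The only minor care needed is to notice that the indexing of $\tilde R_d$ includes $R_0$, which plays no role in $c_{m,n,\ell}^j$ for $\ell > n$ anyway, since those coefficients depend on $R_1, \ldots, R_{\ell - n}$ only. No further estimation or analytic argument is required beyond what is already contained in Proposition~\ref{pro:period_triangular}.
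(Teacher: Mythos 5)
Your proof is correct and follows exactly the route the paper intends: the corollary is stated as a direct consequence of Proposition~\ref{pro:period_triangular}, and your coefficient-by-coefficient argument from parts (1) and (2)---that $c_{m,n,\ell}^{j}$ vanishes for $\ell<n$, is $R$-independent for $\ell=n$, and for $n<\ell\le n+d$ depends only on $R_{1},\ldots,R_{\ell-n}$, all of which are unchanged by truncation at order $d$---is precisely the intended bookkeeping.
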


\subsubsection{The case of the model}

~

The leaf $\left\{ h=H_{0}^{j}\right\} $ is the graph of the function
given by 
\begin{eqnarray*}
y\left(x\right) & = & hx^{\mu}\exp\left(-j\frac{2\ii\pi\mu}{k}-\frac{1}{kx^{k}}\right)\,.
\end{eqnarray*}
 Notice that, because of the determination of $\arg x$ in $V^{j}$,
the right-hand side of the above relation depends only on the class
of $j$ in $\zsk$ . For convenience we define
\begin{eqnarray*}
\delta & := & \exp\left(\frac{2\ii\pi}{k}\right)\\
E\left(x\right) & := & x^{\mu}\exp\left(-\frac{1}{kx^{k}}\right)\,.
\end{eqnarray*}
Letting $\eta^{j}$ denote the projection of $\gamma^{j,s}\left(h\right)$
on the plane $\left\{ y=0\right\} $ (which does not depend on $h$)
we compute
\begin{eqnarray*}
\per 0^{j}\left(x^{m}y^{n}\right)\left(h\right) & = & e^{-nj\,\nf{2\ii\pi\mu}k}h^{n}\int_{\eta^{j}}x^{m}E\left(x\right)^{n}\frac{\dd x}{x^{k+1}}=\delta^{mj}\frac{2\ii\pi\left(\nf nk\right)^{\frac{m+n\mu}{k}}e^{\ii\pi\left(m+n\mu\right)/k}}{n\Gamma\left(\frac{m+n\mu}{k}\right)}h^{n}\,.
\end{eqnarray*}
The value of the integral is computed using Hankel's integral representation
of $\frac{1}{\Gamma}$. This computation has been performed first
by P.~\noun{Elizarov} in~\cite{Eliza} to compute Gâteaux derivatives
of the orbital modulus along the direction $R\in\ww Cx^{n}y^{m}$.
The coefficient 
\begin{eqnarray}
c_{m,n}^{j} & := & \delta^{mj}\frac{2\ii\pi\left(\nf nk\right)^{\frac{m+n\mu}{k}}e^{\ii\pi\left(m+n\mu\right)/k}}{n\Gamma\left(\frac{m+n\mu}{k}\right)}\label{eq:period-coef-1}\\
 & = & c_{m,n,n}^{j}\nonumber 
\end{eqnarray}
is zero if, and only if, $m+n\mu\in k\ww Z_{\leq0}$. Hence, the condition
$\sigma+\mu\notin\ww R_{\leq0}$ prevents $c_{n\sigma+m,n}^{j}$ from
vanishing (as long as $m$ is nonnegative).

\subsubsection{General case: proof of Proposition~\ref{pro:period_triangular}}

~

Here the leaf $\left\{ h=H_{N}^{j}\right\} $ is the graph of the
function $x\mapsto y\left(x\right)$ given by 
\begin{eqnarray}
y\left(x\right) & = & \Theta^{j}\left(x,hx^{\mu}\exp\left(-j\frac{2\ii\pi\mu}{k}-\frac{1}{kx^{k}}\right)\right)\label{eq:param_y}
\end{eqnarray}
where 
\begin{eqnarray*}
\Theta^{j} & := & \left(\Psi_{O}^{j}\right)^{-1}
\end{eqnarray*}
denotes the inverse of the sectorial normalization introduced in subsection
1.2.2. Let us expand $\Theta^{j}$ with respect to $y$:
\begin{eqnarray*}
\Theta^{j}\left(x,y\right) & = & \left(x,\sum_{\ell\geq0}\theta_{\ell}^{j}\left(x\right)y^{\ell}\right)\qquad\qquad,\qquad\theta_{0}^{j}:=0\,,\,\theta_{1}^{j}:=1\,
\end{eqnarray*}
and denote by $\eta^{j}$ the projection of $\gamma^{j,s}\left(h\right)$
on the plane $\left\{ y=0\right\} $ (which does not depend on $h$
nor on $R$). For given $\xi\in\mathcal{B}\left(V^{j}\right)$ with
$\xi\left(x\right)=O\left(x^{k+1}\right)$, we set 
\begin{eqnarray*}
T_{m}^{j}\left(\xi\right) & := & \int_{\eta^{j}}x^{m}\xi\left(x\right)\frac{\dd x}{x^{k+1}}\,.
\end{eqnarray*}
We have the formula
\begin{eqnarray}
\mathcal{T}_{R}^{j}\left(x^{m}y^{n}\right)\left(h\right) & = & c_{m,n}^{j}h^{n}\label{eq:period_formula}\\
 &  & +\sum_{\ell>n}h^{\ell}e^{-\nf{2\ii\pi\mu j\ell}k}T_{m}^{j}\left(E^{\ell}\sum_{\ell_{1}+\cdots+\ell_{n}=\ell}\prod_{p}\theta_{\ell_{p}}^{j}\right)\,.\nonumber 
\end{eqnarray}
This implies statement (1) of Proposition \ref{pro:period_triangular}
and, together with formula (\ref{eq:period-coef-1}) and the linearity
of $\per R$, also statement (3).

\bigskip{}

It remains to prove (2) and (4). We write
\begin{eqnarray*}
R\left(x,y\right) & = & \sum_{n>0}R_{n}\left(x\right)y^{n}\,\,\,\,,
\end{eqnarray*}
where $R_{n}\in\ww C\left\{ x\right\} $ have a common disk of convergence.
We can explicit the normalizing functions themselves, because each
coefficient of 
\begin{eqnarray*}
N^{j}(x,y) & := & \sum_{n>0}N_{n}^{j}\left(x\right)y^{n}\,\,\,\,,\, N_{n}^{j}\in\mathcal{O}\left(V^{j}\right)
\end{eqnarray*}
is the unique solution bounded on $V^{j}$ of a first-order, linear
and inhomogeneous differential equation we deduce from
\begin{eqnarray*}
X_{R}\cdot N^{j} & = & -xR
\end{eqnarray*}
by identifying the coefficients of $y^{n}$. Thus, we have 
\begin{eqnarray}
N_{n}^{j}\left(x\right) & = & E^{-n}\left(x\right)\int_{\left(0\to x\right)}\Delta_{n}^{j}\left(t\right)E^{n}\left(t\right)t^{-k}dt\label{eq:secto_normalize_solve}
\end{eqnarray}
 where 
\begin{eqnarray*}
\Delta_{n}^{j}\left(x\right) & := & R_{n}\left(x\right)-\sum_{p+q=n}q\, N_{q}^{j}\left(x\right)R_{p}\left(x\right)
\end{eqnarray*}
with $N_{0}^{j}=R_{0}^{j}=0$. Here the integration is done on the
projection $\left(0\to x\right)$ of an asymptotic path (see Figure~\ref{fig:path_asy}).
It follows that $N_{n}^{j}$ depends on $R_{p}$ for $0<p\leq n$.
In the case of $R\in\polg[x^{\sigma}y]$, we can be more precise.
For simplicity, we state this only if $k=1.$
\begin{lem}
If k=1 and $R(x,y)=\sum_{n>0}R_{n}x^{n\sigma}y^{n}\,$ with $R_{n}\in\ww C$,
then there exist <<universal>> functions $\phi_{\bullet}^{n}$,
obtained as sums and products of iterated integrals, such that each
function $N_{n}^{0}$ can be written
\begin{eqnarray*}
N_{n}^{0}\left(x\right) & = & \sum_{\ell=1}^{n}\sum_{\begin{array}{c}
1\leq j_{1}\leq j_{2}\leq\cdots\leq j_{\ell}\\
j_{1}+j_{2}+\cdots+j_{\ell}=n
\end{array}}\left[\phi_{\left(j_{1},\cdots,j_{\ell}\right)}^{n}\left(x\right)\prod_{m=1}^{\ell}R_{j_{m}}\right]\,.
\end{eqnarray*}
The degree of $N_{n}^{0}$ as a polynomial in the variables $R_{1},\, R_{2},\,\cdots,\, R_{n}$
is exactly $n$. 
\end{lem}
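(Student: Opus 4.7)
The plan is to induct on $n$ using the explicit recursion for the normalizing functions derived from $X_R \cdot N^j = -xR$. Specialized to $k=1$, formula~(\ref{eq:secto_normalize_solve}) reads
\begin{eqnarray*}
N_n^0(x) & = & E^{-n}(x) \int_{(0\to x)} \Delta_n^0(t)\, E^n(t)\, t^{-1}\, \dd t,
\end{eqnarray*}
where $\Delta_n^0(x) = R_n\, x^{n\sigma} - \sum_{\substack{p+q=n\\ p,q\geq 1}} q\, N_q^0(x)\, R_p\, x^{p\sigma}$. The base case $n=1$ gives immediately $N_1^0 = R_1\, \phi^1_{(1)}$ with $\phi^1_{(1)}(x) := E^{-1}(x) \int_{(0\to x)} t^{\sigma-1} E(t)\, \dd t$, establishing the expansion at depth $n=1$ and degree exactly $1$.

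For the inductive step, I would assume the asserted expansion holds for every $q < n$, so that $N_q^0$ is a polynomial of degree exactly $q$ in $R_1,\ldots,R_q$ whose monomials have multi-indices summing to $q$. Substituting these expansions into the definition of $\Delta_n^0$ exhibits the latter as a linear combination of monomials $R_{j_1}\cdots R_{j_\ell} R_p$ with $j_1 + \cdots + j_\ell + p = n$, whose coefficients are products of the inductively constructed $\phi^q_\bullet$ and pure powers of $t$. Since the operator $f \mapsto E^{-n}(x) \int_{(0\to x)} f(t) E^n(t) t^{-1}\, \dd t$ is $\ww C$-linear, applying it to $\Delta_n^0$ produces $N_n^0$ as a linear combination of terms $\phi^n_{(i_1,\ldots,i_m)}(x) R_{i_1}\cdots R_{i_m}$ with $\sum_\alpha i_\alpha = n$. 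Reordering the indices non-decreasingly and collecting like terms yields the claimed form, and the functions $\phi^n_\bullet$ are visibly \emph{universal}: they are iterated integrals built only from powers of $t$, powers of $E$, and the previously constructed $\phi^q_\bullet$, independent of the specific values of the $R_i$.

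The delicate point, which I expect to be the main obstacle, is showing the degree is exactly $n$ and not merely at most $n$. It suffices to verify that the coefficient of the monomial $R_1^n$ in $N_n^0$ does not vanish identically. Tracing the recursion, only the branch $p=1, q=n-1$ contributes to this coefficient, so by induction
\begin{eqnarray*}
\phi^n_{(1,\ldots,1)}(x) & = & -(n-1)\, E^{-n}(x) \int_{(0\to x)} \phi^{n-1}_{(1,\ldots,1)}(t)\, t^{\sigma-1}\, E^n(t)\, \dd t.
\end{eqnarray*}
To show this nested expression does not vanish, I would perform a Laplace-type asymptotic analysis as $x \to 0$ in $V^0$ along the direction $\re{1/x} \to +\infty$. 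A direct computation using the incomplete Gamma representation of the defining integral gives $\phi^1_{(1)}(x) \sim c_1\, x^{\sigma+1}$ for a non-zero constant $c_1$. Propagating this asymptotic through the recursion, each Laplace integration contributes a non-vanishing leading factor, yielding $\phi^n_{(1,\ldots,1)}(x) \sim c_n\, x^{n(\sigma+1)}$ with $c_n \neq 0$. The function is therefore non-zero as a germ, so the degree of $N_n^0$ in $R_1,\ldots,R_n$ is exactly $n$.
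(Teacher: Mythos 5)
Your proof is correct. The paper states this lemma without a written justification, presenting it as an immediate consequence of the recursion~(\ref{eq:secto_normalize_solve}) derived just beforehand; your induction on $n$ through that recursion, together with tracking the $R_1^n$ coefficient via the Laplace asymptotics $\phi^n_{(1,\ldots,1)}(x)\sim c_n\,x^{n(\sigma+1)}$ (the recursion yields $c_n=-\tfrac{n-1}{n}c_{n-1}$, $c_1=1$, so $c_n\neq 0$), is the natural argument and supplies the degree-exactness detail the paper leaves implicit.
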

\begin{figure}
\begin{centering}
\includegraphics[height=6cm]{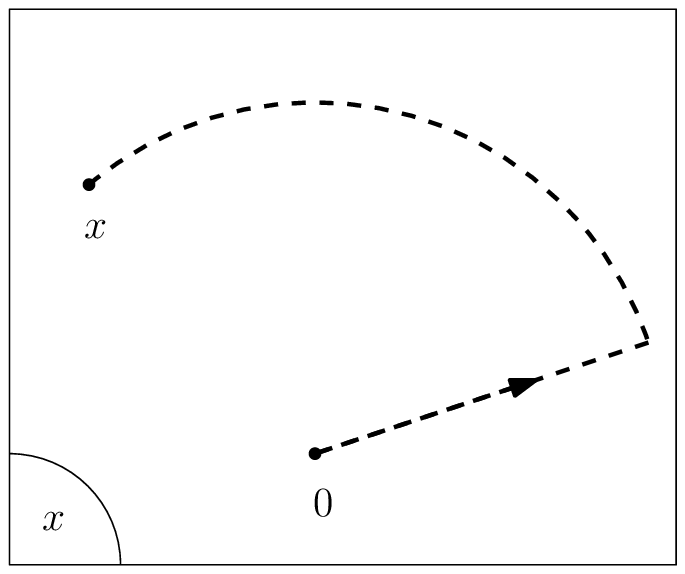}
\par\end{centering}

\caption{\label{fig:path_asy}The path $\left(0\to x\right)$ on which the
integrations are performed to compute $N$.}
\end{figure}

Since
\begin{eqnarray*}
\Theta^{j}\left(x,y\right) & = & \left(x,y+\sum_{n>1}\theta_{n}^{j}\left(x\right)y^{n}\right)\\
 & = & \left(x,y\exp N^{j}\left(x,y\right)\right)^{-1}\,,
\end{eqnarray*}
the properties of the formal inversion imply that $\theta_{n}^{j}$
is a polynomial with integer coefficients of $N_{1}^{j},...,N_{n-1}^{j}$,
hence it depends only on $R_{1},...,R_{n-1}$. By (\ref{eq:period_formula}),
$c_{m,n,\ell}^{j}$ only involves $\theta_{1}^{j},...,\theta_{\ell+1-n}^{j}$
and hence depends only on $R_{1},...,R_{\ell-n}$. This proves statement
(2) of the proposition. In the case of $R\in\polg[x^{\sigma}y],\ R=\sum_{n>0}\sum_{m<k}R_{m,n}x^{m+\sigma n}y^{n}\,\,\,\,,$
this also proves statement (4), because $T_{m}^{j}\left(E^{\ell}\sum_{\ell_{1}+\cdots+\ell_{n}=\ell}\prod_{p}\theta_{\ell_{p}}\right)$
can then be expressed as a polynomial in the $R_{m,s},m<k,\ s\leq\ell-n$
the coefficients of which are of the desired form.

\subsection{\label{sub:Computing}Computing the modulus and normal form: proof
of the Computation~Theorem}

~

In this section we deal with finding an algorithm to compute numerically
the modulus $\varphi^{j}$ and $f^{j}$, as well as the normal form.
We do not intend to give effective nor specially clever methods, but
only a theoretical mean to actually compute. 

The proof of the Computation Theorem follows from the study conducted
here for vector fields written in Dulac's prepared form, as putting
$Z$ into this form is a computable process (the procedure can be
found in \noun{H.~Dulac}'s memoir~\cite{Dulac2}) once the orbital
formal class $\left(k,\mu\right)$ is known%
\footnote{The integer $k$ is not computable with halting, finite Turing machines
as one must test the equality to zero of diverse Taylor coefficients
of the components of $Z$.%
}. We therefore start from a vector field in the (not necessarily normal)
form 
\begin{eqnarray}
Z\left(x,y\right) & = & U\left(x,y\right)X\left(x,y\right)\nonumber \\
X\left(x,y\right) & = & x^{k+1}\pp x+y\left(1+\mu x^{k}+xR\left(x,y\right)\right)\pp y\label{eq:Dulac}
\end{eqnarray}
where $U\left(0,0\right)\neq0$. The formal orbital modulus is explicit
in this form, and the temporal modulus $P$ simply coincides with
the $k^{\tx{th}}$-jet of $U\left(x,0\right)$.

\subsubsection{The period of a given convergent vector field}

~

We want to compute numerically the power series 
\begin{eqnarray*}
\mathcal{T}_{R}^{j}\left(x^{m}y^{n}\right) & = & \sum_{p\geq n}c_{m,n,p}^{j}h^{p}
\end{eqnarray*}
corresponding to the period operator associated to $X_{R}$.
\begin{itemize}
\item We begin with fixing a family of base points $\left(x_{j}\right)_{j\in\ww Z/k}$
in the saddle-parts $V^{j,s}$, for instance $x_{j}=-re^{2\ii\pi j/k}$
where $r>0$ is sufficiently small so that $X$ is defined, but not
too small in order to avoid numerical instabilities.
\item We compute the values $H^{j}\left(y\right):=H_{N}^{j}\left(x_{j},y\right)$
of the sectorial first integrals by integrating numerically $R\frac{\dd x}{x^{k}}$
along an asymptotic path $\gamma^{j}\left(x_{j},y\right)$. One can
think of a Kutta-Runge method to compute $x\mapsto y\left(x\right)$.
\item We compute the sectorial solutions $F^{j}$ to the equation $X\cdot F=x^{m}y^{n}$
in the same way.
\item Hence by definition 
\begin{eqnarray*}
\mathcal{T}^{j}\left(x^{m}y^{n}\right)\left(H^{j}\left(x_{j},y\right)\right) & = & F^{j+1}\left(x_{j},y\right)-F^{j}\left(x_{j},y\right)=\sum_{p\geq n}c_{m,n,j,p}H^{j}\left(x_{j},y\right)^{p}
\end{eqnarray*}
 is a known function $T^{j}$ of $y$. 
\item We derive from this function the coefficients $c_{m,n,\ell}^{j}$
by applying Cauchy's formula:
\begin{eqnarray*}
c_{m,n,\ell}^{j} & = & \frac{1}{2\ii\pi}\int_{\mathcal{C}}\frac{T^{j}\left(y\right)}{H^{j}\left(y\right)^{\ell+1}}\mbox{d}H^{j}\left(y\right)
\end{eqnarray*}
where $\mathcal{C}$ is a circle in $y$-coordinates. Because $H^{j}$
is a diffeomorphism then $H^{j}\left(\mathcal{C}\right)$ is also
a simple loop with unitary winding number around $\left\{ H^{j}=0\right\} $.
\end{itemize}

\subsubsection{Building the normal form}

~

We only deal with the case $k=1$, the general case being the same
up to solving a Vandermonde system. Because of Corollaries~\ref{cor:orbit_invertible},
\ref{cor:step-by-step} one can compute $R\left(x,y\right)=\sum_{n>0}R_{n}x^{\sigma n}y^{n}$
in much the same way as we did before.
\begin{itemize}
\item We fix some base point $x_{0}$.
\item Given $\varphi\left(h\right)=:\sum_{n>0}\alpha_{n}h^{n}$ we compute
$R_{1}=-\alpha_{1}/c_{\sigma,1}$.
\item For $n\geq2$, if $R_{1},...,R_{n-1}$ are already known, we put $\tilde{R}\left(x,y\right)=\sum_{\ell=1}^{n-1}R_{\ell}\left(x\right)x^{\sigma\ell}y^{\ell}$
and compute the period $\mathcal{T}_{\tilde{R}}\left(x\tilde{R}\mbox{(x,y)}\right)$
using the previous method, in order to obtain its coefficient $d_{n}$
of $h^{n}$. 
\item Then, we compute $R_{n}:=\left(-\alpha_{n}-d_{n}\right)/c_{\sigma n,n}$.In
this way, we obtain numerical values for $R_{n}$ in finite time,
up to any order and with arbitrary precision.
\end{itemize}

\subsection{\label{sub:integrability}Integrability by quadrature}

~

These numerical computations actually yield a numerical criterion
for integrability by quadrature of saddle-node equations or, more
correctly, a numerical test of non-integrability by quadrature for
saddle-node convergent vector-fields. Indeed a result by \noun{M.~Berthier}
and \noun{F.~Touzet~}\cite{BerTouze} states that those foliations
corresponding to first order differential equations which are integrable
by quadrature must have their orbital modulus of the form
\begin{eqnarray*}
\varphi^{j}\left(h\right) & = & \frac{1}{p}\log\left(1-\alpha_{j}h^{p}\right)
\end{eqnarray*}
for some $p\in\ww N$ and some collection $\left(\alpha_{j}\right)_{j\in\ww Z/k}\subset\ww C$.
In~\cite{Tey-ExSN} we already proved that their normal form must
then be 
\begin{eqnarray*}
X & = & x^{k+1}\pp x+\left[y\left(1+\mu x^{k}\right)+y^{p+1}x^{p\sigma+1}R_{p}\left(x\right)\right]\pp y.
\end{eqnarray*}

\subsection{Explicit realization of a holonomy diffeomorphism}

~

The manner \noun{J.~Martinet} and \noun{J.-P.~Ramis} identified
completely the space of invariants (\emph{i.e.} proved the «orbital
modulus mapping» is onto) is geometric. They build an abstract almost-complex
$C^{\infty}$-manifold resembling the suspension of the modulus, and
using Newlander-Niremberg's theorem obtain the complex integrability
of this manifold and show it is biholomorphic to a domain of $\ww C^{2}$.
Although this construction is far from being explicit%
\footnote{Even if Newlander-Niremberg's theorem ultimately relies on some fixed-point
method, it appears difficult to translate the proof into a computable
process (as in Definition~\ref{def_computability}) to derive a particular
representative of a given computable orbital class. %
} it nonetheless answers an important question: 
\begin{thm}
\cite{MaRa-SN}Any germ of a diffeomorphism $\psi\in\mbox{Diff}\left(\ww C,0\right)$
can be realized as the holonomy of some convergent saddle-node foliation
singular at $\left(0,0\right)$.
\end{thm}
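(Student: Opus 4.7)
The plan is to specialize the orbital realization machinery of Section~\ref{sec:orbit-NF} to $k=1$ and to observe that the sole Martinet-Ramis transition map $\psi^{0}$ then coincides with the holonomy of the analytic separatrix of $X_{R}$. Given $\psi\in\mathrm{Diff}(\ww C,0)$, I would first write $\psi'(0)=e^{2\ii\pi\mu}$ with $\mu\notin\ww R_{\leq0}$; this is always achievable since the admissible logarithms form a translate of $\ww Z$ in $\ww C$, at most one element of which can lie on the forbidden ray. Setting
\[
\varphi(h):=\log\!\bigl(\psi(h)/(e^{2\ii\pi\mu}h)\bigr)\in h\,\germ h
\]
(principal branch near $0$, well-defined because $\psi(h)/(e^{2\ii\pi\mu}h)\to1$), I would apply Proposition~\ref{prop:realization_transition_maps}, Corollary~\ref{cor:vector_field_prescribed_transitions} and Lemma~\ref{lem:perturb_is_poly} with $k=1$ and the data $(\mu,\varphi)$ to obtain a convergent saddle-node vector field
\[
X_{R}=x^{2}\pp x+y\bigl(1+\mu x+xR(y)\bigr)\pp y
\]
on some $\mathcal{V}_{\rho}$, with $R\in y\,\ww C\{y\}$, whose unique sectorial transition map is $\psi^{0}(h)=h\exp\!\bigl(2\ii\pi\mu+\varphi(h)\bigr)=\psi(h)$ by construction.

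The remaining step is to identify this abstract cocycle $\psi^{0}$ with the concrete holonomy of the strong separatrix $\{x=0\}$ of $\mathcal{F}_{X_{R}}$, measured on a transversal $\{x=x_{0}\}$. For the formal model ($R=0$), leaves are $y=h\,x^{\mu}e^{-1/x}$, so continuing $x^{\mu}$ along $x=x_{0}e^{\ii\theta}$, $\theta\in[0,2\pi]$, sends the transversal coordinate $h$ to $e^{2\ii\pi\mu}h$, matching the model's $\psi^{0}$. The same reasoning applies sectorially to $X_{R}$ using the first integral $H_{N}^{0}$: the self-overlap $V^{0,s}$ of the wrap-around sector $V^{0}$ (Remark~\ref{rem_k=00003D1}) realizes the analytic continuation around $x=0$, and the defining identity $H_{N}^{1}=\psi^{0}\circ H_{N}^{0}$ (together with the fact that $H_{N}^{0}$ and $H_{N}^{1}$ are two determinations of the same multivalued first integral) shows that the geometric return map on $\{x=x_{0}\}$, expressed in the coordinate $h$, equals $\psi^{0}=\psi$.

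The main and essentially only obstacle is this last bookkeeping identification of the Martinet-Ramis cocycle $\psi^{0}$ with the geometric return map along a loop encircling the origin in $\{x=0\}$ --- a classical but slightly delicate matter owing to the wrap-around convention for $V^{0}$ when $k=1$. All the analytic substance of the realization is provided, for free, by the Cauchy-Heine fixed-point construction of Section~\ref{sec:orbit-NF}; in particular no appeal to Newlander-Niremberg nor to any other abstract gluing theorem is required, and the resulting $X_{R}$ provides the sought convergent saddle-node foliation whose holonomy is $\psi$.
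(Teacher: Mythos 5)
Your proof takes essentially the same route as the paper's: set $\mu = \frac{1}{2\ii\pi}\log\psi'(0)$ with the branch chosen off $\ww R_{\leq0}$, realize the orbital modulus $\varphi(h)=\log\bigl(\psi(h)/h\bigr)-2\ii\pi\mu$ with $k=1$ via the Cauchy-Heine fixed-point machinery of Section~\ref{sec:orbit-NF}, and identify the single Martinet-Ramis transition map with the return map on the transversal $\{x=x_*\}$ through the first integral --- exactly the paper's identity~(\ref{eq:conj_holonomy}) followed by the change of coordinate $y\mapsto H_*(y)$. Two harmless corrections: the separatrix whose holonomy one realizes is the \emph{weak} (convergent) one $\{y=0\}$, not the strong one $\{x=0\}$ (your transversal $\{x=x_0\}$ and $x$-plane loop are indeed those attached to $\{y=0\}$, so only the label is off); and when $\psi'(0)\in\ww R_{>0}$ there are infinitely many inadmissible determinations of $\mu$ in $\ww R_{\leq0}$, not at most one, though a branch off that ray can of course still always be chosen.
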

Indeed set $\mu:=\frac{1}{2\ii\pi}\log\psi'\left(0\right)$ and take
a vector field $X$, in Dulac's form~(\ref{eq:Dulac}), whose orbital
modulus is precisely $\varphi\,:\, h\mapsto\log\frac{\psi\left(h\right)}{h}-2\ii\pi\mu$.
Then the holonomy $\hol{}$ computed by lifting a generator of $\left\{ y=0\,,\, x\neq0\right\} $
in the foliation through the projection $\left(x,y\right)\mapsto x$
is conjugate to $\psi$ through the first-integral. More precisely,
by taking $x_{*}\in V^{s}$ sufficiently close to $0$ and denoting
by $H_{*}$ the local diffeomorphism $y\mapsto H_{0}^{0}\left(x_{*},y\right)$
one obtains, for all $y$ sufficiently close to $0$:
\begin{eqnarray}
\psi\left(H_{*}\left(y\right)\right) & = & H_{*}\left(\hol{}\left(y\right)\right)\,.\label{eq:conj_holonomy}
\end{eqnarray}
Therefore performing the local changes of coordinates $\left(x,y\right)\mapsto\left(x,H_{*}\left(y\right)\right)$
within $X$ produces a new vector field $Z$ in Dulac's form for which
the holonomy computed above $\left\{ x=x_{*}\right\} $ is precisely
$\psi$.

\noun{R.~Perez-Marco} and \noun{J.-C.~Yoccoz} show in~\cite{MarcYoc}
a result of the same kind, by using a quasi-conformal suspension of
$\psi$ and by solving the $\overline{\partial}$-operator equation
to modify the foliated space, making it a domain of $\ww C^{2}$.
Here again the proof is not explicit.

\bigskip{}

The work conducted here allows one to build a somehow explicit realization
of some germ of a diffeomorphism $\psi$ as the holonomy of a foliation
of a normal form. In particular if $\psi$ is computable then so is
$Z$. Besides it is possible to control quite precisely the domain
on which this diffeomorphism is realized.

\subsection{Numerical results }

~

\subsubsection{First example: modulus of an integrable vector field }

~

This is the numerical computation we did for the orbital modulus $\varphi\left(h\right):=\sum_{n\in\ww N}\alpha_{n}h^{n}$
of 
\begin{eqnarray*}
X_{y} & := & x^{2}\pp x+y\left(1+xy\right)\pp y\,.
\end{eqnarray*}
As this equation is a Bernoulli equation its orbital modulus can be
computed explicitly $\psi\,:\, h\mapsto\frac{h}{1-2\ii\pi h}$. Hence
the expected value of $\mathcal{T}_{R}\left(-xR\right)$ is $\varphi\,:\, h\mapsto\log\left(1-2\ii\pi h\right)$.
This is what was computed, using a Kutta-Runge method of order $4$
with a step of $0,001$ for the sectorial integrals, implemented in
${\tt C++}$. The initial condition is $x_{0}=-5$ and the circle
$\mathcal{C}\,:\, t\in\left[0,1\right]\mapsto0,1\times\exp\left(2\ii\pi t\right)$
has been discretized by $1000$ points. Cauchy integrals were computed
using the rectangle rule, which is potentially the best method when
integrating an analytic and periodic function over a period, and $\mbox{d}H\left(y\right)$
was calculated with a $5$-points centered method (also of order $4$).\\

\hfill{}%
\begin{tabular}{|c|c|c|}
\hline 
$n$ & computed $\alpha_{n}$ & expected $\alpha_{n}$\tabularnewline
\hline 
\hline 
$0$ & $-1\times10^{-17}-\ii\,7\times10^{-18}\begin{array}{c}
\,\\
\,
\end{array}$ & $0$\tabularnewline
\hline 
$1$ & $-4\times10^{-17}-\ii\,6,28318530\begin{array}{c}
\,\\
\,
\end{array}$ & $-2\ii\pi\simeq-\ii\,6,28318530$\tabularnewline
\hline 
$2$ & $19,73920883-\ii\,8\times10^{-9}\begin{array}{c}
\,\\
\,
\end{array}$ & $2\pi^{2}\simeq19.73920880$\tabularnewline
\hline 
$3$ & $-2\times10^{-7}-\ii\,82,68340412\begin{array}{c}
\,\\
\,
\end{array}$ & $\frac{8}{3}\ii\pi^{3}\simeq82,68340448$\tabularnewline
\hline 
4 & $1\times10^{-6}+\ii\,-389,63636503\begin{array}{c}
\,\\
\,
\end{array}$ & $-4\pi^{4}\simeq\ii\,389,63636414$\tabularnewline
\hline 
\end{tabular}\hfill{}

We can see that this method is fast and provides results with an error
of the order of $10^{-10+n}$ for the coefficient $\alpha_{n}$. This
shift in the precision is due to the fact that one must divide by
$H^{n+1}$ in Cauchy's formula and $\left|H\right|$ is of the order
of $0,1$.

\subsubsection{Second example: modulus of a non-integrable vector field}

~

This is the numerical computation we did for the orbital modulus $\varphi\left(h\right):=\sum_{n>0}\alpha_{n}h^{n}$
of
\begin{eqnarray*}
X_{y+y^{2}} & := & x^{2}\pp x+y\left(1+x\left(y+y^{2}\right)\right)\pp y\,.
\end{eqnarray*}
We know from the theory that this equation cannot be integrated by
quadrature. The following result is obtained with the same numerical
parameters as previously, keeping the $10-n$ first digits:

\hfill{}%
\begin{tabular}{|c|c|}
\hline 
$n$ & computed $\alpha_{n}$\tabularnewline
\hline 
\hline 
$1$ & $-2\ii\pi\begin{array}{c}
\,\\
\,
\end{array}$\tabularnewline
\hline 
$2$ & $-19,73920883-\ii\,6,28318531\begin{array}{c}
\,\\
\,
\end{array}$\tabularnewline
\hline 
$3$ & $59,2176264+\ii\,78,3282319\begin{array}{c}
\,\\
\,
\end{array}$\tabularnewline
\hline 
$4$ & $-295,429240+\ii\,447,039460\begin{array}{c}
\,\\
\,
\end{array}$\tabularnewline
\hline 
\end{tabular}\hfill{}

If the equation were integrable by quadrature then its modulus would
be of the form $\varphi\left(h\right)=\log\left(1-\alpha h\right)$
for some $\alpha\in\ww C$, which is not possible (provided, of course,
that the numerical errors are of the same magnitude as in the previous
example).

\subsubsection{Third example: realization of a normal form}

~

Here we compute the first terms of the normal form for the modulus
$\varphi\left(h\right)=h$ with $\mu=0$, which is the simplest non-trivial
example. This is what was computed, using a Kutta-Runge method of
order $4$ with a step of $0,001$ for the sectorial integrals. The
initial condition is $x_{0}=-5$ and the circle $\mathcal{C}\,:\, t\in\left[0,1\right]\mapsto0,01\times\exp\left(2\ii\pi t\right)$
has been discretized by $5000$ points. Cauchy integrals were computed
using the rectangle rule. Only the $14-2n$ first digits were kept.

The modulus of this normal form has been conversely evaluated as $h+\sum_{n=2}^{5}\varepsilon_{n}h^{n}+o\left(h^{5}\right)$
with $\left|\varepsilon_{n}\right|<10^{-9}$.

\hfill{}%
\begin{tabular}{|c|c|}
\hline 
\multicolumn{1}{|c||}{$n$} & computed $R_{n}$\tabularnewline
\hline 
\hline 
$1$ & $\ii\,0,159154943092\begin{array}{c}
\,\\
\,
\end{array}$\tabularnewline
\hline 
$2$ & $-\ii\,0,0397887357\begin{array}{c}
\,\\
\,
\end{array}$\tabularnewline
\hline 
$3$ & $-2,27086\times10^{-3}+\ii\,1,473657\times10^{-2}\begin{array}{c}
\,\\
\,
\end{array}$\tabularnewline
\hline 
$4$ & $2,223\times10^{-3}-\ii\,6,239\times10^{-3}\begin{array}{c}
\,\\
\,
\end{array}$\tabularnewline
\hline 
$5$ & $-1,7\times10^{-3}+\ii\,2,8\times10^{-3}\begin{array}{c}
\,\\
\,
\end{array}$\tabularnewline
\hline 
\end{tabular}\hfill{}

\bibliographystyle{plain}
\bibliography{biblio}

\end{document}